\newtheorem{teor}{Theorem}[section]
\newtheorem*{teor*}{Theorem}
\newtheorem{lemma}[teor]{Lemma}
\newtheorem{prop}[teor]{Proposition}
\newtheorem{corol}[teor]{Corollary}
\theoremstyle{definition}
\newtheorem{defin}[teor]{Definition}
\theoremstyle{remark}
\newtheorem{rmk}[teor]{Remark}
\newtheorem*{ex}{Example}
\newcommand{\Z}{\mathbb{Z}}
\newcommand{\N}{\mathbb{N}}
\newcommand{\C}{\mathbb{C}}
\newcommand{\End}{\mathrm{End}}
\newcommand{\Hom}{\mathrm{Hom}}
\newcommand{\ind}{\mathrm{ind}}
\newcommand{\inv}{\mathrm{inv}}
\newcommand{\ent}{\mathcal{O}}
\newcommand{\bs}{\backslash}
\newcommand{\frack}{\mathfrak{k}}
\newcommand{\f}{\mathbf{f}}
\numberwithin{equation}{section}
\newcommand{\keywords}[1]{\noindent \textbf{Keywords:}\quad #1}
\newcommand{\msc}[1]{\textbf{2010 Mathematics Subject Classification:}\quad #1}
\newcommand\blfootnote[1]{\begingroup\renewcommand\thefootnote{}\footnote{#1}\addtocounter{footnote}{-1}\endgroup}
\begin{document}
\title{Hecke algebra with respect to the pro-$p$-radical of a maximal compact open subgroup for $GL(n,F)$ and its inner forms}
\author{Gianmarco Chinello\thanks{Electronic address: \texttt{gianmarco.chinello[at]unimib.it}}}
\affil{Università degli Studi di Milano-Bicocca \\Dipartimento di Matematica e Applicazioni \\via Cozzi 55, 20125 Milano (Italy)}
\date{}

\maketitle

\begin{abstract}
\small\noindent 
Let $G$ be a direct product of inner forms of general linear groups over non-archimedean locally compact fields of residue characteristic $p$
and let $K^1$ be the pro-$p$-radical of a maximal compact open subgroup of $G$.
In this paper we describe the (intertwining) Hecke algebra $\mathscr{H}(G,K^1)$, that is the convolution 
$\Z$-algebra of functions from $G$ to $\Z$ that are bi-invariant for $K^1$ and whose supports are a finite union of $K^1$-double cosets. 
We produce a presentation by generators and relations of this algebra. 
Finally we prove that the level-$0$ subcategory of the category of smooth representations of $G$ over a unitary commutative ring $R$ such that $p\in R^{\times}$ is equivalent to the category of modules over $\mathscr{H}(G,K^1)\otimes_\Z R$.
\end{abstract}

\blfootnote{\msc{20C08}}
\blfootnote{\keywords{Hecke algebras, Presentation by generators and relations, Modular representations of p-adic reductive groups, Level 0 representations, inner forms of $p$-adic general linear groups.}}

\section*{Introduction}
Let $r$ be a positive integer and $p$ be a prime number. 
For every $i\in\{1, \dots,r \}$, let $F_i$ be a non-archimedean locally compact field of residue characteristic $p$, $D_i$ be a central division algebra of finite dimension over $F_i$ whose reduced degree is denoted by $d_i$ and $m_i$ be a  positive integer. 
We consider the group $G=\prod_{i=1}^rGL_{m_i}(D_i)$ which is an inner form of $\prod_{i=1}^rGL_{m_id_i}(F_i)$. 
Let $K^1$ be the pro-$p$-radical of a maximal compact open subgroup of $G$. 

\smallskip
The main purpose of this paper is to describe the (intertwining) Hecke algebra $\mathscr{H}(G,K^1)$, that is the convolution $\Z$-algebra of functions $\Phi:G\longrightarrow \Z$ such that $\Phi(kgk')=\Phi(g)$ for every $k,k'\in K^1$ and $g\in G$ and whose supports are a finite union of $K^1$-double cosets. 
For $r=1$, this result is given  by definition \ref{defA}, where we define an algebra abstractly by generators and relations, and by corollary \ref{isomalgebre}, where we prove that this algebra is isomorphic to $\mathscr{H}(G,K^1)$. 
For $r>1$, the result follow by remark \ref{rmkprodottodiretto}.

\smallskip
We have worked in this generality, considering $\Z$ as the base ring, because for every unitary commutative ring $R$ the $R$-algebra $\mathscr{H}_{R}(G,K^1)$ of functions $\Phi:G\longrightarrow R$ satisfying the conditions above, is isomorphic to the algebra $\mathscr{H}(G,K^1)\otimes_\Z R$. 
In this way, we obtain a description of $\mathscr{H}_{R}(G,K^1)$ for every ring and in particular for every field; actually this description is also new for the $\C$-algebra $\mathscr{H}_{\C}(G,K^1)$.

\smallskip
Let now $r=1$ and $\ell$ be a prime number different from $p$. 
This paper is a first step in the attempt to describe blocks (indecomposable direct summands) in the Bernstein decomposition of the category 
$\mathscr{R}_R(G)$ of smooth $\ell$-modular representations of $G$ (see \cite{SeSt1} or \cite{Vig1} for the split case), i.e. representations of $G$ over an algebraically closed field $R$ of positive characteristic $\ell$.
In the case of complex representations, Bernstein \cite{Bern} found a block decomposition of $\mathscr{R}_\C(G)$ and in \cite{SeSt3} (or \cite{BK3} for the split case) it is proved that each block is Morita equivalent to a tensor product of algebras of type A. 
These algebras are related with the Iwahori-Hecke algebra $\mathscr{H}_{\C}(G,I)$ where $I$ is an Iwahori subgroup of $G$. 
In the case of $\ell$-modular representations, this construction of Morita equivalences does not hold and one of the problems that occurs is that the pro-order of $I$ can be divisible by $\ell$. 
Some partial results on descriptions of these algebras, which are Morita equivalent to blocks of $\mathscr{R}_R(GL_n(F))$, are given by Dat 
\cite{Dat3}, Helm \cite{Helm} and Guiraud \cite{Gui}.

\smallskip
The idea that justifies this paper is the following (some results which follow are contained in the first chapter of the Phd thesis \cite{Chin} of the author and they will be part of further papers). 
We replace $I$ by $K^1$, a pro-$p$-group, which has an invertible pro-order modulo $\ell$. 
After that we consider the level-$0$ subcategory $\mathscr{R}^0_R(G)$ of $\mathscr{R}_R(G)$ that is the smallest full subcategory which contains all irreducible representations of $G$ that admit non-zero $K^1$-invariant vectors. 
Thus $\mathscr{R}^0_R(G)$ results in a direct sum of blocks, called level-$0$ blocks, and in the last part of this paper (see corollary \ref{corolequivalenza}) we prove that it is Morita equivalent to $\mathscr{H}_R(G,K^1)$. 
Hence, in order to describe each level-$0$ block of $\mathscr{R}_R(G)$, it is sufficient to find the set $\mathscr{E}$ of primitive central idempotents of this algebra (see 2.5 and 2.6 of \cite{Chin}) and describe $e\mathscr{H}_R(G,K^1)$ for every $e\in \mathscr{E}$. 

\smallskip
Furthermore, \cite{Chin} contains a technique to describe all blocks of $\mathscr{R}_R(G)$, also those of positive level, using the description of the Hecke algebra that we give in this paper. 
We know (see \cite{BK2}) that in complex case at every block of $\mathscr{R}_\C(G)$ we can associate a pair $(J,\lambda)$, called a type, where 
$J$ is a compact open subgroup of $G$ and $\lambda$ is an irreducible representation of $J$, such that the block is Morita equivalent to the $\C$-algebra $\mathscr{H}_\C(G,\lambda)$ of endomorphisms of the compactly induced representation $\ind_J^G(\lambda)$.
As said before, in \cite{SeSt3} it is proved that this algebra is isomorphic to a tensor product of certain Iwahori-Hecke algebras. 
In the case of $\ell$-modular representations, this construction of Morita equivalences does not hold and, as in the level-$0$ case, one of the problems that occurs is that the pro-order of $J$ can be divisible by $\ell$. 
So the idea is the following: using the theory of semisimple supertypes (see \cite{MS, SeSt1}) we can take a pair $(J^1,\eta)$ where 
$J^1$ is a compact open pro-$p$-subgroup of $G$ and $\eta$ is an irreducible representation of $J^1$. 
Thus we can consider the direct sum of blocks of $\mathscr{R}_R(G)$ associated to this pair and, similarly to the level-$0$ case, we can easily prove that this direct sum is Morita equivalent to the algebra $\mathscr{H}_R(G,\eta)=\End_G(\ind_{J^1}^G(\eta))$ (see 4.1 of \cite{Chin}). 
Moreover, we can associate to $(J^1,\eta)$ a group $G'$, which is of the same type of $G$ (but in general with $r>1$), and the 
pro-$p$-radical $K'^1$ of a maximal compact open subgroup of $G'$. 
Now, thanks to the explicit presentation by generators and relations of $\mathscr{H}_R(G',K'^1)$ presented in this paper, 
in order to construct a homomorphism between $\mathscr{H}_R(G',K'^1)$ and $\mathscr{H}_R(G,\eta)$ we need only look for elements in 
$\mathscr{H}_R(G,\eta)$ satisfying all relations defining $\mathscr{H}_R(G',K'^1)$ (see 3.4 of \cite{Chin}). 
Finally, using some properties of $\eta$, it is easy to prove that this homomorphism must be an isomorphism and so we have an equivalence of categories between any block of $\mathscr{R}_R(G)$ with a certain level-$0$ block of $\mathscr{R}_R(G')$, obtaining a complete description of 
$\mathscr{R}_R(G)$. 

\smallskip
We now give a brief summary of the contents of each section of this paper. 
In section 1 we present general results on the intertwining Hecke algebras for a generic group.  
In section 2 we introduce the algebra $\mathscr{H}(G,K^1)$: first we reduce its description to the case when $r=1$, then we choose a list of generators (proposition \ref{propgeneratori}), we find some relations among these elements (definition \ref{defA}) and finally we prove that they give rise to a presentation of $\mathscr{H}(G,K^1)$ by generators and relations (corollary \ref{isomalgebre}). 
In section 3 we prove that the level-$0$ subcategory of $\mathscr{R}_R(G)$, where $R$ is a unitary commutative ring such that 
$p\in R^{\times}$, is equivalent to the category of right modules over $\mathscr{H}_R(G,K^1)$.

\subsection*{Notations}
Throughout this paper we use the following notations: 
we denote by $\N$ the set of natural numbers and by $\N^*$ the set of (strictly) positive integers.
If $X$ is a finite set we denote its cardinality by $|X|\in\N$.
We write $\bigsqcup$ for a disjoint union.
If $A$ is a unitary ring and $n\in\N^*$ we denote by $\mathbb{I}_n$ the identity matrix with coefficients in $A$ and if
$a_1,\dots,a_{m}\in A$ we denote by $\mathrm{diag}(a_1,\dots,a_{m})$ the diagonal matrix with diagonal entries $a_1,\dots,a_{m}$.

\section{Hecke algebras for a generic group}\label{sezionealgebratriviale}
This section is written in much more generality than the remainder of this paper. 
We present general results on Hecke algebras for a generic multiplicative group.
All results are contained in first chapter of \cite{Krieg}. 

\smallskip
Let $\mathtt{G}$ be a multiplicative group and let $\mathtt{H}$ be a subgroup of $\mathtt{G}$ such that every $\mathtt{H}$-double coset is a finite union of left $\mathtt{H}$-cosets 
(or equivalently $\mathtt{H}\cap g\mathtt{H}g^{-1}$ is of finite index in $\mathtt{H}$ for every $g\in \mathtt{G}$). 
Such a pair $(\mathtt{G},\mathtt{H})$ is called \emph{Hecke pair} and we can associate to it the following algebra. 

\begin{defin}
Let $\mathscr{H}(\mathtt{G},\mathtt{H})$ be the $\Z$-algebra of functions $\Phi:\mathtt{G}\longrightarrow \Z$ such that 
$\Phi(hgh')=\Phi(g)$ for every $h,h'\in \mathtt{H}$ and $g\in \mathtt{G}$ and whose supports are a finite union of $\mathtt{H}$-double cosets, 
endowed with convolution product
\begin{equation}\label{prodconvoluzione}
(\Phi_1*\Phi_2)(g)=\sum_{x}\Phi_1(x)\Phi_2(x^{-1}g)
\end{equation}
where $x$ describes a system of representatives of  $\mathtt{G}/\mathtt{H}$ in $\mathtt{G}$. 
This algebra is unitary and the identity element is the characteristic function of $\mathtt{H}$.
\end{defin}

We remark that the sum in (\ref{prodconvoluzione}) is finite since the support of $\Phi_1$ is a finite union of $\mathtt{H}$-double cosets and 
by hypothesis, every $\mathtt{H}$-double coset is a finite union of left $\mathtt{H}$-cosets. 
Moreover (\ref{prodconvoluzione}) is well defined because for every $h\in\mathtt{H}$ and $x,g\in\mathtt{G}$ we have
$\Phi_1(xh)\Phi_2((xh)^{-1}g)=\Phi_1(x)\Phi_2(x^{-1}g)$.

\begin{rmk}\label{remR}
If $R$ is any unitary commutative ring then there are a canonical isomorphism of algebras between 
$\mathscr{H}(\mathtt{G},\mathtt{H})\otimes_{\Z}R$ and the $R$-algebra $\mathscr{H}_R(\mathtt{G},\mathtt{H})$
of functions $\Phi:\mathtt{G}\longrightarrow R$ such that 
$\Phi(hgh')=\Phi(g)$ for every $h,h'\in \mathtt{H}$ and $g\in \mathtt{G}$ and whose support is a finite union of $\mathtt{H}$-double cosets, endowed with convolution product (\ref{prodconvoluzione}). 
\end{rmk}

For every  $x\in \mathtt{G}$ we denote by $f_x:\mathtt{G}\longrightarrow \Z$ the characteristic function of the double coset $\mathtt{H}x\mathtt{H}$ and we choose one times for all a set $\Xi$ of representatives of $\mathtt{H}$-double cosets of $\mathtt{G}$. 
Thus the set $\{f_{x}\,|\, x\in\Xi\}$ is a basis of $\mathscr{H}(\mathtt{G},\mathtt{H})$ as $\Z$-module
and every element of the algebra can be written as $\sum_{x\in\Xi}a_x f_{x}$ with $a_x\in\Z$ for every $x\in\Xi$.
To simplify notations from now on we denote $f_{x}f_{y}=f_{x}*f_{y}$ for all $x,y\in\mathtt{G}$.

\smallskip
Let $x,y\in \mathtt{G}$. 
We have 
$\mathtt{H}x\mathtt{H}y\mathtt{H}=\bigsqcup_{\xi\in \Upsilon_{xy}} \mathtt{H}\xi\mathtt{H}$
with $\Upsilon_{xy}\subset\Xi$ and for every $\xi\in \Upsilon_{xy}$ we have
$ \mathtt{H}x\mathtt{H}\cap \xi\mathtt{H}y^{-1}\mathtt{H}=\bigsqcup_{z\in Z_\xi}z\mathtt{H}$
where $Z_\xi$ is a finite subset of $\mathtt{G}$. 
By (\ref{prodconvoluzione}) we obtain that the support of $f_{x}f_{y}$ is contained in $\mathtt{H}x\mathtt{H}y\mathtt{H}$ and
 $(f_{x}f_{y})(\xi)=|Z_\xi|=|(\mathtt{H}x\mathtt{H}\cap \xi\mathtt{H}y^{-1}\mathtt{H})/\mathtt{H}|$  for every $\xi\in\Upsilon_{xy}$. 
This implies that
\begin{equation}\label{eqprodotto}
f_{x}f_{y}=\sum_{\xi\in \Upsilon_{xy}}|Z_\xi|f_\xi\,.
\end{equation}

\begin{lemma}\label{lemmaprodotto} Let $x,y\in \mathtt{G}$. 
The support of $f_{x}f_{y}$ is $\mathtt{H}x\mathtt{H}y\mathtt{H}$ and if 
$x$ or $y$ normalizes $\mathtt{H}$ then $f_{x}f_{y}=f_{xy}$.
\end{lemma}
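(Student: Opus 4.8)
The plan is to prove the two assertions of Lemma~\ref{lemmaprodotto} in turn, both as consequences of the analysis already carried out in equations~(\ref{prodconvoluzione})--(\ref{eqprodotto}).

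\textbf{Support equals $\mathtt{H}x\mathtt{H}y\mathtt{H}$.} From~(\ref{eqprodotto}) the support of $f_xf_y$ is contained in $\mathtt{H}x\mathtt{H}y\mathtt{H}$, so I only need the reverse inclusion, i.e.\ that the coefficient $|Z_\xi|=|(\mathtt{H}x\mathtt{H}\cap \xi\mathtt{H}y^{-1}\mathtt{H})/\mathtt{H}|$ is strictly positive (hence nonzero in $\Z$) for every $\xi\in\Upsilon_{xy}$. First I would observe that it suffices to show $\mathtt{H}x\mathtt{H}\cap \xi\mathtt{H}y^{-1}\mathtt{H}\neq\varnothing$: the intersection is left $\mathtt{H}$-stable, so if it is nonempty it contains at least one coset $z\mathtt{H}$ and $|Z_\xi|\geq 1$. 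To see the intersection is nonempty, I would pick, by definition of $\Upsilon_{xy}$, an element $\xi h x' h y' \cdots$—more precisely, since $\mathtt{H}\xi\mathtt{H}\subseteq \mathtt{H}x\mathtt{H}y\mathtt{H}$, write $\xi = h_1 x h_2 y h_3$ for suitable $h_i\in\mathtt{H}$; then $h_1 x h_2 \in \mathtt{H}x\mathtt{H}$ and also $h_1 x h_2 = \xi h_3^{-1} y^{-1} h_2^{-1}\cdot h_2 \in \xi\mathtt{H}y^{-1}\mathtt{H}$, so $h_1xh_2$ lies in the intersection. This gives $|Z_\xi|\geq 1$ and hence $\xi$ is in the support, proving $\mathrm{supp}(f_xf_y)=\mathtt{H}x\mathtt{H}y\mathtt{H}$.

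\textbf{The normalizing case.} Suppose $x$ normalizes $\mathtt{H}$ (the case of $y$ is symmetric, using the right-invariance built into the convolution, or by applying the anti-automorphism $\Phi\mapsto\Phi^\vee$ with $\Phi^\vee(g)=\Phi(g^{-1})$). Then $\mathtt{H}x=x\mathtt{H}$, so $\mathtt{H}x\mathtt{H}=x\mathtt{H}=\mathtt{H}x$ is a single left coset, and likewise $\mathtt{H}x\mathtt{H}y\mathtt{H}=x\mathtt{H}y\mathtt{H}=\mathtt{H}xy\mathtt{H}$. By the first part, the support of $f_xf_y$ is exactly the single double coset $\mathtt{H}xy\mathtt{H}$, so $f_xf_y=c\,f_{xy}$ for some $c\in\Z_{\geq 1}$. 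To pin down $c$, I would evaluate directly via~(\ref{prodconvoluzione}): $(f_xf_y)(xy)=\sum_{z}f_x(z)f_y(z^{-1}xy)$ over representatives $z$ of $\mathtt{G}/\mathtt{H}$; since $\mathtt{H}x\mathtt{H}=x\mathtt{H}$ is a single coset, exactly one representative $z$ (namely $z\in x\mathtt{H}$) contributes, and for that one $f_x(z)=1$ while $z^{-1}xy\in\mathtt{H}y=\mathtt{H}y\mathtt{H}$ wait---more carefully, $z=xh$ gives $z^{-1}xy = h^{-1}x^{-1}xy = h^{-1}y\in\mathtt{H}y\subseteq\mathtt{H}y\mathtt{H}$, so $f_y(z^{-1}xy)=1$. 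Hence $(f_xf_y)(xy)=1$, forcing $c=1$ and $f_xf_y=f_{xy}$.

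\textbf{Main obstacle.} There is no serious obstacle here; the lemma is essentially a bookkeeping consequence of the coset combinatorics already set up. The one point requiring a little care is making sure the counting argument for $|Z_\xi|\geq 1$ is phrased correctly—namely extracting from ``$\mathtt{H}\xi\mathtt{H}\subseteq\mathtt{H}x\mathtt{H}y\mathtt{H}$'' an explicit element of $\mathtt{H}x\mathtt{H}\cap\xi\mathtt{H}y^{-1}\mathtt{H}$—and, in the normalizing case, being careful that when $x$ normalizes $\mathtt{H}$ the coset $\mathtt{H}x\mathtt{H}$ collapses to $x\mathtt{H}$ so that only one term survives in the convolution sum. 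I would present the ``$y$ normalizes $\mathtt{H}$'' case either by the symmetric direct computation or by invoking the canonical anti-involution of $\mathscr{H}(\mathtt{G},\mathtt{H})$ sending $f_g\mapsto f_{g^{-1}}$, whichever is cleaner given what~\cite{Krieg} already provides.
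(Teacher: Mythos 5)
Your proof is correct and takes essentially the same approach as the paper: the first part extracts a witness in the intersection from the factorization $\xi = h_1 x h_2 y h_3$, and the second part exploits the collapse $\mathtt{H}x\mathtt{H} = x\mathtt{H}$ when $x$ normalizes $\mathtt{H}$. The only difference is that where the paper simply asserts $|Z_{\tilde\xi}|=1$, you verify the coefficient by a direct evaluation of the convolution at $xy$; this is a more explicit but equivalent way of making the same counting argument, and your remark about the anti-involution for the $y$-normalizing case is a clean alternative to the paper's implicit symmetry.
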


\begin{proof}
Let $\xi\in\Upsilon_{xy}$. 
In order to prove the first assertion we have to prove that $|Z_\xi|>0$. 
We have $\xi=h_1xh_2yh_3$ with $h_1,h_2,h_3\in \mathtt{H}$ and then 
$h_1x\mathtt{H}\subset \mathtt{H}x\mathtt{H}\cap \xi\mathtt{H}y^{-1}\mathtt{H}$ which implies $|Z_\xi|>0$. 
Now, if $x$ or $y$ normalizes $\mathtt{H}$ then $\mathtt{H}x\mathtt{H}y\mathtt{H}=\mathtt{H}xy\mathtt{H}$, $\Upsilon_{xy}=\{\tilde \xi\}$ with 
$\mathtt{H}\tilde \xi \mathtt{H}=\mathtt{H}xy\mathtt{H}$ and $|Z_{\tilde \xi}|=1$. 
Hence we obtain $f_{x}f_{y}=f_{\tilde \xi}=f_{xy}$.
\end{proof}

\begin{rmk}\label{rmkconiugato}
Let $\mathtt{K}$ be any subgroup of $\mathtt{G}$ containing $\mathtt{H}$ and let $g\in \mathtt{G}$ such that 
$\mathtt{K}^g=g^{-1}\mathtt{K}g=\mathtt{K}$. 
Then $(\mathtt{K},\mathtt{H})$ and $(\mathtt{K},\mathtt{H}^g)$ are Hecke pairs and applying theorem 6.1 of \cite{Krieg} with 
$\varphi:\mathtt{K}\rightarrow \mathtt{K}$ given by $k\mapsto g^{-1}kg$ for every $k\in\mathtt{K}$, we obtain an isomorphism of algebras 
$\mathscr{H}(\mathtt{K},\mathtt{H})\cong\mathscr{H}(\mathtt{K},\mathtt{H}^g)$.
\end{rmk}

\begin{rmk}\label{rmkprodottodiretto}
Let $r\in\N^*$ and let $(\mathtt{G}_i,\mathtt{H}_i)$ be an Hecke pair for every $i\in\{1,\dots,r\}$. 
Then $(\mathtt{G}_1\times\cdots\times \mathtt{G}_r,\mathtt{H}_1\times\cdots\times \mathtt{H}_r)$ is an Hecke pair and 
the algebra $\mathscr{H}(\mathtt{G}_1\times\cdots\times \mathtt{G}_r,\mathtt{H}_1\times\cdots\times \mathtt{H}_r)$ 
is isomorphic to the tensor product
$\mathscr{H}(\mathtt{G}_1,\mathtt{H}_1)\otimes\cdots\otimes\mathscr{H}(\mathtt{G}_r,\mathtt{H}_r)$ (see theorem 6.3 of \cite{Krieg}).
\end{rmk}

\section{The algebra $\mathscr{H}(G,K^1)$}\label{sectionB} 
In this section we focus the study of the algebra $\mathscr{H}(\mathtt{G},\mathtt{H})$ in the case where $\mathtt{G}$ is a direct product of inner forms of general linear groups over non-archimedean locally compact fields.

\smallskip
Let $r\in \N^*$ and $p$ be a prime number. 
For every $i\in\{1, \dots, r\}$, let $F_i$ be a non-archimedean locally compact field of residue characteristic $p$ and let $D_i$ be a central division algebra of finite dimension over $F_i$ whose reduced degree is $d_i\in\N$. 
Given $m_i\in\N^*$ for every $i\in\{1, \dots, r\}$, we denote $G_i=GL_{m_i}(D_i)$ which is an inner form of $GL_{m_id_i}(F_i)$.
Let $K_i$ be a maximal compact open subgroup of $G_i$ and let $K_i^1$ be the pro-$p$-radical of $K_i$. 
We remark that every $K_i^1$-double coset of $G_i$ is a finite union of left $K_i^1$-cosets and so we can use notations of section 
\ref{sezionealgebratriviale} with $\mathtt{G}=G_i$ and $\mathtt{H}=K_i^1$. 
In this section we want to study the algebra $\mathscr{H}(G_1\times\cdots\times G_r,K_1^1\times\cdots\times K_r^1)$. 
Thanks to remark \ref{rmkprodottodiretto} this algebra is isomorphic to the tensor product 
$\mathscr{H}(G_1,K_1^1)\otimes\cdots\otimes\mathscr{H}(G_r,K_r^1)$ and so we can reduce to study the case when $r=1$.
So, from now on in this section, we consider $r=1$ and we denote $F=F_1$, $D=D_1$, $m=m_1$, $G=G_1=GL_m(D)$, $K=K_1$ and $K^1=K_1^1$.

\smallskip
We denote by $\ent_{D}$ the ring of integers of $D$, by $\mathbb{\varpi}$ a uniformizer of $\ent_{D}$, by $\wp_{D}=\mathbf{\varpi}\ent_{D}$ its prime ideal and by $\frack_{D}=\ent_{D}/\wp_{D}$ the residue field of 
$D$ whose cardinality is denoted by $q\in \N^*$. 
Since $K^1$ is conjugate to the first congruence subgroup of $GL_m(\ent_D)$, by remark \ref{rmkconiugato} we can choose 
$K=GL_m(\ent_D)$ and $K^1=\mathbb{I}_m+M_m(\wp_{D})$. 
This is what we assume from now on.

\smallskip
We recall that the identity element of $\mathscr{H}(G,K^1)$ is the characteristic function of $K^1$. 
As in section 
\ref{sezionealgebratriviale} we denote by $f_x$ the characteristic function of the double coset $K^1 xK^1$ for every  $x\in G$ and we choose  a set of representatives $\Xi$ of $K^1$-double cosets of $G$. 
We recall that for every $x,y\in G$ the support of $f_{x}*f_{y}=f_{x}f_{y}$ is $K^1xK^1yK^1=\bigsqcup_{\xi\in \Upsilon_{xy}} K^1yK^1$ with $\Upsilon_{xy}\subset\Xi$. 
Furthermore, we have 
\begin{equation}\label{eqprodotto2}
f_{x}f_{y}=\sum_{\xi\in \Upsilon_{xy}}\big|(K^1xK^1\cap \xi K^1y^{-1}K^1)/K^1\big|f_\xi
\end{equation}
and if $x$ or $y$ normalizes $K^1$ then $f_{x}f_{y}=f_{xy}$.

\subsection{Root system}\label{sezioneradici}
In this paragraph we introduce the root system of a general linear group. 
The classical reference for general case is chapter VI of \cite{Bourb1} (see also \cite{Morris}).

\smallskip
We denote by $\mathbf{\Phi}=\{\alpha_{ij}\;|\;1\leq i\neq j\leq m\}$ the set of roots of $\mathbf{GL}_m$ relative to torus of diagonal matrices.
We consider the set of positive roots $\mathbf{\Phi}^+=\{\alpha_{ij}\;|\;1\leq i<j\leq m\}$, the set of negative roots
$\mathbf{\Phi}^-=-\mathbf{\Phi}^+=\{\alpha_{ij}\;|\;1\leq j<i\leq m\}$ and the set of simple roots 
$\Sigma=\{\alpha_{i,i+1}\;|\;1\leq i\leq m-1\}$ relative to Borel subgroup of upper triangular matrices. 

\smallskip
For every $\alpha=\alpha_{i,i+1}\in\Sigma$ we write $s_{\alpha}=s_i$ for the transposition $(i,i+1)$. 
Let $W$ be the group generated by the $s_i$ which is the group of permutations of $m$ elements and so the Weyl group of $\mathbf{GL}_m$.
Let $\ell:W\rightarrow \N$ be the length function of $W$ relative to $s_1,\dots,s_{m-1}$, defined by
\[\ell(w) = \min\big\{n\in\N\,|\, w=s_{\alpha_1}\cdots s_{\alpha_n} \text{ with } \alpha_j\in\Sigma\big\}\]
for every $w\in W$. 
Then we have $\ell(ws_\alpha)-\ell(w)\in\{1,-1\}$ for every $w\in W$ and $\alpha\in\Sigma$.

\smallskip
The group $W$ acts on $\bm\Phi$ by $w\alpha_{ij}=\alpha_{w(i)w(j)}$. 
We denote 
\[N(w)=\bm\Phi^+\cap w^{-1}\bm\Phi^-=\{\alpha\in\bm\Phi^+\,|\, w\alpha\in\bm\Phi^-\}\] 
for every $w\in W$ and we remark that
$N(s_\alpha)=\{\alpha\}$ for every $\alpha\in\Sigma$.
\begin{lemma}\label{radici1}
For every $w\in W$ and $\alpha\in\Sigma$ we have
$\displaystyle{	
	|N(ws_\alpha)|=\left\{
	\begin{array}{ll}
	|N(w)|+1 &\text{ if } w\alpha\in\bm\Phi^+\\
	|N(w)|-1 &\text{ if } w\alpha\in\bm\Phi^-.
	\end{array}
	\right.}$
\end{lemma}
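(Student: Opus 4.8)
The plan is to derive everything from the single structural fact that a simple reflection sends its own root to minus itself and permutes the remaining positive roots, i.e. that $s_\alpha\bm\Phi^+=(\bm\Phi^+\setminus\{\alpha\})\cup\{-\alpha\}$ for $\alpha\in\Sigma$, this union being disjoint. For $\mathbf{GL}_m$ this needs no machinery: writing $\alpha=\alpha_{i,i+1}$, so that $s_\alpha=s_i$ is the transposition $(i,i+1)$ acting by $s_i\alpha_{jk}=\alpha_{s_i(j)\,s_i(k)}$, one runs through the cases for $1\le j<k\le m$ and checks that $s_i$ keeps $\alpha_{jk}$ of the form $\alpha_{j'k'}$ with $j'<k'$ unless $\{j,k\}=\{i,i+1\}$, in which case $s_i\alpha_{i,i+1}=\alpha_{i+1,i}=-\alpha_{i,i+1}$; since $s_i$ is an involution on $\bm\Phi$, it follows that it restricts to a bijection of $\bm\Phi^+\setminus\{\alpha\}$ onto itself.

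Next I would move the reflection inside $N$. Because $s_\alpha$ is a bijection of $\bm\Phi$ with $s_\alpha^{-1}=s_\alpha$, and $(ws_\alpha)^{-1}=s_\alpha w^{-1}$, applying $s_\alpha$ to
\[
N(ws_\alpha)=\bm\Phi^+\cap (ws_\alpha)^{-1}\bm\Phi^-=\bm\Phi^+\cap s_\alpha w^{-1}\bm\Phi^-
\]
gives
\[
s_\alpha N(ws_\alpha)=s_\alpha\bm\Phi^+\cap w^{-1}\bm\Phi^-=\big((\bm\Phi^+\setminus\{\alpha\})\cap w^{-1}\bm\Phi^-\big)\,\sqcup\,\big(\{-\alpha\}\cap w^{-1}\bm\Phi^-\big).
\]
Since $|N(ws_\alpha)|=|s_\alpha N(ws_\alpha)|$, the count splits as the size of the first piece plus the indicator of the condition $-\alpha\in w^{-1}\bm\Phi^-$.

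Then comes the two-case analysis, keyed to the dichotomy $w\alpha\in\bm\Phi^+$ versus $w\alpha\in\bm\Phi^-$ (equivalently, $\alpha\notin N(w)$ or $\alpha\in N(w)$); since $w(-\alpha)=-w\alpha$, exactly one of $\alpha,-\alpha$ lies in $w^{-1}\bm\Phi^-$. If $w\alpha\in\bm\Phi^+$, then $\alpha\notin w^{-1}\bm\Phi^-$, so $(\bm\Phi^+\setminus\{\alpha\})\cap w^{-1}\bm\Phi^-=\bm\Phi^+\cap w^{-1}\bm\Phi^-=N(w)$, while $-\alpha\in w^{-1}\bm\Phi^-$ contributes $1$; as $-\alpha\notin\bm\Phi^+\supseteq N(w)$ the union is disjoint and $|N(ws_\alpha)|=|N(w)|+1$. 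If $w\alpha\in\bm\Phi^-$, then $\alpha\in N(w)$ and $-\alpha\notin w^{-1}\bm\Phi^-$, so the second piece is empty and $(\bm\Phi^+\setminus\{\alpha\})\cap w^{-1}\bm\Phi^-=N(w)\setminus\{\alpha\}$, whence $|N(ws_\alpha)|=|N(w)|-1$.

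I do not anticipate a real obstacle: the only point demanding care is tracking which of $\alpha$ and $-\alpha$ sits in $w^{-1}\bm\Phi^-$, and the one structural ingredient — that $s_\alpha$ permutes $\bm\Phi^+\setminus\{\alpha\}$ — is elementary in type $A$ (cf. Chapter VI of \cite{Bourb1}). The reason for recording the lemma is that, together with $N(s_\alpha)=\{\alpha\}$, it feeds the inductive length computations used later in the paper.
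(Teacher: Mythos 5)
Your proof is correct and rests on the same structural fact the paper uses (that $s_\alpha$ fixes $\bm\Phi^+\setminus\{\alpha\}$ and sends $\alpha$ to $-\alpha$); indeed, applying $s_\alpha$ to the paper's identities $N(ws_\alpha)=s_\alpha N(w)\cup\{\alpha\}$ (when $w\alpha\in\bm\Phi^+$) and $N(ws_\alpha)=s_\alpha(N(w)\setminus\{\alpha\})$ (when $w\alpha\in\bm\Phi^-$) recovers exactly your computation of $s_\alpha N(ws_\alpha)$. The only difference is presentational: the paper states the set identities for $N(ws_\alpha)$ directly and tersely, whereas you conjugate by $s_\alpha$ first and spell out the supporting steps, which makes the argument more self-contained.
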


\begin{proof}
In the case $w\alpha\in\bm\Phi^+$ we have $N(ws_{\alpha})=s_\alpha N(w)\cup \{\alpha\}$. 
Otherwise if $w\alpha\in\bm\Phi^-$ we obtain 
$N(ws_{\alpha})=s_\alpha(N(w)\setminus\{\alpha\})$.
\end{proof}

\begin{prop}\label{propradici}
For every $w\in W$ we have $\ell(w)=|N(w)|$.
\end{prop}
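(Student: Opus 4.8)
The plan is to prove $\ell(w) = |N(w)|$ by induction on $\ell(w)$, using Lemma \ref{radici1} as the inductive engine.

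\textbf{Base case.} If $\ell(w) = 0$ then $w = 1$, the identity permutation. Then $w\alpha = \alpha \in \bm\Phi^+$ for every $\alpha \in \bm\Phi^+$, so $N(1) = \bm\Phi^+ \cap \bm\Phi^- = \emptyset$ and $|N(1)| = 0 = \ell(1)$.

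\textbf{Inductive step.} Suppose the equality $\ell(w') = |N(w')|$ holds for all $w' \in W$ with $\ell(w') < n$, and let $w \in W$ with $\ell(w) = n \geq 1$. Write a reduced expression $w = s_{\alpha_1} \cdots s_{\alpha_n}$ and set $v = s_{\alpha_1} \cdots s_{\alpha_{n-1}}$, so that $w = v s_{\alpha}$ with $\alpha = \alpha_n \in \Sigma$ and $\ell(v) = n - 1$. By the inductive hypothesis, $|N(v)| = n - 1$. Since $\ell(w) = \ell(vs_\alpha) = \ell(v) + 1$, a standard property of the length function for reflections forces $v\alpha \in \bm\Phi^+$: indeed, if one had $v\alpha \in \bm\Phi^-$, then Lemma \ref{radici1} would give $|N(vs_\alpha)| = |N(v)| - 1 = n - 2$, and applying the inductive hypothesis in the other direction (or the already-established inequality $\ell(w) \leq |N(w)|$, see below) would contradict $\ell(w) = n$. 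Granting $v\alpha \in \bm\Phi^+$, Lemma \ref{radici1} yields $|N(w)| = |N(vs_\alpha)| = |N(v)| + 1 = (n-1) + 1 = n = \ell(w)$, completing the induction.

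\textbf{On the subtle direction.} The delicate point is ruling out $v\alpha \in \bm\Phi^-$ in the inductive step: a priori the reduced word gives only $\ell(vs_\alpha) = \ell(v)+1$, and one must convert this into a statement about $N$. I would handle this cleanly by first proving the easy inequality $\ell(w) \leq |N(w)|$ for all $w$ — by a separate induction on $\ell(w)$, writing $w = vs_\alpha$ with $\ell(v) = \ell(w) - 1$ and noting that Lemma \ref{radici1} gives $|N(w)| \geq |N(v)| - 1 \geq (\ell(w)-1) - 1$, which is too weak; so instead one uses that $|N(w)| = |N(vs_\alpha)| \in \{|N(v)| \pm 1\} \geq \ell(v) - 1$, still too weak. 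The clean route is: in the inductive step, if $v\alpha \in \bm\Phi^-$ then $|N(w)| = |N(v)| - 1 = n-2$, and then $w$ itself (having $n-2 = |N(w)|$ roots) would by the inductive hypothesis applied to expressions of shorter length... Rather than belabor this, the cleanest argument is the classical one: show by induction on $\ell(w)$ that any reduced word $s_{\alpha_1}\cdots s_{\alpha_n}$ for $w$ satisfies $N(w) = \{\beta_n, s_{\alpha_n}\beta_{n-1}, \ldots, s_{\alpha_n}\cdots s_{\alpha_2}\beta_1\}$ where $\beta_j = \alpha_j$, these $n$ roots being distinct — this follows by unwinding Lemma \ref{radici1} repeatedly, each step either adding a new positive root or the geometry forcing the "add" case because the word is reduced. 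The distinctness of the listed roots, hence $|N(w)| = n$, is then immediate. This last point — that the roots enumerated by iterating Lemma \ref{radici1} along a reduced word are pairwise distinct — is the real content and the main obstacle, but it drops out of the fact that at each stage the set grows by exactly one element (the "$+1$" case of Lemma \ref{radici1}), which is guaranteed precisely because the word is reduced.
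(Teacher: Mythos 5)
Your proof has a genuine gap, and it is precisely at the point you yourself flag as delicate: in the inductive step you must rule out $v\alpha \in \bm\Phi^-$ when $w = vs_\alpha$ is a reduced decomposition with $\ell(w) = n$ and $\ell(v) = n-1$. Your first attempt invokes "the already-established inequality $\ell(w) \leq |N(w)|$," but this inequality is not established, and your inductive hypothesis (indexed by length $<n$) cannot be applied to $w$ itself. Your second attempt — proving $\ell(w) \leq |N(w)|$ by a separate induction on $\ell(w)$ — you correctly determine does not close. Your final attempt, enumerating $N(w)$ along a reduced word, ends circularly: you assert that the "$+1$" case of Lemma \ref{radici1} occurs at each step "precisely because the word is reduced," but to know that Lemma \ref{radici1} puts you in the "$+1$" case at step $j$ you must know $(s_{\alpha_1}\cdots s_{\alpha_{j-1}})\alpha_j \in \bm\Phi^+$, which is exactly the implication $\ell(vs_\alpha) > \ell(v) \Rightarrow v\alpha \in \bm\Phi^+$ you are trying to prove (equation (\ref{radici2}) in the paper, which is a \emph{consequence} of the proposition, not available beforehand). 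Reducedness alone only gives you information about $\ell$, and converting that into information about $N$ is the whole content of the statement.

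The paper closes this gap by a different choice of induction variable for the hard direction. It proves $|N(w)| \leq \ell(w)$ by induction on $\ell(w)$ exactly as you do (this direction needs only the inequality $|N(ws_\alpha)| \leq |N(w)| + 1$ from Lemma \ref{radici1}, so no case analysis is required), but proves $\ell(w) \leq |N(w)|$ by induction on $|N(w)|$ rather than on $\ell(w)$. Concretely: if $|N(w)| = n+1 > 0$ then $w \neq 1$, so there exists $\alpha \in \Sigma$ with $w\alpha \in \bm\Phi^-$ (otherwise $w(1) < \cdots < w(m)$ forces $w = 1$); Lemma \ref{radici1} then gives $|N(ws_\alpha)| = n$, the inductive hypothesis applies to $ws_\alpha$, and $\ell(w) \leq \ell(ws_\alpha) + 1 \leq |N(ws_\alpha)| + 1 = |N(w)|$. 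The point is that here the case of Lemma \ref{radici1} is \emph{chosen} by selecting $\alpha$ with $w\alpha \in \bm\Phi^-$, rather than \emph{deduced} from reducedness of a word — which is exactly what your argument was unable to do.
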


\begin{proof} Note that $|N(1)|=\ell(1)=0$. 
We prove $N(w)\leq \ell(w)$ by induction on length of $w\in W$. 
Let $w,w'\in W$, $n\in\N$ and $\alpha\in\Sigma$ be such that $\ell(w)=n+1$, $\ell(w')=n$ and $w=w's_\alpha$. 
By induction hypothesis and by lemma \ref{radici1} we have 
$|N(w)|=|N(w's_\alpha)|\leq|N(w')|+1 \leq \ell(w')+1=\ell(w)$.
We prove $\ell(w)\leq N(w)$ by induction on $|N(w)|$. 
Let $w\neq 1$ be in $W$ and $n\in \N$ be such that $|N(w)|=n+1$. 
By lemma \ref{radici1} there exists $\alpha\in\Sigma$ such that $|N(ws_{\alpha})|=n$ otherwise $w\alpha'\in\bm\Phi^+$ for every 
$\alpha'\in\Sigma$ that implies $w(1)<w(2)<\cdots <w(m-1)$ and so $w=1$. 
By induction hypothesis we obtain 
$\ell(w)\leq\ell(ws_\alpha)+1\leq |N(ws_{\alpha})|+1=|N(w)|$.
\end{proof}

\noindent
Putting together lemma \ref{radici1} with proposition \ref{propradici}, we obtain 
\begin{equation}\label{radici2}
\ell(ws_\alpha)=\left\{
\begin{array}{ll}
\ell(w)+1 &\text{ if } w\alpha\in\bm\Phi^+\\
\ell(w)-1 &\text{ if } w\alpha\in\bm\Phi^-
\end{array}
\right.
\end{equation}
for every $w\in W$ and $\alpha\in\Sigma$.

\begin{lemma}\label{radici3}
For every $w_1,w_2\in W$ we have $N(w_1w_2)\subset N(w_2)\sqcup w_2^{-1}N(w_1)$. 
Moreover the equality holds if and only if $N(w_2)\subset N(w_1w_2)$
and if and only if $\ell(w_1w_2)= \ell(w_1)+\ell(w_2)$.
\end{lemma}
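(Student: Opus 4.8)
The plan is to prove the two claims separately: first the containment $N(w_1w_2)\subset N(w_2)\sqcup w_2^{-1}N(w_1)$, then the chain of equivalences. For the containment, I would take $\alpha\in N(w_1w_2)$, so $\alpha\in\bm\Phi^+$ and $w_1w_2\alpha\in\bm\Phi^-$. The idea is to split according to the sign of $w_2\alpha$: if $w_2\alpha\in\bm\Phi^-$ then $\alpha\in\bm\Phi^+\cap w_2^{-1}\bm\Phi^-=N(w_2)$; if $w_2\alpha\in\bm\Phi^+$ then, since $w_1(w_2\alpha)=w_1w_2\alpha\in\bm\Phi^-$, we get $w_2\alpha\in N(w_1)$, hence $\alpha\in w_2^{-1}N(w_1)$. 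This shows $N(w_1w_2)\subset N(w_2)\cup w_2^{-1}N(w_1)$, and the two pieces are disjoint because $N(w_2)\subset\bm\Phi^+$ maps under $w_2$ into $\bm\Phi^-$ while $w_2^{-1}N(w_1)$ maps under $w_2$ into $N(w_1)\subset\bm\Phi^+$; so the union is actually a disjoint union.

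Next I would establish the first equivalence: equality holds iff $N(w_2)\subset N(w_1w_2)$. The forward direction is immediate since $N(w_2)$ is one of the two disjoint pieces. For the converse, assume $N(w_2)\subset N(w_1w_2)$; I already have $N(w_1w_2)\subset N(w_2)\sqcup w_2^{-1}N(w_1)$, so it remains to show $w_2^{-1}N(w_1)\subset N(w_1w_2)$. Take $\beta\in N(w_1)$, i.e. $\beta\in\bm\Phi^+$ and $w_1\beta\in\bm\Phi^-$; I want $w_2^{-1}\beta\in N(w_1w_2)$, i.e. $w_2^{-1}\beta\in\bm\Phi^+$ and $w_1w_2(w_2^{-1}\beta)=w_1\beta\in\bm\Phi^-$ — the second condition is automatic, so I need $w_2^{-1}\beta\in\bm\Phi^+$. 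Suppose not: then $w_2^{-1}\beta\in\bm\Phi^-$, so $-w_2^{-1}\beta\in\bm\Phi^+$ and $w_2(-w_2^{-1}\beta)=-\beta\in\bm\Phi^-$, meaning $-w_2^{-1}\beta\in N(w_2)\subset N(w_1w_2)$; but then $w_1w_2(-w_2^{-1}\beta)=-w_1\beta\in\bm\Phi^+$ (since $w_1\beta\in\bm\Phi^-$), contradicting $-w_2^{-1}\beta\in N(w_1w_2)$. Hence $w_2^{-1}\beta\in\bm\Phi^+$ as desired, giving the reverse containment and thus equality.

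Finally, the equivalence with the length condition $\ell(w_1w_2)=\ell(w_1)+\ell(w_2)$. Using proposition \ref{propradici}, $\ell(w)=|N(w)|$ for all $w\in W$, and since $|w_2^{-1}N(w_1)|=|N(w_1)|$, the containment $N(w_1w_2)\subset N(w_2)\sqcup w_2^{-1}N(w_1)$ gives $\ell(w_1w_2)\le\ell(w_1)+\ell(w_2)$ always, with equality precisely when the set containment is an equality. Combined with the first equivalence, this closes the loop.

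The main obstacle I expect is the careful sign-chasing in the reverse direction of the first equivalence — in particular being precise about the fact that $w\mapsto N(w)$ is stable under negation in the right way (i.e. that $\gamma\in N(w)$ forces $-\gamma\notin\bm\Phi^+\cap w^{-1}\bm\Phi^-$ trivially, but more relevantly that one must track both $\gamma$ and $-\gamma$ through the various $w_i$). Everything else is a routine application of proposition \ref{propradici}, but one should double-check that the disjointness argument genuinely uses $N(w_i)\subset\bm\Phi^+$ rather than merely $N(w_i)\subset\bm\Phi$.
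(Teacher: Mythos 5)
Your argument is correct and follows essentially the same route as the paper's proof: same case split on the sign of $w_2\alpha$ for the containment, same disjointness observation via the sign of $w_2\gamma$, the same sign-chasing (phrased by you as a contradiction, by the paper via set complements) for the reverse direction of the first equivalence, and the cardinality count via Proposition~\ref{propradici} for the length statement. No gaps.
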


\begin{proof}
In order to prove first assertion we fix $\alpha \in N(w_1w_2)$. 
If $w_2\alpha\in\bm\Phi^-$ then $\alpha\in N(w_2)$ and if $w_2\alpha\in\bm\Phi^+$ then $w_2\alpha\in N(w_1)$ since $w_1w_2\alpha\in\bm\Phi^-$. 
Thus we have $N(w_1w_2)\subset N(w_2)\cup w_2^{-1}N(w_1)$. 
Moreover we have $N(w_2)\cap w_2^{-1}N(w_1)\subset w_2^{-1}(\bm\Phi^-\cap N(w_1))=\emptyset$. 
To prove the second assertion, we suppose $N(w_2)\subset N(w_1w_2)$ and we take $\alpha\in w_2^{-1}N(w_1)$. 
We have $w_1w_2\alpha\in\bm\Phi^-$ and so it remains to prove $\alpha\in \bm\Phi^+$.  
Since $w_1w_2(-\alpha)\in\bm\Phi^+$ we have $-\alpha\notin N(w_1w_2)\supset N(w_2)=\bm\Phi^+\cap w_2^{-1}\bm\Phi^-$
and then $\alpha\notin \bm\Phi^-\cap w_2^{-1}\bm\Phi^+\supset \bm\Phi^-\cap w_2^{-1}N(w_1)$.  
Since we have taken $\alpha\in w_2^{-1}N(w_1)$, $\alpha$ must be a positive root. 
Third assertion follows immediately from proposition \ref{propradici}.
\end{proof}

\noindent
In particular lemma \ref{radici3} implies that $\ell(w_1w_2)\leq \ell(w_1)+\ell(w_2)$ for every $w_1,w_2\in W$.

\smallskip
Let $P\subset\Sigma$. 
We denote by $\mathbf{\Phi}_P^+$ the set of positive roots generated by $P$, 
$\mathbf{\Phi}_P^-=-\mathbf{\Phi}_P^+$, 
$\mathbf{\Phi}_P=\mathbf{\Phi}_P^+\cup\mathbf{\Phi}_P^-$, 
$\mathbf{\Psi}_P^+=\mathbf{\Phi}^+\setminus\mathbf{\Phi}_P^+$ and
$\mathbf{\Psi}_P^-=-\mathbf{\Psi}_P^+$.
We denote by $W_P$ the subgroup of $W$ generated by the $s_{\alpha}$ with $\alpha\in P$. 
We denote $\widehat P=\Sigma-P$, $\widehat\alpha=\widehat{\{\alpha\}}$ and we observe that  
$\bm\Phi_P^+=\bigcap_{\alpha\in \widehat{P}}\bm\Phi_{\widehat\alpha}^+$ and 
$\bm\Psi_P^+=\bigcup_{\alpha\in \widehat{P}}\bm\Psi_{\widehat\alpha}^+$. 

\begin{ex}
Let $\alpha=\alpha_{i,i+1}\in\Sigma$. 
Then $\widehat\alpha=\{\alpha_{j,j+1}\in\Sigma\,|\,j\neq i\}$ and
\begin{align*}
\bm\Phi_{\widehat\alpha}^+&=\{\alpha_{hk}\in\bm\Phi^+\,|\,1\leq h<k\leq i \text{ or }i+1\leq h<k\leq m\},\\
\bm\Psi_{\widehat\alpha}^+&=\{\alpha_{hk}\in\bm\Phi^+\,|\,1\leq h\leq i < k\leq m \}.
\end{align*}
\end{ex}

\begin{prop}\label{proplongmin}
Let $P\subset\Sigma$ and $w$ be an element of minimal length in $wW_P\in W/W_P$.  
Then $w\alpha\in\mathbf{\Phi}^+$ for every $\alpha\in \mathbf{\Phi}_P^+$. 
\end{prop}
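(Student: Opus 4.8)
The statement to prove is: if $P \subset \Sigma$ and $w$ is the element of minimal length in its coset $wW_P \in W/W_P$, then $w\alpha \in \bm\Phi^+$ for every $\alpha \in \bm\Phi_P^+$. In the language of the sets $N(\cdot)$ introduced above, this is the assertion that $N(w) \cap \bm\Phi_P^+ = \emptyset$, i.e. $w$ sends the whole positive root subsystem generated by $P$ into the positive roots. My plan is to prove the contrapositive by a length-decreasing argument: if some positive root in $\bm\Phi_P^+$ is sent by $w$ to a negative root, then we can shorten $w$ within its coset $wW_P$.

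\textbf{Key steps.} First I would reduce to simple roots in $P$. Suppose for contradiction that $N(w) \cap \bm\Phi_P^+ \neq \emptyset$. I claim this forces $w\alpha \in \bm\Phi^-$ for at least one \emph{simple} root $\alpha \in P$. Indeed, since $\bm\Phi_P^+$ is the set of positive roots spanned by $P$, any $\beta \in \bm\Phi_P^+$ can be written as a sum of simple roots from $P$; if $w$ sent every $\alpha \in P$ to a positive root, then $w$ would send every nonnegative integer combination of elements of $P$ that happens to be a root to a positive root, contradicting $w\beta \in \bm\Phi^-$. (Concretely: $w\beta$ is a sum of the $w\alpha$ for $\alpha$ in the support of $\beta$, all of which would be positive, so $w\beta$ would be positive.) Hence there is $\alpha \in P$ with $w\alpha \in \bm\Phi^-$. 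By equation~(\ref{radici2}) this gives $\ell(ws_\alpha) = \ell(w) - 1$. Since $s_\alpha \in W_P$, the element $ws_\alpha$ lies in the same coset $wW_P$ and is strictly shorter than $w$, contradicting the minimality of $\ell(w)$ in $wW_P$. Therefore $N(w) \cap \bm\Phi_P^+ = \emptyset$, which is exactly the desired conclusion $w\alpha \in \bm\Phi^+$ for all $\alpha \in \bm\Phi_P^+$.

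\textbf{Main obstacle.} The only genuinely non-formal point is the reduction from an arbitrary root in $\bm\Phi_P^+$ to a simple root of $P$, i.e. the claim that $w$ maps all of $\bm\Phi_P^+$ to positive roots as soon as it maps $P$ to positive roots. For $\mathbf{GL}_m$ this is transparent because $\bm\Phi_P$ is a product of type-$A$ subsystems supported on the blocks determined by $P$, and $w$ restricted to each such block is an order-preserving (on the relevant index set) permutation by Proposition~\ref{proplongmin}'s own hypothesis; but to keep the argument clean I would phrase it via the additivity of the $W$-action on roots: if $\beta = \sum_{\alpha \in P} c_\alpha \alpha$ with $c_\alpha \geq 0$ is a root, then $w\beta = \sum_{\alpha} c_\alpha (w\alpha)$, and a sum of positive roots with nonnegative coefficients that is itself a root must be a positive root (its coordinates in the simple-root basis are nonnegative and not all zero). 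This is a standard fact about root systems (chapter VI of \cite{Bourb1}) and I would simply cite it. Everything else is a direct application of (\ref{radici2}) and the definition of coset representative of minimal length.
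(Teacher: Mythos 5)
Your proof is correct and takes essentially the same route as the paper: both reduce to simple roots in $P$ and invoke equation~(\ref{radici2}) together with minimality of $w$ in $wW_P$, yours phrased contrapositively. The only real difference is that you make explicit the step from the simple roots of $P$ to all of $\bm\Phi_P^+$ (a root that is a nonnegative combination of positive roots is itself positive), a standard fact the paper's one-line proof leaves implicit.
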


\begin{proof}
By hypothesis, for every $\alpha\in P$ we have $\ell(ws_\alpha)=\ell(w)+1$ and so by (\ref{radici2}) we have $w\alpha\in \bf\Phi^+$. 
\end{proof}

\begin{lemma}\label{propunicradice}
Let $P\subset \Sigma$. 
If $w$ is an element of minimal length in $wW_P\in W/W_P$ then for every $w'\in W_P$ we have 
$\ell(ww')=\ell(w)+\ell(w')$. 
\end{lemma}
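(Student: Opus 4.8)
The plan is to reduce the equality $\ell(ww')=\ell(w)+\ell(w')$, via the last assertion of lemma \ref{radici3}, to the inclusion $N(w')\subseteq N(ww')$, and to deduce that inclusion from proposition \ref{proplongmin}.

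The one ingredient I would isolate first is that $W_P$ stabilizes $\bm\Phi_P$, and hence also $\bm\Psi_P^+=\bm\Phi^+\setminus\bm\Phi_P^+$. For $\mathbf{GL}_m$ this is immediate from the explicit description of the roots: the set $P$ determines a partition of $\{1,\dots,m\}$ into consecutive blocks, $W_P$ is exactly the group of permutations preserving each block setwise, $\bm\Phi_P$ consists of the $\alpha_{hk}$ with $h,k$ in a common block, and $\bm\Psi_P^+$ of the $\alpha_{hk}$ with $h<k$ in distinct blocks; since the blocks are intervals of $\{1,\dots,m\}$, an element of $W_P$ sends a root of the latter shape to another root of the same shape, so it keeps $\bm\Psi_P^+$ inside $\bm\Phi^+$. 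Consequently, for every $w'\in W_P$ one has $N(w')=\bm\Phi^+\cap (w')^{-1}\bm\Phi^-\subseteq\bm\Phi_P^+$, because a root lying in $\bm\Psi_P^+$ is carried by $w'$ into $\bm\Psi_P^+\subseteq\bm\Phi^+$ and hence cannot belong to $N(w')$.

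Now fix $w'\in W_P$ and $\alpha\in N(w')$. By the previous step $\alpha\in\bm\Phi_P^+$; since $w'$ stabilizes $\bm\Phi_P$ and $w'\alpha\in\bm\Phi^-$, we may write $w'\alpha=-\gamma$ with $\gamma\in\bm\Phi_P^+$. As $w$ has minimal length in $wW_P$, proposition \ref{proplongmin} gives $w\gamma\in\bm\Phi^+$, and therefore $ww'\alpha=-w\gamma\in\bm\Phi^-$, that is $\alpha\in N(ww')$. This proves $N(w')\subseteq N(ww')$, and the final equivalence in lemma \ref{radici3} then yields $\ell(ww')=\ell(w)+\ell(w')$.

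I expect the only substantive point to be the stability of $\bm\Phi_P$ (equivalently of $\bm\Psi_P^+$) under $W_P$ — the ``Levi'' structure of the root system — which, as noted, is elementary in the $\mathbf{GL}_m$ case; everything else is a direct application of proposition \ref{proplongmin} and lemma \ref{radici3}. An alternative that avoids the sets $N(\cdot)$ altogether is an induction on $\ell(w')$: write $w'=w''s_\beta$ with $\beta\in P$ and $\ell(w'')=\ell(w')-1$; then $w''\beta\in\bm\Phi^+$ by (\ref{radici2}), and $w''\beta\in\bm\Phi_P$ because $w''\in W_P$, so $w(w''\beta)\in\bm\Phi^+$ by proposition \ref{proplongmin}; combining this with the inductive hypothesis $\ell(ww'')=\ell(w)+\ell(w'')$ and (\ref{radici2}) gives $\ell(ww')=\ell(ww'')+1=\ell(w)+\ell(w')$.
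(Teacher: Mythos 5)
Your main argument is essentially the paper's proof: reduce to $N(w')\subset N(ww')$ via the last part of lemma \ref{radici3}, note that for $w'\in W_P$ any $\alpha\in N(w')$ lies in $\bm\Phi_P^+$ with $w'\alpha\in\bm\Phi_P^-$, and conclude via proposition \ref{proplongmin} that $ww'\alpha\in w\bm\Phi_P^-\subset\bm\Phi^-$. The only difference is that you make explicit the Levi stability fact ($W_P$ stabilizes $\bm\Phi_P$, hence $\bm\Psi_P^+$), which the paper uses without comment, and you supply an alternative induction on $\ell(w')$; both are sound, but the route is the same.
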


\begin{proof}
Thanks to Lemma \ref{radici3}, in order to prove first assertion we need to show $N(w')\subset N(ww')$. 
Let $\alpha\in N(w')$. 
Since $w'\in W_P$ then $\alpha$ must be in $\bm\Phi_P^+$ and so $w'\alpha\in\bm\Phi_P^-$. 
By proposition \ref{proplongmin} we obtain $ww'\alpha\in w\bm\Phi_P^-\subset \bm\Phi^-$ and then $\alpha\in N(ww')$. 
\end{proof}

\noindent
Lemma \ref{propunicradice} implies that if $P\subset \Sigma$ then in each class of $W/W_P$ there exists a unique element of minimal length.
The same holds in each class of $W_P\bs W$ because if $w$ is of minimal length in $W_Pw\in W_P\bs W$ then $w^{-1}$ is of minimal length in $w^{-1}W_P\in W/W_P$.

\begin{prop}\label{proplongmin2}
Let $P,Q\subset \Sigma$, $w$ be the element of minimal length in $W_Pw\in W_P\bs W$ and $w'$ be the element of minimal length in $wW_Q\in W/W_Q$. 
Then $w'$ is the element of minimal length in $W_Pw' \in W_P\bs W$. 
\end{prop}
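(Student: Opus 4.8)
The plan is to move everything to the length function and exploit that $w'$ lies in the same coset $wW_Q$ as $w$, with both $w$ and $w'$ being minimal-length representatives of their respective cosets. Since $w'\in wW_Q$ we may write $w'=wu$ with $u\in W_Q$, so that also $w=w'u^{-1}$. First I would record two length-additivity identities. Since $w^{-1}$ has minimal length in $w^{-1}W_P$ (as noted after Lemma \ref{propunicradice}), applying that lemma to $w^{-1}$ and to $v^{-1}\in W_P$ gives $\ell(w^{-1}v^{-1})=\ell(w^{-1})+\ell(v^{-1})$, and passing to inverses (the length is invariant under inversion) yields $\ell(vw)=\ell(v)+\ell(w)$ for every $v\in W_P$. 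Likewise, since $w'$ has minimal length in $w'W_Q=wW_Q$ and $w=w'u^{-1}$ with $u^{-1}\in W_Q$, Lemma \ref{propunicradice} gives $\ell(w)=\ell(w')+\ell(u)$.

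Next, I would bound $\ell(vw')$ from below for an arbitrary $v\in W_P$. Using the subadditivity $\ell(ab)\le\ell(a)+\ell(b)$ noted after Lemma \ref{radici3}, we have $\ell(vw)=\ell\big((vwu)u^{-1}\big)\le\ell(vwu)+\ell(u)$, hence
\[\ell(vw')=\ell(vwu)\geq\ell(vw)-\ell(u)=\ell(v)+\ell(w)-\ell(u)=\ell(v)+\ell(w')\geq\ell(w'),\]
where the two middle equalities use the identities just established. Thus $w'$ is an element of minimal length in its own coset $W_Pw'\in W_P\bs W$, and by the uniqueness of such an element (recorded immediately after Lemma \ref{propunicradice}) $w'$ is \emph{the} element of minimal length in $W_Pw'$, which is exactly the assertion.

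I do not anticipate a genuine obstacle here: this is a classical fact about minimal double-coset representatives and the computation is short. The only point needing care is the bookkeeping of left versus right cosets --- in particular the passage between ``$w$ of minimal length in $W_Pw$'' and ``$w^{-1}$ of minimal length in $w^{-1}W_P$'' --- so that Lemma \ref{propunicradice}, stated for $W/W_P$, is applied on the correct side each time. A purely root-theoretic route is also available (characterise minimality of $w'$ in $W_Pw'$ by $N(w'^{-1})\cap\bm\Phi_P^+=\emptyset$ and analyse $N(w'^{-1})=N(u^{-1}w^{-1})$ via Lemma \ref{radici3} together with $N(u^{-1})\subset\bm\Phi_Q^+$), but it looks more cumbersome than the length argument, so I would not pursue it.
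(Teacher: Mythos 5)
Your proof is correct. It uses the same essential ingredients as the paper — the decomposition $w=w'u^{-1}$ with $u^{-1}\in W_Q$, Lemma \ref{propunicradice} to get $\ell(w)=\ell(w')+\ell(u)$, and subadditivity of length — but it runs a direct argument rather than the paper's contradiction. The paper supposes there is an $\alpha\in P$ with $\ell(s_\alpha w')<\ell(w')$ and derives $\ell(s_\alpha w)<\ell(w)$, implicitly relying on the standard fact that minimality of $w'$ in $W_Pw'$ can be tested on the simple reflections from $P$ alone. You instead establish the stronger, fully self-contained bound $\ell(vw')\geq\ell(w')$ for an arbitrary $v\in W_P$, which requires the extra length-additivity $\ell(vw)=\ell(v)+\ell(w)$ (obtained by applying Lemma \ref{propunicradice} to $w^{-1}$ and inverting). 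The trade-off: your version is one identity longer but avoids the implicit appeal to the characterization of minimal coset representatives via simple reflections; the paper's version is shorter but leans on that unstated (though standard) Coxeter-group fact. Both are sound, and your bookkeeping of left versus right cosets is correct throughout.
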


\begin{proof}
Since $w\in w'W_Q$, there exists $w''\in W_Q$ such that $w=w'w''$ and by lemma \ref{propunicradice} we have 
$\ell(w)=\ell(w')+\ell(w'')$. 
We now suppose by contradiction that there exists $\alpha\in P$ such that $\ell(s_\alpha w')<\ell(w')$. 
We obtain 
$\ell(s_\alpha w)=\ell(s_\alpha w'w'')\leq \ell(s_\alpha w')+\ell(w'')<\ell(w')+\ell(w'')=\ell(w)$ that contradicts the fact that $w$ is of minimal length in $W_Pw$. 
\end{proof}

\subsection{Generators}\label{sezionegeneratori}
In this paragraph we look for a set of generators of the $\Z$-algebra $\mathscr{H}(G,K^1)$ of the form $f_x$ with $x\in G$. 

\smallskip
For every $\alpha=\alpha_{i,i+1}\in\Sigma$ we consider the matrix  
\[\tau_{\alpha}=\tau_i=\begin{pmatrix} \mathbb{I}_i &0\\ 0&\varpi\mathbb{I}_{m-i}\end{pmatrix}\]
in $ G$ and we denote $\tau_0=\varpi\mathbb{I}_{m}$ and $\tau_m=\mathbb{I}_{m}$. 
Let $\bm\Delta$ be the commutative monoid generated by $\tau_\alpha$ with $\alpha\in\Sigma$.
Then we can write every element $\tau\in\bm\Delta$ uniquely as $\tau=\prod_{\alpha\in\Sigma}\tau_\alpha^{i_\alpha}$ with $i_\alpha\in\N$ and uniquely as $\tau=\mathrm{diag}(1,\varpi^{a_1},\dots,\varpi^{a_{m-1}})$ with $0\leq a_1\leq\cdots\leq a_{m-1}$. 
We denote $P(\tau)=\{\alpha\in\Sigma\,|\,i_{\alpha}=0\}$ and if $P\subset \{0,\dots,m\}$ or if $P\subset\Sigma$ we write $\tau_P$ in place of $\prod_{x\in P}\tau_x$. 
We remark that if $P\subset\Sigma$ then $P(\tau_P)=\widehat{P}$. 

\smallskip
We denote $\Omega=K\cup\{\tau_0,\tau_0^{-1}\}\cup\{\tau_\alpha\,|\, \alpha\in\Sigma\}$ and 
$\bm\Omega=\{f_\omega\in \mathscr{H}( G,K^1) \,|\, \omega\in\Omega\}$. 
The set $\bm\Omega$ is finite because if $\omega\in K^1$ then $f_\omega=1$ and $K/K^1\cong GL_m(\frack_{D})$ is a finite group.
We want to prove that the algebra $\mathscr{H}(G,K^1)$ is generated by $\bm\Omega$.

\smallskip
In all this section we consider the following subgroups of $K$.
\begin{enumerate}[$\bullet$]\label{sottogruppiK}
\item For every $\alpha=\alpha_{ij}\in\bm\Phi$ we denote by $U_{\alpha}$ the subgroup of matrices $(a_{hk})\in K$ with $a_{hh}=1$ for every 
$h\in\{1,\dots,m\}$, $a_{ij}\in \ent_{D}$ and $a_{hk}=0$ if $h\neq k$ and $(h,k)\neq (i,j)$. 
\item For every $P\subset\Sigma$ we denote by $M_{P}$ the intersections with $K$ of the standard Levi subgroup associated to $P$ and by
$U_{P}^+$ (resp. $U_{P}^-$) the intersections with $K$ of the unipotent radical of upper (resp. lower) standard parabolic subgroups with Levi factor the standard Levi subgroup associated to $P$. 
To simplify notations we denote by $U=U_{\emptyset}$ (resp. $U^-=U_{\emptyset}^-$) the subgroup of $K$ of upper (resp. lower) unipotent matrices. 
Finally we denote $M_{P}^+=M_P\cap U$ and $M_{P}^-=M_P\cap U^-$. 
Thus we have
\[U_{P}^+=\prod_{\alpha\in\mathbf{\Psi}_P^+}U_{\alpha},
\qquad 
U_{P}^-=\prod_{\alpha\in\mathbf{\Psi}_P^-}U_{\alpha},
\qquad
M_{P}^+=\prod_{\alpha\in\mathbf{\Phi}_P^+}U_{\alpha},
\qquad 
M_{P}^-=\prod_{\alpha\in\mathbf{\Phi}_P^-}U_{\alpha}.\]
Furthermore, if $P_1\subset P_2\subset\Sigma$ then $U_{P_2}^+$ is a subgroup of $U_{P_1}^+$ and  $U_{P_2}^-$ a subgroup of $U_{P_1}^-$. 
\item We know that there exists a unique multiplicative section of the surjection $\ent_{D}^{\times}\longrightarrow \frack_{D}^{\times}$. 
Then we can canonically identify the group of diagonal matrices with coefficients in $\frack_{D}^{\times}$ to a subgroup $T$ of the group of diagonal matrices with coefficients in $\ent_{D}^{\times}$.
\item We denote by $I=K^1TU$ the standard Iwahori subgroup of $G$ and $I^1=K^1U$ its pro-$p$-radical.
\item We identify $s_\alpha$ to an element of $ G$ for every $\alpha\in \Sigma$ and we identify $W$ to a subgroup of $G$ by permutation matrices.
\end{enumerate}

\begin{rmk}\label{rmqnormalizzatore}
The group $K^1$ is normal in $K$, 
$W$ normalizes $T$, 
$T$ normalizes every $U_{\alpha}$ with $\alpha\in\bm\Phi$,
$\tau_0$ centralizes $W$ and it normalizes $K^1$, $T$ and every $U_\alpha$ with $\alpha\in\bm\Phi$
and finally every $\tau\in\bm\Delta$ centralizes $W_{P(\tau)}$ and it normalizes $T$, $M_{P(\tau)}$ 
and every $U_{\alpha'}$ with $\alpha'\in \mathbf{\Phi}_{P(\tau)}$.
Note that in the case when $D=F$ the element $\tau_0$ is in the centre of $G$ and every $\tau\in\bm\Delta$ centralizes $M_{P(\tau)}$. 
\end{rmk}

\begin{rmk}\label{rmqlunghezza}
The group $W$ acts on the set of $U_\alpha$ with $\alpha\in\bm\Phi$ by $wU_\alpha w^{-1}=U_{w\alpha}$ and so by proposition \ref{propradici} we have $|(U^-\cap wU w^{-1})K^1/K^1|=|(U^-\cap wU w^{-1})/(K^1\cap U^-\cap wU w^{-1})|=q^{\ell(w)}$. 
\end{rmk}

We now state a lemma that is the basis for following calculations and that heavily use the fact that we are in $GL_m(D)$ and not in an another linear group.

\begin{lemma}\label{rmqtau1}
Let $\tau\in\bm\Delta$. 
Then we have $\tau^{-1} U_{P(\tau)}^+ \tau\subset U_{P(\tau)}^+\cap K^1$ and $\tau U_{P(\tau)}^- \tau^{-1}\subset U_{P(\tau)}^-\cap K^1$.
Furthermore, if $\tau=\tau_\alpha$ with $\alpha\in\Sigma$ then these inclusions are equalities.
\end{lemma}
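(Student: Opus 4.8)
The plan is to reduce the statement to a one-entry computation using the explicit diagonal shape of $\tau$ together with the valuation on $D$. Write $\tau=\mathrm{diag}(\varpi^{b_1},\dots,\varpi^{b_m})$ in the normal form fixed above, so that $0=b_1\le b_2\le\cdots\le b_m$ and $b_{j+1}-b_j=i_{\alpha_{j,j+1}}$; then $\alpha_{j,j+1}\in P(\tau)$ iff $b_j=b_{j+1}$, hence for $h<k$ one has $\alpha_{hk}\in\bm\Phi_{P(\tau)}^+$ iff $b_h=b_{h+1}=\cdots=b_k$ iff $b_h=b_k$, so that $\bm\Psi_{P(\tau)}^+=\{\alpha_{hk}\mid h<k,\ b_h<b_k\}$ and $\bm\Psi_{P(\tau)}^-$ is the corresponding set of negative roots $\alpha_{kh}$. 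I will use only one property of $D$: its normalised valuation $v$ restricts to a group homomorphism $D^\times\to\Z$ with $v(\varpi)=1$, and $\ent_D=\{x\in D\mid v(x)\ge0\}$, $\wp_D=\{x\in D\mid v(x)\ge1\}$; in particular $v(\varpi^a x\varpi^c)=a+v(x)+c$ for $x\neq 0$.

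The core step is the computation on each rank-one root subgroup. For $\gamma=\alpha_{hk}$ with $h<k$ and $b_h<b_k$, an element of $U_\gamma$ is a matrix differing from $\mathbb{I}_m$ only in the $(h,k)$ entry, equal to some $x\in\ent_D$; conjugating by $\tau$ turns that entry into $\varpi^{-b_h}x\varpi^{b_k}$, whose valuation is $v(x)+(b_k-b_h)\ge b_k-b_h\ge1$, so it lies in $\wp_D$ and the conjugate lies in $U_\gamma\cap K^1$. Thus $\tau^{-1}U_\gamma\tau\subseteq U_\gamma\cap K^1$, and symmetrically, for $\gamma=\alpha_{kh}$ with $h<k$ and $b_h<b_k$, the map $u\mapsto\tau u\tau^{-1}$ turns the $(k,h)$ entry $x$ into $\varpi^{b_k}x\varpi^{-b_h}$, again of valuation $\ge1$, so $\tau U_\gamma\tau^{-1}\subseteq U_\gamma\cap K^1$. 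Since $U_{P(\tau)}^+=\prod_{\gamma\in\bm\Psi_{P(\tau)}^+}U_\gamma$ and $U_{P(\tau)}^-=\prod_{\gamma\in\bm\Psi_{P(\tau)}^-}U_\gamma$, while conjugation by $\tau$ is a group automorphism of $G$ and $U_{P(\tau)}^\pm\cap K^1$ is a subgroup, I would conclude by writing a general element of $U_{P(\tau)}^+$ (resp. $U_{P(\tau)}^-$) as a product of elements of the relevant $U_\gamma$ and conjugating factor by factor: each factor lands in $U_\gamma\cap K^1\subseteq U_{P(\tau)}^\pm\cap K^1$, hence so does the product, which gives the two asserted inclusions.

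For the equality when $\tau=\tau_\alpha$, take $\alpha=\alpha_{i,i+1}$, so $b_1=\cdots=b_i=0$, $b_{i+1}=\cdots=b_m=1$ and $P(\tau_\alpha)=\widehat\alpha$. Here $U_{\widehat\alpha}^+$ is exactly the abelian group of block matrices $\left(\begin{smallmatrix}\mathbb{I}_i&B\\0&\mathbb{I}_{m-i}\end{smallmatrix}\right)$ with $B\in M_{i\times(m-i)}(\ent_D)$, and conjugation by $\tau_i$ sends such a matrix to $\left(\begin{smallmatrix}\mathbb{I}_i&B\varpi\\0&\mathbb{I}_{m-i}\end{smallmatrix}\right)$; since $B\mapsto B\varpi$ is a bijection of $M_{i\times(m-i)}(\ent_D)$ onto $M_{i\times(m-i)}(\ent_D\varpi)=M_{i\times(m-i)}(\wp_D)$, the image is precisely $U_{\widehat\alpha}^+\cap K^1$, so the first inclusion is an equality. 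The argument for $U_{\widehat\alpha}^-$ is identical, with $\left(\begin{smallmatrix}\mathbb{I}_i&0\\C&\mathbb{I}_{m-i}\end{smallmatrix}\right)\mapsto\left(\begin{smallmatrix}\mathbb{I}_i&0\\\varpi C&\mathbb{I}_{m-i}\end{smallmatrix}\right)$ and $\varpi\ent_D=\wp_D$. There is no deep obstacle here: the two things to watch are the non-commutativity of $D$ (which is why one never simplifies $\varpi^{-b_h}x\varpi^{b_k}$ and only takes its valuation) and the passage from the $U_\gamma$ to $U_{P(\tau)}^\pm$ through the product decomposition. The genuinely $GL_m(D)$-specific feature is that $\tau_\alpha$ scales every root of $\bm\Psi_{\widehat\alpha}^+$ by exactly one power of $\varpi$, so the image of $U_{\widehat\alpha}^\pm$ fills out the full first congruence part of these subgroups and nothing smaller.
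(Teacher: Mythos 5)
Your proof is correct, and it takes a genuinely different route from the paper's. The paper first establishes the equality for $\tau=\tau_\alpha$ by exhibiting the explicit block form of $U_{\widehat\alpha}^+$, and then handles a general $\tau$ by a reduction argument: for each $\alpha'\in\bm\Psi_{P(\tau)}^+$ it picks $\alpha\in\widehat{P(\tau)}$ with $\alpha'\in\bm\Psi_{\widehat\alpha}^+$, factors $\tau=\tau_\alpha\tau(\alpha)$ in $\bm\Delta$, applies the $\tau_\alpha$-case, and then invokes the additional fact that $\tau(\alpha)^{-1}\bigl(U_{\widehat\alpha}^+\cap K^1\bigr)\tau(\alpha)\subset U_{\widehat\alpha}^+\cap K^1$, which is stated without proof (though it is again a diagonal-conjugation computation of the type you carry out). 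Your argument instead works uniformly and directly: identify $\bm\Psi_{P(\tau)}^\pm$ with the set of roots $\alpha_{hk}$ for which $b_h\neq b_k$ in the normal form $\tau=\mathrm{diag}(\varpi^{b_1},\dots,\varpi^{b_m})$, compute the single nontrivial entry of the conjugate $\varpi^{-b_h}x\varpi^{b_k}$ on each rank-one $U_\gamma$, and use the valuation to see that it lands in $\wp_D$; then conclude for $U_{P(\tau)}^\pm$ by the product decomposition and the fact that $U_{P(\tau)}^\pm\cap K^1$ is a subgroup. What your approach buys is that the inclusion is proved entirely from scratch with no appeal to an unproved intermediate containment, and the role of the valuation on $D$ (and hence the $GL_m(D)$-specific nature of the statement, which the paper itself flags) is fully explicit; what the paper's approach buys is a shorter write-up once the $\tau_\alpha$ case and the covering $\bm\Psi_{P(\tau)}^+=\bigcup_{\alpha\in\widehat{P(\tau)}}\bm\Psi_{\widehat\alpha}^+$ are available. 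Your treatment of the equality case for $\tau_\alpha$ matches the paper's block computation, with the extra observation that $\ent_D\varpi=\wp_D$ (since $\wp_D$ is two-sided) turning the inclusion into an equality.
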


\begin{proof}
We start with second assertion.
If $\alpha=\alpha_{i,i+1}\in\Sigma$ then $P(\tau_\alpha)=\widehat\alpha$ and
\[U_{\widehat\alpha}^+=\big\{(u_{hk})
\,|\, u_{hk}\in\ent_{D} \text{ if } 1\leq h\leq i < k\leq m, 
u_{hh}=1 \text{ if } 1\leq h\leq m \text{ and } u_{hk}=0 \text{ otherwise}\big\}\]
and so $\tau_\alpha^{-1} U_{\widehat\alpha}^+\tau_\alpha = U_{\widehat\alpha}^+\cap K^1$.
Similarly we can obtain $\tau_\alpha U_{\widehat\alpha}^-\tau_\alpha^{-1} = U_{\widehat\alpha}^-\cap K^1$.
For the general case we recall that 
$U_{P(\tau)}^+=\prod_{\alpha'\in\mathbf{\Psi}_{P(\tau)}^+}U_{\alpha'}$ and we fix $\alpha'\in \mathbf{\Psi}_{P(\tau)}^+$.
Since 
$\bm\Psi_{P(\tau)}^+=\bigcup_{\alpha\in \widehat{P(\tau)}}\bm\Psi_{\widehat{\alpha}}^+$, there exists 
$\alpha\in \widehat{P(\tau)}$ such that 
$\alpha'\in \bm\Psi_{\widehat{\alpha}}^+$ and so there exists
$\tau(\alpha)\in\bm\Delta$ such that $\tau=\tau_\alpha\tau(\alpha)$ and such that 
$\tau(\alpha)^{-1}(U_{\widehat\alpha}^+\cap K^1)\tau(\alpha)\subset U_{\widehat\alpha}^+\cap K^1$. 
We obtain 
$\tau^{-1} U_{\alpha'} \tau 
\subset \tau(\alpha)^{-1}(\tau_\alpha^{-1} U_{\widehat\alpha}^+\tau_\alpha)\tau(\alpha)
=\tau(\alpha)^{-1}(U_{\widehat\alpha}^+\cap K^1)\tau(\alpha)
\subset U_{\widehat\alpha}^+\cap K^1 $
that is contained in $U_{P(\tau)}^+\cap K^1$ because $P(\tau)\subset \widehat{\alpha}$. 
Hence we have
$\tau^{-1} U_{P(\tau)}^+ \tau\subset U_{P(\tau)}^+\cap K^1$. Similarly we can obtain $\tau U_{P(\tau)}^- \tau^{-1}\subset U_{P(\tau)}^-\cap K^1$.
\end{proof}

\begin{lemma}\label{relgruppi}
Let $\tau\in\bm\Delta$ and $\alpha\in\Sigma$.
\begin{enumerate}[$(a)$]
	\item We have $U_{P(\tau)}^+\tau K^1=\tau K^1$ and $K^1\tau U_{P(\tau)}^-=K^1\tau$.
	\item We have $\tau^{-1}K^1\tau K^1\cap K= U_{P(\tau)}^-K^1$ and $\tau K^1\tau^{-1} K^1\cap K= U_{P(\tau)}^+ K^1$.
	\item $K^1\tau_{\alpha}K^1=\tau_{\alpha}U_{\widehat\alpha}^-K^1=K^1U_{\widehat\alpha}^+\tau_{\alpha}$ and $K^1\tau_{\alpha}^{-1}K^1=\tau_{\alpha}^{-1}U_{\widehat\alpha}^+K^1=K^1U_{\widehat\alpha}^-\tau_{\alpha}^{-1}$.
\end{enumerate}
\end{lemma}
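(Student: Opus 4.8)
The plan is to derive all three parts from two facts: Lemma~\ref{rmqtau1}, which controls the \emph{contracting} conjugates $\tau^{-1}(\cdot)\tau$ on $U_{P(\tau)}^+$ and $\tau(\cdot)\tau^{-1}$ on $U_{P(\tau)}^-$; and the triangular (Iwahori) factorization of $K^1$ along any $P\subset\Sigma$, that is $K^1=(U_P^-\cap K^1)(M_P\cap K^1)(U_P^+\cap K^1)$ and the opposite one, which is immediate from the block $LU$- (resp.\ $UL$-) decomposition of a matrix in $\mathbb{I}_m+M_m(\wp_{D})$. I will also use that conjugation by $\tau^{\pm1}$ preserves $M_{P(\tau)}\cap K^1$: on each diagonal block $\tau$ is a scalar $\varpi^c\mathbb{I}$, and conjugation by $\varpi^c$ is an automorphism of $D$ fixing $\ent_{D}$ and $\wp_{D}$ (compare Remark~\ref{rmqnormalizzatore}). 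Write $P=P(\tau)$. Part~$(a)$ is then immediate: for $u\in U_P^+$ one has $u\tau=\tau(\tau^{-1}u\tau)$ with $\tau^{-1}u\tau\in U_P^+\cap K^1$, so $u\tau K^1=\tau K^1$, and letting $u$ run over $U_P^+$ (which contains $\mathbb{I}_m$) gives $U_P^+\tau K^1=\tau K^1$; the identity $K^1\tau U_P^-=K^1\tau$ follows symmetrically.

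For part~$(b)$, the inclusion $U_P^-K^1\subseteq\tau^{-1}K^1\tau K^1\cap K$ is clear: by Lemma~\ref{rmqtau1} every $u\in U_P^-$ satisfies $\tau u\tau^{-1}\in K^1$, hence $u\in\tau^{-1}K^1\tau$, and $U_P^-K^1\subseteq K$. For the reverse, let $g\in K$ with $g=\tau^{-1}k_1\tau k_2$, $k_i\in K^1$, and factor $k_1=v^-bv^+$ with $v^-\in U_P^-\cap K^1$, $b\in M_P\cap K^1$, $v^+\in U_P^+\cap K^1$. Since $\tau^{-1}b\tau\in K^1$ and $\tau^{-1}v^+\tau\in U_P^+\cap K^1$, we get $g=(\tau^{-1}v^-\tau)h$ with $h:=(\tau^{-1}b\tau)(\tau^{-1}v^+\tau)k_2\in K^1$. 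The factor $\tau^{-1}v^-\tau$ has lower block-unipotent shape (conjugation by the diagonal $\tau$ preserves the unipotent radical over $D$ of the lower parabolic attached to $P$), and $\tau^{-1}v^-\tau=gh^{-1}\in K$; such a matrix lies in $U_P^-$, so $g\in U_P^-K^1$. The statement for $\tau K^1\tau^{-1}K^1\cap K$ is proved the same way, starting from the opposite factorization $k_1=v^+bv^-$ and using that conjugation by $\tau$ sends $U_P^-$ into $K^1$.

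Part~$(c)$ runs the same machine with $\tau=\tau_\alpha$ and $P=\widehat\alpha$, where the inclusions of Lemma~\ref{rmqtau1} become equalities, so in particular $\tau_\alpha^{-1}(U_{\widehat\alpha}^-\cap K^1)\tau_\alpha=U_{\widehat\alpha}^-$. The inclusion $\tau_\alpha U_{\widehat\alpha}^-K^1\subseteq K^1\tau_\alpha K^1$ comes from $\tau_\alpha U_{\widehat\alpha}^-=(U_{\widehat\alpha}^-\cap K^1)\tau_\alpha$; for the reverse, given $k_1\tau_\alpha k_2$ factor $k_1=v^-bv^+$ and push $b$ and $v^+$ through $\tau_\alpha$ as in part~$(a)$, getting $k_1\tau_\alpha k_2=\tau_\alpha(\tau_\alpha^{-1}v^-\tau_\alpha)h$ with $\tau_\alpha^{-1}v^-\tau_\alpha\in U_{\widehat\alpha}^-$ and $h\in K^1$. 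This gives $K^1\tau_\alpha K^1=\tau_\alpha U_{\widehat\alpha}^-K^1$; running the identical argument with $\tau_\alpha^{-1}$ in place of $\tau_\alpha$ (which contracts $U_{\widehat\alpha}^+$ instead of $U_{\widehat\alpha}^-$) and the opposite factorization of $k_1$ gives $K^1\tau_\alpha^{-1}K^1=\tau_\alpha^{-1}U_{\widehat\alpha}^+K^1$. Finally, the two remaining identities $K^1\tau_\alpha K^1=K^1U_{\widehat\alpha}^+\tau_\alpha$ and $K^1\tau_\alpha^{-1}K^1=K^1U_{\widehat\alpha}^-\tau_\alpha^{-1}$ follow by taking inverses in the two identities just obtained, using that $U_{\widehat\alpha}^{\pm}$ are subgroups and $(K^1xK^1)^{-1}=K^1x^{-1}K^1$.

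The one genuine subtlety is that Lemma~\ref{rmqtau1} controls only the contracting conjugates, whereas the expanding ones are a priori known only to lie in the relevant unipotent radical over $D$; so every computation has to be arranged so that only contracting conjugates act on the $K^1$-side. This is why in part~$(b)$ the hypothesis $g\in K$ is essential — it is precisely what forces the single expanding factor $\tau^{-1}v^-\tau$ back into $U_P^-$ — and indeed $\tau^{-1}K^1\tau$ is not contained in $K$ in general, so the ``$\cap K$'' cannot be dropped. The remaining ingredient, the triangular factorization of $K^1$ along an arbitrary parabolic, is elementary and I expect it to cause no trouble.
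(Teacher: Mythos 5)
Your proposal is correct and follows essentially the same route as the paper's proof: the triangular factorization $K^1=(U_P^-\cap K^1)(M_P\cap K^1)(U_P^+\cap K^1)$, Lemma~\ref{rmqtau1} for the contracting conjugates, and the normalization of $M_P\cap K^1$ by $\tau$ from Remark~\ref{rmqnormalizzatore}, with the observation that the single expanding factor is forced into $U_P^{\mp}$ by the intersection with $K$. The only (cosmetic) deviation is in $(c)$: where you rerun the factorization argument directly using the equalities in Lemma~\ref{rmqtau1}, the paper instead notes that $\tau_\alpha^{\pm1}K^1\tau_\alpha^{\mp1}\subset K$, which makes the ``$\cap K$'' in $(b)$ vacuous and lets it quote $(b)$ in one line before taking inverses.
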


\begin{proof}
Point $(a)$ follows from lemma \ref{rmqtau1} since $\tau^{-1} U_{P(\tau)}^+ \tau\subset K^1$ and $\tau U_{P(\tau)}^- \tau^{-1}\subset K^1$.
In order to prove $(b)$ we consider the decomposition $K^1=(K^1\cap U_{P(\tau)}^-)(K^1\cap M_{P(\tau)})(K^1\cap U_{P(\tau)}^+)$. 
Lemma \ref{rmqtau1} implies $\tau^{-1} (K^1\cap U_{P(\tau)}^+)\tau\subset \tau^{-1} U_{P(\tau)}^+\tau \subset K^1$ and 
remark \ref{rmqnormalizzatore} implies that $\tau$ normalizes $K^1\cap M_{P(\tau)}$.
This give rise to $\tau^{-1}K^1\tau K^1= \tau^{-1}(K^1\cap U_{P(\tau)}^-)\tau K^1$.
Furthermore, by lemma \ref{rmqtau1} we have $U_{P(\tau)}^-\subset \tau^{-1}(K^1\cap U_{P(\tau)}^-)\tau$ and 
we also have $\tau^{-1}(K^1\cap U_{P(\tau)}^-)\tau\cap K\subset \tau^{-1} U_{P(\tau)}^-\tau\cap K\subset U_{P(\tau)}^-$. 
This implies $\tau^{-1}(K^1\cap U_{P(\tau)}^-)\tau\cap K=U_{P(\tau)}^-$ and so $\tau^{-1}K^1\tau K^1\cap K=U_{P(\tau)}^- K^1$.
Similarly we can obtain $\tau K^1\tau^{-1} K^1\cap K= U_{P(\tau)}^+ K^1$.
Finally to prove $(c)$ we observe that $\tau_{\alpha}^{-1} K^1 \tau_\alpha$ and $\tau_\alpha K^1 \tau_\alpha^{-1}$ are contained in $K$. 
If we apply $(b)$ we obtain 
$K^1\tau_\alpha K^1= \tau_\alpha U_{\widehat{\alpha}}^-K^1$ and $K^1\tau_\alpha^{-1} K^1=\tau_\alpha^{-1} U_{\widehat{\alpha}}^+K^1$. 
Taking inverses we obtain $K^1\tau_{\alpha}^{-1}K^1=K^1 U_{\widehat\alpha}^-\tau_\alpha^{-1}$ and
$K^1\tau_{\alpha}K^1= K^1 U_{\widehat\alpha}^+\tau_\alpha$.
\end{proof}

\begin{lemma}\label{lemmatau} 
The elements $f_{\tau_{\alpha}}$ with $\alpha\in\Sigma$ commute with each other and if $\tau=\prod_{\alpha\in\Sigma}\tau_{\alpha}^{i_{\alpha}}\in\bm\Delta$ then $f_{\tau}=\prod_{\alpha\in\Sigma}f_{\tau_{\alpha}}^{i_{\alpha}}$. 
\end{lemma}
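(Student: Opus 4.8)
We must show two things about the elements $f_{\tau_\alpha}$, $\alpha\in\Sigma$: first, they commute pairwise; second, for any $\tau=\prod_{\alpha\in\Sigma}\tau_\alpha^{i_\alpha}\in\bm\Delta$ we have $f_\tau=\prod_{\alpha\in\Sigma}f_{\tau_\alpha}^{i_\alpha}$. Both assertions will follow from one basic multiplicativity fact together with Lemma~\ref{lemmaprodotto} (equivalently the displayed formula \eqref{eqprodotto2}).

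Let me think about the structure. The elements $\tau_\alpha$ all lie in the commutative monoid $\bm\Delta$; they pairwise commute in $G$. But Lemma~\ref{lemmaprodotto} only gives $f_x f_y = f_{xy}$ when $x$ or $y$ normalizes $K^1$ — and a general $\tau_\alpha$ does *not* normalize $K^1$. So we cannot just say "$\tau_\alpha$, $\tau_\beta$ commute hence $f_{\tau_\alpha}, f_{\tau_\beta}$ commute." Instead the key is to compute supports.

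For the support computation: by Lemma~\ref{lemmaprodotto}, $\mathrm{supp}(f_{\tau_\alpha} f_{\tau_\beta}) = K^1\tau_\alpha K^1 \tau_\beta K^1$. I want to show this equals $K^1 \tau_\alpha\tau_\beta K^1$, which would then also equal $\mathrm{supp}(f_{\tau_\beta} f_{\tau_\alpha})$, so that both products are supported on a single double coset $K^1\tau_\alpha\tau_\beta K^1$; and then I need to check the coefficient is $1$, which amounts to checking $|(K^1\tau_\alpha K^1 \cap \tau_\alpha\tau_\beta K^1\tau_\beta^{-1}K^1)/K^1| = 1$ via \eqref{eqprodotto2}. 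Equivalently, and more cleanly, I want to establish the single-coset decomposition

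\begin{equation}\label{planeq}
K^1\tau_\alpha K^1\tau_\beta K^1 = K^1\tau_\alpha\tau_\beta K^1 \quad\text{with}\quad f_{\tau_\alpha}f_{\tau_\beta}=f_{\tau_\alpha\tau_\beta}.
\end{equation}

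The plan is to use Lemma~\ref{relgruppi}. By part $(c)$, $K^1\tau_\alpha K^1 = K^1 U_{\widehat\alpha}^+\tau_\alpha$, so $K^1\tau_\alpha K^1\tau_\beta K^1 = K^1 U_{\widehat\alpha}^+\tau_\alpha\tau_\beta K^1$ (using $\tau_\alpha\tau_\beta=\tau_\beta\tau_\alpha$ freely in $G$). Now I need $U_{\widehat\alpha}^+\tau_\alpha\tau_\beta K^1 = \tau_\alpha\tau_\beta K^1$, i.e. that $\tau=\tau_\alpha\tau_\beta$ absorbs $U_{\widehat\alpha}^+$ from the left into $K^1$ on the right. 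But $\tau_\alpha\tau_\beta = \tau_{\{\alpha,\beta\}}$ in the notation of the paper, and $P(\tau_{\{\alpha,\beta\}}) = \widehat{\{\alpha,\beta\}} \subset \widehat\alpha$, hence $U_{\widehat\alpha}^+ \supset U_{\widehat{\{\alpha,\beta\}}}^+ = U_{P(\tau)}^+$... wait, that inclusion goes the wrong way. Let me reconsider: I actually need $U_{\widehat\alpha}^+ \subset U_{P(\tau)}^+$ where $\tau = \tau_\alpha\tau_\beta$. Since $P(\tau) = \widehat{\{\alpha,\beta\}}$, and for $P_1\subset P_2$ we have $U_{P_2}^+\subset U_{P_1}^+$ (stated in the bullet list), taking $P_1 = \widehat{\{\alpha,\beta\}}= P(\tau)$ and $P_2 = \widehat\alpha$ (note $P_1\subset P_2$ since $\{\alpha,\beta\}\supset\{\alpha\}$ implies complements reverse... no: $\{\alpha\}\subset\{\alpha,\beta\}$ gives $\widehat{\{\alpha,\beta\}}\subset\widehat\alpha$, good), we get $U_{\widehat\alpha}^+ \subset U_{P(\tau)}^+$. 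Then Lemma~\ref{relgruppi}$(a)$ gives $U_{P(\tau)}^+\tau K^1 = \tau K^1$, hence $U_{\widehat\alpha}^+\tau K^1 = \tau K^1$. This proves $K^1\tau_\alpha K^1\tau_\beta K^1 = K^1\tau_\alpha\tau_\beta K^1$, a single double coset. Plugging into \eqref{eqprodotto2}: since the support is one double coset containing $\tau_\alpha\tau_\beta$, and $(f_{\tau_\alpha}f_{\tau_\beta})(\tau_\alpha\tau_\beta)$ must be a positive integer, I need it to be exactly $1$; the cleanest route is to observe that $f_{\tau_\alpha}f_{\tau_\beta}$ evaluated via the convolution sum $\sum_x f_{\tau_\alpha}(x)f_{\tau_\beta}(x^{-1}\tau_\alpha\tau_\beta)$ counts cosets $xK^1$ with $x\in K^1\tau_\alpha K^1$ and $x^{-1}\tau_\alpha\tau_\beta\in K^1\tau_\beta K^1$; using the left-coset decomposition $K^1\tau_\alpha K^1 = \bigsqcup U_{\widehat\alpha}^+$-type representatives... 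Actually here is the slick argument: $f_{\tau_\alpha}f_{\tau_\beta}$ and $f_{\tau_\beta}f_{\tau_\alpha}$ have the same support, namely the single double coset $K^1\tau_\alpha\tau_\beta K^1$, so $f_{\tau_\alpha}f_{\tau_\beta} = c\,f_{\tau_\alpha\tau_\beta}$ and $f_{\tau_\beta}f_{\tau_\alpha} = c'\,f_{\tau_\alpha\tau_\beta}$ for positive integers $c,c'$; comparing "total mass" (the value of $\sum_{\text{left cosets}}$ of the product, which is multiplicative: the number of left $K^1$-cosets in the support of a product of characteristic functions of double cosets equals the product of the numbers of left cosets, i.e. $\deg(f_{\tau_\alpha}f_{\tau_\beta}) = \deg f_{\tau_\alpha}\cdot\deg f_{\tau_\beta}$ where $\deg f_x = [K^1xK^1:K^1]$) forces $c = c'$; and then to pin $c=1$ I compute $[K^1\tau_\alpha K^1:K^1]\cdot[K^1\tau_\beta K^1:K^1] = c\cdot[K^1\tau_\alpha\tau_\beta K^1:K^1]$, and each index is computable from Lemma~\ref{relgruppi}$(c)$: $[K^1\tau_\alpha K^1:K^1] = |U_{\widehat\alpha}^+ K^1/K^1| = |U_{\widehat\alpha}^+/(U_{\widehat\alpha}^+\cap K^1)| = q^{|\bm\Psi_{\widehat\alpha}^+|}$, and for $\tau = \tau_\alpha\tau_\beta$, $[K^1\tau K^1:K^1] = |U_{P(\tau)}^+ K^1/K^1| = q^{|\bm\Psi_{P(\tau)}^+|}$; since $\bm\Psi_{P(\tau)}^+ = \bm\Psi_{\widehat\alpha}^+\cup\bm\Psi_{\widehat\beta}^+$ is a *disjoint* union (as $\bm\Psi_{\widehat\alpha}^+\cap\bm\Psi_{\widehat\beta}^+ = \emptyset$ — the roots "crossing the cut at $\alpha$" and those "crossing at $\beta$" are disjoint for distinct simple roots in type $A$, easily checked on the explicit description in the Example), the exponents add: $|\bm\Psi_{P(\tau)}^+| = |\bm\Psi_{\widehat\alpha}^+| + |\bm\Psi_{\widehat\beta}^+|$, giving $c=1$. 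This establishes \eqref{planeq}, hence commutativity: $f_{\tau_\alpha}f_{\tau_\beta} = f_{\tau_\alpha\tau_\beta} = f_{\tau_\beta\tau_\alpha} = f_{\tau_\beta}f_{\tau_\alpha}$.

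**Second assertion, by induction.** Write $\tau = \prod_{\alpha\in\Sigma}\tau_\alpha^{i_\alpha}$; I induct on $\sum i_\alpha$. The base case is trivial. For the inductive step write $\tau = \tau_\alpha\tau'$ with $\tau' = \prod\tau_\gamma^{i'_\gamma}$ of smaller total degree, $i'_\alpha = i_\alpha - 1$; by induction $f_{\tau'} = \prod f_{\tau_\gamma}^{i'_\gamma}$, so it suffices to prove $f_{\tau_\alpha}f_{\tau'} = f_{\tau_\alpha\tau'} = f_\tau$. This is exactly the same argument as \eqref{planeq} but with $\tau_\beta$ replaced by the general $\tau'\in\bm\Delta$: $K^1\tau_\alpha K^1\tau' K^1 = K^1 U_{\widehat\alpha}^+\tau_\alpha\tau' K^1$ by Lemma~\ref{relgruppi}$(c)$, and since $P(\tau_\alpha\tau')\subset\widehat\alpha$ (because $\alpha$ appears in $\tau_\alpha\tau'$ with positive exponent, so $\alpha\notin P(\tau_\alpha\tau')$) we have $U_{\widehat\alpha}^+\subset U_{P(\tau_\alpha\tau')}^+$ and Lemma~\ref{relgruppi}$(a)$ collapses this to $K^1\tau_\alpha\tau' K^1$. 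The coefficient is $1$ by the same index count, now using that $\bm\Psi_{P(\tau_\alpha\tau')}^+ = \bm\Psi_{\widehat\alpha}^+\sqcup\bm\Psi_{P(\tau')}^+$ disjointly (which follows from $\bm\Psi_{P}^+=\bigcup_{\gamma\in\widehat P}\bm\Psi_{\widehat\gamma}^+$ together with disjointness of the $\bm\Psi_{\widehat\gamma}^+$ across distinct simple roots). That closes the induction.

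**Main obstacle.** The one genuinely non-formal point is pinning the structure constant to $1$, i.e. showing $[K^1\tau_\alpha K^1:K^1]\cdot[K^1\tau' K^1:K^1] = [K^1\tau_\alpha\tau' K^1:K^1]$. Everything reduces to the combinatorial fact that the "positive non-Levi" root sets $\bm\Psi_{\widehat\gamma}^+$ for distinct $\gamma\in\Sigma$ are pairwise disjoint, so that the $q$-power indices multiply exactly; this is immediate from the explicit description of $\bm\Psi_{\widehat\alpha}^+$ in the Example ($\bm\Psi_{\widehat{\alpha_{i,i+1}}}^+$ consists of the $\alpha_{hk}$ with $h\le i<k$, and these ranges for different $i$ are disjoint). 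Once that is in hand, both claims of the lemma follow from Lemmas~\ref{lemmaprodotto}, \ref{relgruppi}, and the index formula $[K^1xK^1:K^1]=q^{|\bm\Psi_{P}^+|}$ derived from Lemma~\ref{relgruppi}$(c)$.
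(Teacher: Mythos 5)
Your support computation is correct and essentially matches the paper: using Lemma~\ref{relgruppi}$(c)$ to write $K^1\tau_\alpha K^1 = K^1U_{\widehat\alpha}^+\tau_\alpha$, noting $P(\tau_\alpha\tau')\subset\widehat\alpha$ so $U_{\widehat\alpha}^+\subset U_{P(\tau_\alpha\tau')}^+$, and absorbing by Lemma~\ref{relgruppi}$(a)$. Your reduction of commutativity to the multiplicativity $f_{\tau_\alpha}f_{\tau_\beta}=f_{\tau_\alpha\tau_\beta}$ is also the same as the paper's, as is the induction on total degree.

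The coefficient computation, however, has two genuine errors and is broken. First, the formula $[K^1\tau K^1:K^1]=q^{|\bm\Psi_{P(\tau)}^+|}$ is false for $\tau\in\bm\Delta$ that is not a single generator. Lemma~\ref{relgruppi}$(c)$ gives $K^1\tau_\alpha K^1=\tau_\alpha U_{\widehat\alpha}^-K^1$ only for $\tau_\alpha$ itself; for general $\tau$, the inclusion $\tau U_{P(\tau)}^-\tau^{-1}\subset U_{P(\tau)}^-\cap K^1$ from Lemma~\ref{rmqtau1} is strict, so the double coset is strictly larger than $\tau U_{P(\tau)}^-K^1$. Concretely, in $GL_2(F)$ with $\tau=\tau_1^2=\mathrm{diag}(1,\varpi^2)$ we have $P(\tau)=\emptyset$ and $q^{|\bm\Psi_\emptyset^+|}=q$, but $[K^1\tau K^1:K^1]=[K^1:K^1\cap\tau K^1\tau^{-1}]=q^2$. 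Second, the claimed disjointness of the sets $\bm\Psi_{\widehat\gamma}^+$ over distinct $\gamma\in\Sigma$ is false: in $GL_3$ with $\alpha=\alpha_{12}$ and $\beta=\alpha_{23}$ the Example gives $\bm\Psi_{\widehat\alpha}^+=\{\alpha_{12},\alpha_{13}\}$ and $\bm\Psi_{\widehat\beta}^+=\{\alpha_{13},\alpha_{23}\}$, which overlap at $\alpha_{13}$. So the claimed identity $|\bm\Psi_{P(\tau_\alpha\tau')}^+|=|\bm\Psi_{\widehat\alpha}^+|+|\bm\Psi_{P(\tau')}^+|$ fails, and the degree argument does not pin the structure constant to $1$.

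The degree homomorphism idea is salvageable (the correct formula is $[K^1\tau K^1:K^1]=q^{\sum_{i<j}(c_j-c_i)}$ for $\tau=\mathrm{diag}(\varpi^{c_1},\dots,\varpi^{c_m})$, which is additive over $\bm\Delta$), but this is not what you wrote. The paper instead evaluates the coefficient directly via formula~(\ref{eqprodotto2}): it shows $(f_{\tau_{\alpha'}}f_{\widetilde\tau})(\tau)=\big|(\tau_{\alpha'}^{-1}K^1\tau_{\alpha'}K^1\cap\widetilde\tau K^1\widetilde\tau^{-1}K^1)/K^1\big|$, rewrites the first factor as $U_{\widehat{\alpha'}}^-K^1$ via Lemma~\ref{relgruppi}$(c)$, and then uses Lemma~\ref{relgruppi}$(b)$ to identify the intersection with $U_{\widehat{\alpha'}}^-K^1\cap U_{P(\widetilde\tau)}^+K^1=K^1$, since a lower and an upper unipotent group meet only in $K^1$ modulo $K^1$. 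You should replace your index count with this direct evaluation (or supply the correct degree formula and its additivity).
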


\begin{proof}
We proceed by induction on the natural number $I(\tau)=\sum_{\alpha\in\Sigma} i_{\alpha}$. 
If $I(\tau)=0$ then $\tau=1$ and the result holds. 
If $I(\tau)>0$ let $\alpha'\in \Sigma$ be such that $i_{\alpha'}>0$. 
We set $\widetilde{\tau}=\tau_{\alpha'}^{-1}\tau\in\bm\Delta$ and we observe that $I(\widetilde{\tau})<I(\tau)$. 
The support of $f_{\tau_{\alpha'}}f_{\widetilde{\tau}}$ is $K^1\tau_{\alpha'} K^1\widetilde{\tau} K^1$. 
By lemma \ref{relgruppi}$(c)$ this support is $K^1U_{\widehat{\alpha'}}^+\tau_{\alpha'}\widetilde{\tau} K^1$ and by lemma \ref{relgruppi}$(a)$ it is $K^1\tau K^1$ since $P(\tau)\subset P(\tau_{\alpha'})$. 
By formula (\ref{eqprodotto2}) and by lemma \ref{relgruppi}$(c)$ we obtain
$f_{\tau_{\alpha'}}f_{\widetilde{\tau}}(\tau)=|(K^1\tau_{\alpha'} K^1\cap \tau K^1\widetilde{\tau}^{\,-1}K^1)/K^1|
=|(\tau_{\alpha'}^{-1}K^1\tau_{\alpha'} K^1\cap \widetilde{\tau} K^1\widetilde{\tau}^{\,-1}K^1)/K^1|
=|(U_{\widehat{\alpha'}}^- K^1\cap \widetilde{\tau}K^1\widetilde{\tau}^{\,-1}K^1)/K^1|$.
By lemma \ref{relgruppi}$(b)$ we have
$U_{\widehat{\alpha'}}^- K^1\cap \widetilde{\tau}K^1\widetilde{\tau}^{\,-1}K^1=U_{\widehat{\alpha'}}^- K^1\cap K\cap \widetilde{\tau}K^1\widetilde{\tau}^{\,-1}K^1=U_{\widehat{\alpha'}}^- K^1 \cap U_{P(\widetilde{\tau})}^+K^1=K^1$
and so $f_{\tau_{\alpha'}}f_{\widetilde{\tau}}(\tau)=1$ which implies $f_{\tau}=f_{\tau_{\alpha'}}f_{\widetilde{\tau}}$.
Commutativity of the $f_{\tau_{\alpha}}$ follows since 
$f_{\tau_{\alpha_1}}f_{\tau_{\alpha_2}}=f_{\tau_{\alpha_1}\tau_{\alpha_2}}=f_{\tau_{\alpha_2}\tau_{\alpha_1}}=f_{\tau_{\alpha_2}}f_{\tau_{\alpha_1}}$ for every $\alpha_1,\alpha_2\in\Sigma$. 
Finally, by induction hypothesis we obtain $f_{\tau}=\prod f_{\tau_{\alpha}}^{i_{\alpha}}$.
\end{proof}

\begin{lemma}\label{scomposizione}
For every element $x$ in $G$ there exist $k_1,k_2\in K^1$, $u_1,u_2\in U$ and unique $t\in T$,  $i\in\Z$, $\tau\in\bm\Delta$, $w_1\in W$ and $w_2$ of minimal length in $W_{P(\tau)}w_2\in W_{P(\tau)}\bs W$ such that $x=k_1u_1t\tau_0^{i}w_1\tau w_2u_2k_2$. 
In particular we have $G=K^1UT\tau_0^{\Z}W\bm\Delta WUK^1$.
\end{lemma}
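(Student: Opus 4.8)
The statement has two parts: an existence-and-partial-uniqueness decomposition of an arbitrary $x \in G$, and the consequent factorization $G = K^1 U T \tau_0^{\Z} W \bm\Delta W U K^1$. The second part is an immediate corollary of the first once we observe that the middle factors $t\tau_0^i w_1 \tau w_2$ all lie in $T\tau_0^{\Z}W\bm\Delta W$, so I will concentrate on the decomposition itself. The natural engine is the affine Bruhat–Tits / Cartan machinery for $G = GL_m(D)$: the group $K = GL_m(\ent_D)$ together with the extended affine Weyl group gives $G = \coprod K^1 \backslash G / K^1$ controlled by $I$-double cosets (Iwahori decomposition) together with $W$-action, and the elementary divisor (Cartan) decomposition $G = K \bm\Delta^{\pm} K$ where $\bm\Delta^{\pm}$ is generated by the $\tau_\alpha^{\pm 1}$ and $\tau_0^{\pm 1}$.

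\textbf{Step 1: coarse decomposition via Cartan and Iwahori.} First I would apply the Cartan decomposition in $GL_m(D)$: every $x$ can be written $x = k \, d \, k'$ with $k,k' \in K$ and $d = \mathrm{diag}(\varpi^{b_1},\dots,\varpi^{b_m})$, $b_1 \le \cdots \le b_m$. Pulling out a central power $\tau_0^i = \varpi^i \mathbb{I}_m$ with $i = b_1$ makes the remaining diagonal part $\tau = \mathrm{diag}(1,\varpi^{a_1},\dots,\varpi^{a_{m-1}})$ with $0 \le a_1 \le \cdots \le a_{m-1}$, which is exactly the normal form for an element of $\bm\Delta$ recorded just before Lemma~\ref{lemmatau}. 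So $x \in K \tau_0^i \tau K$ for a unique $i \in \Z$ and unique $\tau \in \bm\Delta$ (uniqueness of $(i,\tau)$ being the uniqueness of elementary divisors). It remains to refine each copy of $K$ on the left and right down to $K^1$ times explicit coset representatives.

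\textbf{Step 2: refining $K$ to $K^1$ on the left.} Since $K^1$ is normal in $K$ with $K/K^1 \cong GL_m(\frack_D)$, I would use the Bruhat decomposition of the finite group $GL_m(\frack_D)$ over its standard Borel $\bar B = \bar T \bar U$: $GL_m(\frack_D) = \bar B W \bar B = \bar U \bar T W \bar U$. Lifting this through the section giving $T$ and through $U$, $U^-$, one gets $K = K^1 U T W U = K^1 U T W U K^1$ and symmetrically on the right. Substituting into $x = k\,\tau_0^i \tau\, k'$ yields $x = k_1 u T w_1 (\ldots) \tau_0^i \tau (\ldots) w_2 u_2 k_2$; here I would commute $\tau_0^i$ (central, by Remark~\ref{rmqnormalizzatore}) and $T$, $w_1$ to the front past each other freely — $W$ normalizes $T$, $\tau_0$ centralizes $W$ — to collect $t \in T$, $\tau_0^i$, $w_1 \in W$ on the left of $\tau$. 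The genuinely delicate point is absorbing the \emph{right-hand} Weyl element and the inner unipotents: this is where I expect the main obstacle to lie.

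\textbf{Step 3: normalizing the right Weyl element modulo $\tau$, via Lemma~\ref{rmqtau1} and Lemma~\ref{relgruppi}.} The issue is that a general $w \in W$ on the right of $\tau$ can be split as $w = w' w''$ with $w'$ of minimal length in $W_{P(\tau)} w' \in W_{P(\tau)}\backslash W$ (using Lemma~\ref{propunicradice} / the uniqueness of minimal-length representatives in $W_{P(\tau)}\backslash W$) and $w'' \in W_{P(\tau)}$; then $w''$ centralizes $\tau$ by Remark~\ref{rmqnormalizzatore} (since $w'' \in W_{P(\tau)}$ and $\tau$ centralizes $W_{P(\tau)}$), so $\tau w = \tau w' w'' = (\tau w') w''$ and I can push $w''$ to the right, where it gets swallowed into the $W$-part of the right copy of $K = K^1 U T W U K^1$ — but that would put a $W$ to the \emph{right} of $w_2$, contradicting the claimed form. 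The clean way around this is to perform Step 2's refinement of the right-hand $K$ \emph{first}, write $x = k_1 u_1 t \tau_0^i w_1 \tau\, k'$ with $k' \in K$, then apply the Iwahori-type factorization $K = K^1 U^- T W U$; conjugate the $U^-$ part and torus through $\tau$ using Lemma~\ref{rmqtau1}, which tells us $\tau$ conjugates $U_{P(\tau)}^-$ into $K^1$ and (Remark~\ref{rmqnormalizzatore}) normalizes $M_{P(\tau)}$ and $T$, so everything in $K'$ except the $W_{P(\tau)}$-cosets and the "large" unipotent directions collapses into $K^1$, leaving precisely $\tau w_2 u_2 k_2$ with $w_2$ the minimal-length representative in $W_{P(\tau)}\backslash W$. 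Handling the residual unipotent carefully — keeping $u_2 \in U$ and not merely in $K$ — is the bookkeeping crux, but Lemma~\ref{relgruppi}(a), $U_{P(\tau)}^+ \tau K^1 = \tau K^1$, does exactly the needed cancellation on the appropriate side.

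\textbf{Step 4: uniqueness.} Uniqueness of $(i,\tau)$ is elementary divisors as in Step~1. Given $i$ and $\tau$, the double coset $K^1 x K^1$ inside $K \tau_0^i \tau K$ is then governed by the finite double-coset space $K^1 \backslash K \tau_0^i \tau K / K^1$, which via the Iwahori factorization and Lemma~\ref{rmqtau1} is in bijection with $(W/W_{P(\tau)}) \times (\text{finite torus data}) \times (\text{finite unipotent data})$; tracking which of $t$, $w_1$, $w_2$ is pinned down and which of $k_1,k_2,u_1,u_2$ is genuinely free, one reads off that $t$, $w_1$, and $w_2$ (minimal length in its coset) are the uniquely determined invariants while $k_1,k_2 \in K^1$ and $u_1,u_2 \in U$ carry the remaining freedom. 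I expect Step~3 — matching the combinatorics of the right $W$-factor against the parabolic $W_{P(\tau)}$ so that the $\tau$-conjugation lands in $K^1$ exactly — to be the main obstacle; everything else is an orchestration of the already-established Lemmas~\ref{rmqtau1}, \ref{relgruppi} and the minimal-length results of Section~\ref{sezioneradici}.
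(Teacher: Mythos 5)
The paper's proof starts from the extended affine Bruhat--Iwahori decomposition $G = \bigsqcup_{\tilde w\in\widetilde W} K^1 U\tilde w U K^1$, quoted as a known fact. This immediately puts $x$ in the form $k_1 u_1 \tilde w u_2 k_2$ with $\tilde w$ a monomial matrix, and the remaining work is the elementary (but careful) combinatorial task of writing $\tilde w$ uniquely as $t\tau_0^i w_1\tau w_2$ with $w_2$ minimal in $W_{P(\tau)}w_2$ (existence by sorting the diagonal valuations; uniqueness by comparing two such expressions of the same monomial matrix). Your proposal instead starts from the Cartan decomposition $G = K\bm\Delta^{\pm}K$ and tries to refine each $K$-factor via the finite Bruhat decomposition of $GL_m(\frack_D)$, pushing residual factors through $\tau$. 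That is a genuinely different route, and in the form you sketch it does not close.

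The core gap is in Step 3. You assert that after writing $k'\in K$ via $K = K^1 U^- T W U K^1$ and conjugating the $U^-$-, $T$- and $K^1$-parts through $\tau$, ``everything $\ldots$ collapses into $K^1$, leaving precisely $\tau w_2 u_2 k_2$.'' But $\tau$ does not normalize $K^1$: Lemma~\ref{rmqtau1} shows $\tau$ contracts $U^-_{P(\tau)}$ into $K^1$, but by the same token it expands $K^1\cap U^+_{P(\tau)}$ \emph{out of} $K$. Concretely, for $\tau=\mathrm{diag}(1,\varpi^2)$ in $GL_2(D)$ and $u=\mathbb{I}_2+\varpi e_{12}\in K^1\cap U^+$, one has $\tau u\tau^{-1}=\mathbb{I}_2+\varpi^{-1}e_{12}\notin K$. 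So after one pass you are left with a new element of $K$ on the right of $\tau$ whose ``bad'' directions have not shrunk, and the procedure does not obviously terminate. Exactly this bookkeeping is what the affine Bruhat decomposition has already packaged. A parallel issue affects Step 4: ``one reads off'' the uniqueness of $(t,w_1,w_2)$ from a claimed parametrization of $K^1\backslash K\tau_0^i\tau K/K^1$ that is never established; the paper instead proves uniqueness by an explicit matrix computation on the monomial side. If you want to keep the Cartan route, you would essentially have to re-derive the affine Bruhat--Iwahori decomposition along the way; it is cleaner (and is what the paper does) to invoke it at the outset and reduce everything to combinatorics of $\widetilde W$.
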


\begin{proof}  
We know the (Bruhat-Iwahori) decomposition 
\[G=\bigsqcup_{\tilde{w}\in\widetilde{W}} I^1\tilde{w}I^1=\bigsqcup_{\tilde{w}\in\widetilde{W}}K^1U\tilde{w}UK^1\] where $\widetilde{W}$ denotes the group of monomial matrices with coefficients in $\frack_{D}^{\times}\ltimes\varpi^{\Z}$. 
Let us fix $\tilde{w}\in\widetilde{W}$. 
Then there exist $t\in T$, $w\in W$ and $a_j\in\Z$ for every $j\in\{1,\dots,m\}$ such that 
$\tilde{w}=tw \mathrm{diag}(\varpi^{a_1},\dots,\varpi^{a_{m}})$. 
Furthermore, there exists $w_2\in W$ such that 
$\mathrm{diag}(\varpi^{a_1},\dots,\varpi^{a_{m}})=w_2^{-1}\mathrm{diag}(\varpi^{a_{w_2(1)}},\dots,\varpi^{a_{w_2(m)}})w_2$
with $a_{w_2(1)}\leq \cdots\leq a_{w_2(m)}$ and we can choose $w_2$ of minimal length in $W_Pw_2\in W_P\bs W$ where 
$P=\{\alpha_{j,j+1}\in\Sigma\,|\,a_{w_2(j)}=a_{w_2(j+1)}\}$.
Thus we obtain \[\tilde{w}=tww_2^{-1}\tau_0^{a_{w_2(1)}}\prod_{h=1}^{m-1}\tau_h^{a_{w_2(h+1)}-a_{w_2(h)}}w_2.\] 
If we set $w_1=ww_2^{-1}$, $i=a_{w_2(1)}\in\Z$ and $\tau=\prod\tau_h^{a_{w_2(h+1)}-a_{w_2(h)}}\in\bm\Delta$ we obtain
$P=P(\tau)$ and $\tilde{w}=t\tau_0^{i}w_1\tau w_2$. Then we have proved the existence of such a decomposition. 
We now suppose $\tilde{w}=t\tau_0^{i}w_1\tau w_2=t'\tau_0^{i'}w'_1\tau' w'_2$ with $t,t'\in T$, $i,i'\in \Z$, $\tau,\tau'\in\bm\Delta$, 
$w_1,w'_1\in W$, $w_2$ of minimal length in $W_{P(\tau)}w_2$ and $w'_2$ of minimal length in $W_{P(\tau')}w'_2$. 
We have 
\[w_1w_2 (w_2^{-1}w_1^{-1}t\tau_0^{i}w_1w_2w_2^{-1}\tau w_2 )=w'_1w'_2 (w'^{-1}_2w'^{-1}_1t'\tau_0^{i'}w'_1w'_2w'^{-1}_2\tau' w'_2 )\]
and since the two matrices between brackets are diagonal, we obtain $w_1w_2=w'_1w'_2$ and so
\[(w_2^{-1}w_1^{-1}tw_1w_2)(\tau_0^{i}w_2^{-1}\tau w_2 )=(w^{-1}_2w^{-1}_1t'w_1w_2)(\tau_0^{i'}w'^{-1}_2\tau' w'_2).\]
Matrices $w_2^{-1}w_1^{-1}tw_1w_2$ and $w^{-1}_2w^{-1}_1t'w_1w_2$ are in $T$ while matrices $\tau_0^{i}w_2^{-1}\tau w_2$ and $\tau_0^{i'}w'^{-1}_2\tau' w'_2$ have coefficients in $\varpi^{\Z}$.
Hence we obtain $t=t'$ and $\tau_0^{i}w_2^{-1}\tau w_2=\tau_0^{i'}w'^{-1}_2\tau' w'_2$ which implies $\tau_0^{i-i'}\tau=w_2w'^{-1}_2\tau' w'_2w_2^{-1}$.
Since $\tau_0^{i-i'}\tau$ and $\tau'$ are matrices of the form $\mathrm{diag}(\varpi^{b_1},\dots,\varpi^{b_m})$ with $b_1\leq\cdots \leq b_m$, we obtain $w'_2w_2^{-1}\in W_{P(\tau')}$ and so $\tau_0^{i-i'}\tau=\tau'$. 
This implies $i=i'$, $\tau=\tau'$ and so $W_{P(\tau)}w'_2 = W_{P(\tau)}w_2$. 
Finally, since we have chosen $w'_2$ and $w_2$ of minimal length, we obtain $w'_2=w_2$ by lemma \ref{propunicradice}.
\end{proof}

\begin{rmk}\label{rmkscomposizione}
Lemma \ref{scomposizione} also shows that $K=K^1UTWUK^1$ and that for every $k\in K$ there exist $k_1,k_2\in K^1$, $u_1,u_2\in U$ and unique $t\in T$ and $w\in W$ such that $k=k_1u_1twu_2k_2$. 
\end{rmk}

\begin{prop}\label{propgeneratori}
The algebra $\mathscr{H}( G,K^1)$ is generated by $\bm\Omega$ and the subalgebra $\mathscr{H}(K,K^1)$ is generated by 
$f_u,f_t,f_{s_{\alpha}}$ with $u\in U$, $t\in T$ and $\alpha\in \Sigma$.
\end{prop}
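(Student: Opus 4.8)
The plan is to reduce everything to two facts. First, by Lemma \ref{lemmaprodotto}, if $g\in G$ normalizes $K^1$ then $f_{g}f_{h}=f_{gh}$ for every $h\in G$; hence, by an immediate induction, $f_{h_1\cdots h_n}=f_{h_1}\cdots f_{h_n}$ whenever $h_1,\dots,h_n$ all normalize $K^1$, since each partial product again normalizes $K^1$. Second, by Lemma \ref{lemmatau}, $f_{\tau}=\prod_{\alpha\in\Sigma}f_{\tau_\alpha}^{i_\alpha}$ for $\tau=\prod_{\alpha}\tau_\alpha^{i_\alpha}\in\bm\Delta$. Since $\{f_x\,|\,x\in\Xi\}$ is a $\Z$-basis of $\mathscr{H}(G,K^1)$, it suffices to show that each $f_x$ lies in the (unital) $\Z$-subalgebra $\mathscr{A}$ generated by $\bm\Omega$; note $f_{\mathbb{I}_m}\in\bm\Omega$, so $\mathscr{A}$ is unital.

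Given $x\in G$, I would apply Lemma \ref{scomposizione} to write $x=g_0\,\tau\,g_1$ with $\tau\in\bm\Delta$, $g_0=k_1u_1t\tau_0^{i}w_1$ and $g_1=w_2u_2k_2$, where $k_j\in K^1$, $u_j\in U$, $t\in T$, $w_j\in W$ and $i\in\Z$. By Remark \ref{rmqnormalizzatore} the elements $k_j,u_j,t,w_j$ lie in $K$ and hence normalize $K^1$, and $\tau_0$ (and $\tau_0^{-1}$) normalizes $K^1$; therefore $g_0$ and $g_1$ normalize $K^1$. Applying Lemma \ref{lemmaprodotto} twice, once for $g_0$ and once for $g_1$, together with associativity, yields
\[
f_x=f_{g_0\tau g_1}=f_{g_0}\,f_{\tau g_1}=f_{g_0}\,f_{\tau}\,f_{g_1}.
\]
Now $f_{g_0}=f_{k_1}f_{u_1}f_{t}f_{\tau_0}^{i}f_{w_1}$ and $f_{g_1}=f_{w_2}f_{u_2}f_{k_2}$ by the multiplicativity recalled above (here $f_{\tau_0}^{i}$ is a power of $f_{\tau_0^{-1}}$ when $i<0$); since $k_1,k_2\in K^1$, $f_{k_1}$ and $f_{k_2}$ are the identity of the algebra, and all remaining factors lie in $\bm\Omega$, so $f_{g_0},f_{g_1}\in\mathscr{A}$. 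Finally $f_{\tau}\in\mathscr{A}$ by Lemma \ref{lemmatau}. Hence $f_x\in\mathscr{A}$, proving the first assertion.

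For $\mathscr{H}(K,K^1)$ — which is a subalgebra of $\mathscr{H}(G,K^1)$, since $K^1$ is normal in $K$ and every $K^1$-double coset meeting $K$ is contained in $K$ — I would argue in the same way using Remark \ref{rmkscomposizione}: any $k\in K$ is $k_1u_1twu_2k_2$ with $k_j\in K^1$, $u_j\in U$, $t\in T$, $w\in W$, all normalizing $K^1$, so $f_k=f_{u_1}f_tf_wf_{u_2}$; writing $w=s_{\alpha_1}\cdots s_{\alpha_n}$ with $\alpha_j\in\Sigma$ and using that each $s_\alpha\in W\subset K$ normalizes $K^1$ gives $f_w=f_{s_{\alpha_1}}\cdots f_{s_{\alpha_n}}$. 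Since the $f_k$, for $k$ running over a set of representatives of the $K^1$-double cosets contained in $K$, form a $\Z$-basis of $\mathscr{H}(K,K^1)$, the elements $f_u$ ($u\in U$), $f_t$ ($t\in T$) and $f_{s_\alpha}$ ($\alpha\in\Sigma$) generate $\mathscr{H}(K,K^1)$.

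The argument is essentially bookkeeping once Lemmas \ref{scomposizione} and \ref{lemmatau} are available. The single point requiring care — and the reason the $\bm\Delta$-part cannot be split factor by factor like the rest of the decomposition — is that the $\tau_\alpha$ do not normalize $K^1$, so the multiplicativity shortcut for $f$ fails there and must be replaced by the identity $f_\tau=\prod_\alpha f_{\tau_\alpha}^{i_\alpha}$ of Lemma \ref{lemmatau}, whose proof is where the specific structure of $GL_m(D)$ enters.
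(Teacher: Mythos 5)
Your proof is correct and takes essentially the same route as the paper: decompose $x$ via Lemma \ref{scomposizione}, peel off the factors that normalize $K^1$ using Lemma \ref{lemmaprodotto}, and handle the $\bm\Delta$-factor via Lemma \ref{lemmatau}; the second assertion likewise follows from Remark \ref{rmkscomposizione}. You supply a bit more bookkeeping (partial products normalizing $K^1$, the basis argument, why $\mathscr{H}(K,K^1)$ sits inside $\mathscr{H}(G,K^1)$) and you correctly flag that the whole point of Lemma \ref{lemmatau} is that the $\tau_\alpha$ do not normalize $K^1$, but the substance of the argument is the paper's.
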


\begin{proof}
By lemma \ref{scomposizione} for every $x\in  G$ there exists a decomposition $x=k_1u_1t\tau_0^{i}w_1\tau w_2u_2k_2$ with 
$k_1,k_2\in K^1$, $u_1,u_2\in U$, $t\in T$,  $i\in\Z$, $\tau=\prod\tau_{\alpha}^{i_{\alpha}}\in\bm\Delta$ and $w_1,w_2\in W$.
By lemma \ref{lemmaprodotto} and lemma \ref{lemmatau} we have
\begin{equation}\label{prodottogeneratori}
f_x=f_{u_1t\tau_0^{i}w_1}f_{\tau}f_{w_2u_2}=f_{u_1}f_{t}f_{\tau_0}^if_{w_1}\prod_{\alpha\in\Sigma} f_{\tau_{\alpha}}^{i_{\alpha}}f_{w_2}f_{u_2}
\end{equation}
which proves that $\bm\Omega$ generates $\mathscr{H}(G,K^1)$.
Second assertion follows form decomposition $K=K^1UTWUK^1$ and lemma \ref{lemmaprodotto}.
\end{proof}

\subsection{Relations}\label{sezionerelazioni}
In this paragraph we look for some relations among generators of $\mathscr{H}(G,K^1)$ that we have defined in previous paragraph. 

\begin{prop}\label{proptaueu}
Let $\tau\in\bm\Delta$, $u_1\in U_{P(\tau)}^+$ and $u_2\in U_{P(\tau)}^-$. 
Then $f_{u_1} f_{\tau}=f_{\tau}=f_{\tau}f_{u_2}$.
\end{prop}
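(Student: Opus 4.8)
The plan is to use the support computations for products $f_x f_y$ established in Section~\ref{sectionB} together with Lemma~\ref{relgruppi}, exactly as in the proof of Lemma~\ref{lemmatau}. First I would treat the identity $f_{u_1}f_\tau = f_\tau$ for $u_1 \in U_{P(\tau)}^+$. By Lemma~\ref{lemmaprodotto} the support of $f_{u_1}f_\tau$ is $K^1 u_1 K^1 \tau K^1$. Since $u_1 \in U_{P(\tau)}^+ \subset K$ and, by Lemma~\ref{relgruppi}$(a)$, $U_{P(\tau)}^+ \tau K^1 = \tau K^1$, I get $K^1 u_1 K^1 \tau K^1 \subset K^1 U_{P(\tau)}^+ \tau K^1 = K^1 \tau K^1$; conversely $K^1 \tau K^1 \subset K^1 u_1 K^1 \tau K^1$ is automatic, so the support of $f_{u_1}f_\tau$ is exactly $K^1\tau K^1$. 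Hence $f_{u_1}f_\tau = c\, f_\tau$ for some positive integer $c$, and it remains to show $c = 1$.

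To pin down $c$ I would evaluate at $\tau$ using formula~(\ref{eqprodotto2}): $c = (f_{u_1}f_\tau)(\tau) = \bigl|(K^1 u_1 K^1 \cap \tau K^1 \tau^{-1} K^1)/K^1\bigr|$. Now $K^1 u_1 K^1 = K^1$ because $u_1 \in K^1 U_{P(\tau)}^+$? — no, that is false in general, so instead I use that $u_1$ normalizes nothing useful; rather, the cleaner route is to observe $K^1 u_1 K^1 \subseteq K$ (as $u_1 \in K$) and apply Lemma~\ref{relgruppi}$(b)$, which gives $\tau K^1 \tau^{-1} K^1 \cap K = U_{P(\tau)}^+ K^1$. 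Therefore $K^1 u_1 K^1 \cap \tau K^1 \tau^{-1} K^1 = K^1 u_1 K^1 \cap U_{P(\tau)}^+ K^1$, and since $u_1 \in U_{P(\tau)}^+$ this whole set lies in $U_{P(\tau)}^+ K^1$. The point is then that $U_{P(\tau)}^+ K^1 / K^1$ meets $K^1 u_1 K^1$ in a single coset: more precisely $K^1 u_1 K^1 \cap U_{P(\tau)}^+ K^1$, when pushed to $K/K^1 \cong GL_m(\frack_D)$, is the double coset of the image of $u_1$ intersected with the (unipotent) image of $U_{P(\tau)}^+$, which is a single $\overline{K^1}$-coset — i.e. just the single coset $u_1 K^1$ — because the image of $U_{P(\tau)}^+$ is already contained in the relevant unipotent subgroup. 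This yields $c = 1$, hence $f_{u_1}f_\tau = f_\tau$.

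The identity $f_\tau f_{u_2} = f_\tau$ for $u_2 \in U_{P(\tau)}^-$ I would handle symmetrically, using the right-hand statements in Lemma~\ref{relgruppi}: $K^1 \tau U_{P(\tau)}^- = K^1 \tau$ from $(a)$ gives the support containment, and $(b)$ together with the inverse computation gives the multiplicity $1$. Alternatively, one can deduce it from the first identity by applying the anti-involution $\Phi \mapsto \check\Phi$, $\check\Phi(g) = \Phi(g^{-1})$, under which $f_x \mapsto f_{x^{-1}}$ and which sends $U_{P(\tau)}^+$-statements about $\tau^{-1}$ to $U_{P(\tau)}^-$-statements about $\tau$; but since $\tau$ here is a general element of $\bm\Delta$ (not necessarily of the form giving an equality in Lemma~\ref{rmqtau1}) it is cleanest to just repeat the argument.

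The main obstacle I anticipate is the multiplicity computation, i.e. showing the intersection index equals $1$ rather than merely being positive. The subtlety is that $u_1$ need not lie in $K^1$, so $K^1 u_1 K^1$ is a genuine nontrivial double coset, and one must argue that its intersection with $U_{P(\tau)}^+ K^1/K^1$ collapses to one point; the right framework for this is reduction mod $\wp_D$ to $GL_m(\frack_D)$, where $\overline{U_{P(\tau)}^+}$ is a unipotent subgroup and $\overline{K^1}$ is trivial, so that $U_{P(\tau)}^+ K^1 / K^1 \cong \overline{U_{P(\tau)}^+}$ and the intersection $K^1 u_1 K^1 \cap U_{P(\tau)}^+ K^1$ modulo $K^1$ is literally $\{u_1 K^1\}$. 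Once that bookkeeping is set up the rest is a direct application of the lemmas already proved.
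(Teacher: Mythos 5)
Your proof is correct, but it takes a longer route than the paper because you explicitly reject the observation that is the whole point. You write ``$u_1$ normalizes nothing useful'' --- but $K^1$ is the pro-$p$-radical of $K$, hence normal in $K$, and $u_1\in U_{P(\tau)}^+\subset K$, so $u_1$ normalizes $K^1$. This gives $K^1u_1K^1=u_1K^1$, a single left $K^1$-coset, and the multiplicity computation is then immediate: by formula~(\ref{eqprodotto2}), $(f_{u_1}f_\tau)(\tau)=|(u_1K^1\cap\tau K^1\tau^{-1}K^1)/K^1|$, and since $u_1K^1$ is one coset and (by Lemma~\ref{relgruppi}$(b)$, or just by $u_1K^1\subset U_{P(\tau)}^+K^1=\tau K^1\tau^{-1}K^1\cap K$) is contained in $\tau K^1\tau^{-1}K^1$, the intersection is exactly $u_1K^1$ and the count is $1$. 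This is exactly the paper's proof, and the same normalization fact handles $(f_\tau f_{u_2})(\tau)=|(K^1\tau K^1\cap\tau u_2^{-1}K^1)/K^1|=1$ for the second identity.

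Your alternative --- restricting to $K$ via Lemma~\ref{relgruppi}$(b)$, then reducing modulo $\wp_D$ to $K/K^1\cong GL_m(\frack_D)$ where $\overline{K^1}$ is trivial --- does get you there: the intersection $K^1u_1K^1\cap U_{P(\tau)}^+K^1$ is right-$K^1$-invariant, nonempty (it contains $u_1$), and its image in $K/K^1$ is the singleton $\{\bar u_1\}$, forcing it to be the single coset $u_1K^1$. But notice that the step ``the image of $K^1u_1K^1$ in $K/K^1$ is a single point'' is itself nothing but the statement $K^1u_1K^1=u_1K^1$, i.e.\ that $u_1$ normalizes $K^1$. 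So the reduction-mod-$\wp_D$ detour rediscovers the normalization fact rather than avoiding it, at the cost of an extra invocation of Lemma~\ref{relgruppi}$(b)$ that the paper only needs to place $u_1K^1$ inside $\tau K^1\tau^{-1}K^1$. The argument is sound, just not the one-line proof available once you recall $K^1\trianglelefteq K$.
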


\begin{proof}
Elements $u_1$ and $u_2$ normalize $K^1$ and so by lemma \ref{relgruppi}$(a)$ the support of $f_{u_1}f_{\tau}$ and of $f_{\tau} f_{u_2}$ is $K^1\tau K^1$. 
We obtain
$(f_{u_1}f_{\tau})(\tau)=\left|(u_1K^1\cap  \tau K^1\tau^{-1}K^1)/K^1 \right|=1$ and
$(f_{\tau}f_{u_2})(\tau)=\left|(K^1\tau K^1\cap  \tau u_2^{-1} K^1)/K^1 \right|=1$ and hence the result.
\end{proof}

For every $\alpha=\alpha_{i,i+1}\in \Sigma$ and $w\in W$ we consider the following sets:
\begin{enumerate}[$\bullet$]
\item $A(w,\alpha)=\{w(j)\,|\,i+1\leq j\leq m\}\subset \{1,\dots,m\}$,
\item $B(w,\alpha)=\{w(j)-1\,|\,i+1\leq j\leq m\}\subset \{0,\dots,m-1\}$,
\item $P'(w,\alpha)=A(w,\alpha)\setminus B(w,\alpha)$,
\item $P(w,\alpha)=\{\alpha_{i,i+1}\in\Sigma\,|\, i\in P'(w,\alpha)\}$,
\item $Q(w,\alpha)=B(w,\alpha)\setminus A(w,\alpha)$.
\end{enumerate}
We remark that $\tau_{P'(w,\alpha)}=\tau_{P(w,\alpha)}$ because $0\notin P'(w,\alpha)$ and $\tau_m=\mathbb{I}_{m}$.

\smallskip
Let $\alpha=\alpha_{i,i+1}\in \Sigma$, $w'\in W$ and $w$ of minimal length in $w'W_{\widehat {\alpha}}\in W/W_{\widehat {\alpha}}$. 
Then we have 
\[w'\tau_i w'^{-1}=w\tau_iw^{-1}=\prod_{h=i+1}^{m}w\tau_{h-1}\tau_{h}^{-1}w^{-1}=\prod_{h=i+1}^{m}\tau_{w(h)-1}\tau_{w(h)}^{-1}.\]
We obtain  $w\tau_\alpha w^{-1}=\tau_{P'(w,\alpha)}^{-1}\tau_{Q(w,\alpha)}=\tau_{P(w,\alpha)}^{-1}\tau_{Q(w,\alpha)}$. 
This equality suggest us to study the following product in $\mathscr{H}(G,K^1)$:
\begin{equation}\label{rel10}
f_{\tau_{P(w,\alpha)}}f_wf_{\tau_\alpha}f_{w^{-1}}
\end{equation}
and its relation with $f_{\tau_{Q(w,\alpha)}}$.

\begin{lemma}\label{lemmaprodottotau0}
Let $P\subset\Sigma$.  
If $w$ is of minimal length in $W_Pw\in W_P\bs W$ 
then $P\cap P(w,\alpha)=\emptyset$ for every $\alpha\in \Sigma$.
\end{lemma}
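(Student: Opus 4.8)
The plan is to translate the purely set-theoretic definitions of $A(w,\alpha)$, $B(w,\alpha)$ and $P'(w,\alpha)$ into statements about the one-line notation of the permutation $w$, and then to feed in the minimal-length hypothesis through Proposition \ref{proplongmin}. Writing $\alpha=\alpha_{k,k+1}$, one has $A(w,\alpha)=\{w(k+1),\dots,w(m)\}$ and $B(w,\alpha)=\{\,s-1\mid s\in A(w,\alpha)\,\}$, so that an index $i$ belongs to $P'(w,\alpha)=A(w,\alpha)\setminus B(w,\alpha)$ precisely when $w^{-1}(i)>k$ while $w^{-1}(i+1)\le k$; in words, $i$ appears among the last $m-k$ entries of $w$ but $i+1$ does not. (Note that $A(w,\alpha)\subset\{1,\dots,m\}$, so $0$ never arises, as already observed in the remark before the lemma.)

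Next I would argue by contradiction. Fix $\alpha\in\Sigma$ and a root $\beta=\alpha_{i,i+1}\in P$, and suppose $\beta\in P(w,\alpha)$, i.e. $i\in P'(w,\alpha)$. By the translation above this forces $w^{-1}(i+1)\le k<k+1\le w^{-1}(i)$, hence $w^{-1}(i+1)<w^{-1}(i)$, that is $w^{-1}\beta=\alpha_{w^{-1}(i),\,w^{-1}(i+1)}\in\bm\Phi^-$. On the other hand, since $w$ has minimal length in $W_Pw$, its inverse $w^{-1}$ has minimal length in $w^{-1}W_P$ (the remark following Lemma \ref{propunicradice}), so Proposition \ref{proplongmin} gives $w^{-1}\gamma\in\bm\Phi^+$ for every $\gamma\in\bm\Phi_P^+$, and in particular $w^{-1}\beta\in\bm\Phi^+$ because $\beta\in P\subset\bm\Phi_P^+$. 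This contradiction yields $i\notin P'(w,\alpha)$, hence $\beta\notin P(w,\alpha)$; since $\beta\in P$ and $\alpha\in\Sigma$ were arbitrary, $P\cap P(w,\alpha)=\emptyset$ for every $\alpha\in\Sigma$.

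The main thing to be careful about is the indexing in the first step: one must not confuse the index $k$ of $\alpha=\alpha_{k,k+1}$, which governs the truncation $\{k+1,\dots,m\}$ defining $A$ and $B$, with the index $i$ of the test root $\beta\in P$. Beyond that bookkeeping the argument is just a comparison of three integers together with (the dual of) Proposition \ref{proplongmin}; no further property of $GL_m(D)$ or of the Hecke algebra is needed, since the statement lives entirely inside the Weyl group $W$.
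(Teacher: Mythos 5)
Your argument is correct and is essentially identical to the paper's own proof: both pass to $w^{-1}$, invoke Proposition \ref{proplongmin} to get $w^{-1}\bm\Phi_P^+\subset\bm\Phi^+$, and unwind the definition of $P'(w,\alpha)$ to show that $w^{-1}$ sends any simple root indexed by $P'(w,\alpha)$ into $\bm\Phi^-$, forcing disjointness. The only differences are cosmetic: you phrase it as a contradiction and swap the roles of the index letters $i$ and $k$ relative to the paper.
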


\begin{proof}
We fix $\alpha=\alpha_{i,i+1}\in \Sigma$. 
The element $w^{-1}$ is of minimal length in $w^{-1}W_P\in W/W_P$ and then by proposition \ref{proplongmin} we obtain $w^{-1}\bm\Phi_P^+\subset\bm\Phi^+$. 
Moreover for every $\alpha_{k,k+1}\in P(w,\alpha)$ we have $w^{-1}(k)\geq i+1>i\geq w^{-1}(k+1)$ and then
$w^{-1}\alpha_{k,k+1}\in \bm\Phi^-$.  
This implies $P\cap P(w,\alpha)=\emptyset$.
\end{proof}

\begin{rmk}
More generally, we are interested in studying the product $f_\tau f_{w'} f_{\tau_\alpha}$ for every $\tau\in\bm\Delta$, $w'\in W$ and $\alpha\in\Sigma$. 
By remark \ref{rmqnormalizzatore} we have  $f_\tau f_{w'} f_{\tau_\alpha}=f_{w_1}f_\tau f_{w} f_{\tau_\alpha}f_{w_2}$ with 
$w_1\in W_{P(\tau)}$, $w_2\in W_{\widehat{\alpha}}$ and 
$w$ of minimal length in $W_{P(\tau)}w$ and in $wW_{\widehat{\alpha}}$ such that $w'=w_1ww_2$.
Lemma \ref{lemmaprodottotau0} implies that $P(\tau)\cap P(w,\alpha)=\emptyset$ and so there exists $\tau'\in\bm\Delta$ such that
$f_\tau=f_{\tau'}f_{\tau_{P(w,\alpha)}}$.
Thus we obtain $f_\tau f_{w'} f_{\tau_\alpha}=f_{w_1}f_{\tau'}(f_{\tau_{P(w,\alpha)}} f_{w} f_{\tau_\alpha}f_{w^{-1}})f_{ww_2}$ and so we have reduced the study of $f_\tau f_{w'} f_{\tau_\alpha}$ to the study of (\ref{rel10}). 
\end{rmk}

Now we state and prove two lemmas that allow us to compute the support of (\ref{rel10}).

\begin{lemma}\label{lemmaprodottotau1}
Let $\alpha\in\Sigma$, $w\in W$ and $P\subset\Sigma$. 
Then \[K^1\tau_P K^1 w\tau_{\alpha}w^{-1}K^1=K^1\tau_Pw\tau_{\alpha}w^{-1}(U_{\widehat P}^+\cap wU_{\widehat\alpha}^-w^{-1})K^1.\]
\end{lemma}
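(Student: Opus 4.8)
The plan is to compute the set $K^1\tau_P K^1 gK^1$ with $g:=w\tau_\alpha w^{-1}$ by carrying the middle copy of $K^1$ across $g$. Since $\tau_\alpha$ is diagonal with entries in $\{1,\varpi\}$, the matrix $g$ is diagonal with $j$-th entry $\varpi$ when $j\in A(w,\alpha)$ and $1$ otherwise. The first step is to describe $V:=U_{\widehat P}^+\cap wU_{\widehat\alpha}^-w^{-1}$ concretely: from $wU_\gamma w^{-1}=U_{w\gamma}$ and the descriptions of $\bm\Psi_{\widehat\alpha}^-$ and $\bm\Psi_{\widehat P}^+$ one finds $w\bm\Psi_{\widehat\alpha}^-=\{\alpha_{pq}\,|\,p\in A(w,\alpha),\ q\notin A(w,\alpha)\}$, hence $V=\prod_{\beta\in S}U_\beta$ with
\[S=\{\alpha_{jk}\in\bm\Psi_{\widehat P}^+\,|\,j\in A(w,\alpha),\ k\notin A(w,\alpha)\}.\]
No two roots of $S$ add to a root, so $V$ is a commutative subgroup of $K$ formed by the matrices $\mathbb{I}_m+N$ with $N$ supported on the positions of $S$ and entries in $\ent_D$; I write $\mathfrak v$ for the $\ent_D$-span of those positions.

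For the inclusion $\supseteq$ I would note that conjugation by $g$ multiplies the $(j,k)$-entry by $g_jg_k^{-1}$, which for $\alpha_{jk}\in S$ equals $\varpi$ and thus sends $\ent_D$ into $\wp_D$; hence $gVg^{-1}\subseteq\mathbb{I}_m+M_m(\wp_D)=K^1$, so $gV\subseteq K^1g$ and $K^1\tau_PgVK^1\subseteq K^1\tau_PK^1gK^1$.

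For the inclusion $\subseteq$ I would proceed in two steps. First, using the factorization $K^1=(K^1\cap U_{\widehat P}^-)(K^1\cap M_{\widehat P})(K^1\cap U_{\widehat P}^+)$ (note $P(\tau_P)=\widehat P$) together with Lemma \ref{rmqtau1} and remark \ref{rmqnormalizzatore}, one checks that $\tau_Pk\in K^1\tau_P(K^1\cap U_{\widehat P}^+)$ for every $k\in K^1$, so $K^1\tau_PK^1gK^1\subseteq K^1\tau_P(K^1\cap U_{\widehat P}^+)gK^1$. Second, I would show $(K^1\cap U_{\widehat P}^+)g\subseteq gVK^1$, equivalently $g^{-1}(K^1\cap U_{\widehat P}^+)g\subseteq VK^1$: writing $u=\mathbb{I}_m+M$ with $M$ supported on $\bm\Psi_{\widehat P}^+$ and entries in $\wp_D$, conjugation by $g$ multiplies the $(j,k)$-entry by $g_j^{-1}g_k$, which is $\varpi^{-1}$ precisely when $\alpha_{jk}\in S$ (so the entry lands in $\ent_D$) and lies in $\ent_D$ otherwise (so the entry stays in $\wp_D$); hence $g^{-1}ug\in\mathbb{I}_m+(\mathfrak v+M_m(\wp_D))$. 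Since $VK^1=\mathbb{I}_m+(\mathfrak v+M_m(\wp_D))$ — which follows from $(\mathbb{I}_m+N_0)(\mathbb{I}_m+M_0)=\mathbb{I}_m+N_0+(\mathbb{I}_m+N_0)M_0$ and $(\mathbb{I}_m+N_0)M_m(\wp_D)=M_m(\wp_D)$ — this gives the desired inclusion, and combining the two steps yields $K^1\tau_PK^1gK^1\subseteq K^1\tau_PgVK^1$.

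The main obstacle is the second inclusion, and within it the entry-by-entry analysis of $g^{-1}(K^1\cap U_{\widehat P}^+)g$: this has to be carried out carefully in the possibly non-commutative ring $\ent_D$ (using that $\wp_D$ is a two-sided ideal), and one must verify that the positions whose entries escape $\wp_D$ under conjugation by $g$ are exactly those of $S$, so that what appears on the right of $g$ is precisely $V$ and not a larger group. The combinatorial identification of $S$ is elementary but requires attention to the indices defining $A(w,\alpha)$.
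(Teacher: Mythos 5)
Your argument is correct, but it proceeds by a route that is noticeably different from the paper's. The paper obtains the equality in one chain: it invokes Lemma~\ref{relgruppi}$(c)$ to rewrite the middle factor as $K^1\tau_\alpha K^1=\tau_\alpha U_{\widehat\alpha}^-K^1$, conjugates by $w$ to place $wU_{\widehat\alpha}^-w^{-1}=\prod_{\alpha'\in w\bm\Psi_{\widehat\alpha}^-}U_{\alpha'}$ to the right of $g$, and then absorbs into the left $K^1$ exactly those $U_{\alpha'}$ with $\alpha'\in w\bm\Psi_{\widehat\alpha}^-\setminus\bm\Psi_{\widehat P}^+$ by showing $(\tau_P g)U_{\alpha'}(\tau_P g)^{-1}\subseteq K^1$; what survives is precisely $V=U_{\widehat P}^+\cap wU_{\widehat\alpha}^-w^{-1}$. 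You instead prove the two inclusions separately and push the middle $K^1$ across $\tau_P$ (via the Iwahori factorization $K^1=(K^1\cap U_{\widehat P}^-)(K^1\cap M_{\widehat P})(K^1\cap U_{\widehat P}^+)$ and Lemma~\ref{rmqtau1}) rather than across $\tau_\alpha$, then carry the residual factor $K^1\cap U_{\widehat P}^+$ across $g$ by an explicit entry-by-entry analysis of $g^{-1}(\cdot)g$ in $M_m(\ent_D)$; the reverse inclusion you get from $gVg^{-1}\subseteq K^1$. The paper's proof is shorter because it reuses Lemma~\ref{relgruppi}$(c)$ (itself proved from the same Iwahori factorization) and stays at the level of root subgroups, while your proof is more self-contained and makes the role of the set $S=\bm\Psi_{\widehat P}^+\cap w\bm\Psi_{\widehat\alpha}^-$ and of the two-sidedness of $\wp_D$ completely explicit. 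Both are valid; your combinatorial description of $S$ via $A(w,\alpha)$ and the observation that no two roots of $S$ sum to a root (so $V$ is abelian, hence equals the set $\mathbb{I}_m+\mathfrak v$) are correct and do what you need.
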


\begin{proof} By lemma \ref{relgruppi}$(c)$ we have $K^1\tau_{\alpha}K^1=\tau_{\alpha}U_{\widehat\alpha}^-K^1$ and so we obtain 
$K^1\tau_P K^1 w\tau_{\alpha}w^{-1}K^1=K^1\tau_P wK^1\tau_{\alpha}K^1w^{-1}=K^1\tau_P w\tau_{\alpha}U_{\widehat\alpha}^-K^1w^{-1}
=K^1\tau_P w\tau_{\alpha}w^{-1}\Big(\prod_{\alpha'\in w\mathbf{\Psi}_{\widehat\alpha}^-}U_{\alpha'}\Big)K^1$.
If $\alpha'\in w\mathbf{\Psi}_{\widehat\alpha}^-$ then by lemma \ref{rmqtau1} we have
$w\tau_{\alpha}w^{-1} U_{\alpha'} (w\tau_{\alpha}w^{-1})^{-1}=w(\tau_{\alpha} U_{w^{-1}\alpha'}\tau_{\alpha}^{-1})w^{-1}=
U_{\alpha'}\cap K^1$
and so by remark \ref{rmqnormalizzatore} and lemma \ref{rmqtau1} if 
$\alpha'$ is in $w\mathbf{\Psi}_{\widehat\alpha}^-$ and not in $\mathbf{\Psi}_{\widehat P}^+$
then 
$(\tau_P w\tau_{\alpha}w^{-1}) U_{\alpha'}(\tau_P w\tau_{\alpha}w^{-1})^{-1}$ is contained in $K^1$.
Thus we obtain 
$K^1\tau_P K^1 w\tau_{\alpha}w^{-1}K^1=K^1\tau_P w\tau_{\alpha}w^{-1}\Big(\prod_{\alpha'\in w\mathbf{\Psi}_{\widehat\alpha}^-\cap 
\mathbf{\Psi}_{\widehat P}^+}U_{\alpha'}\Big)K^1$
and hence the result.
\end{proof}

\begin{lemma}\label{lemmaprodottotau2}
Let $\alpha=\alpha_{i,i+1}\in\Sigma$, $w\in W$ and $P=P(w,\alpha)$. 
\begin{enumerate}[$(a)$]
\item The sets 
$\mathbf{\Phi}_{\widehat P}^+\cap w \mathbf{\Psi}_{\widehat\alpha}^-$ and $\mathbf{\Phi}_{\widehat P}^-\cap w \mathbf{\Psi}_{\widehat\alpha}^+$
are empty.
\item We have $U_{\widehat P}^+\cap wU_{\widehat\alpha}^-w^{-1}=U\cap wU_{\widehat\alpha}^-w^{-1}$
and $U_{\widehat P}^-\cap wU_{\widehat\alpha}^+w^{-1}=U^-\cap wU_{\widehat\alpha}^+w^{-1}$.
\item If in addition $w$ is of minimal length in $w W_{\widehat\alpha}\in W/ W_{\widehat\alpha}$ then 
$U\cap wU_{\widehat\alpha}^-w^{-1}=U\cap wU^-w^{-1}$ and
$U^-\cap wU_{\widehat\alpha}^+w^{-1}=U^-\cap wUw^{-1}$.
\end{enumerate}
\end{lemma}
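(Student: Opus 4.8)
The plan is to treat each of the three statements in turn, all of which are ultimately combinatorial facts about the permutation action of $W$ on $\bm\Phi$, with the one analytic ingredient being Lemma \ref{rmqtau1}. For part $(a)$, I would argue by direct inspection of the roots. Recall $P = P(w,\alpha)$ is defined through $P'(w,\alpha) = A(w,\alpha)\setminus B(w,\alpha)$ where $A(w,\alpha) = \{w(j)\mid i+1\le j\le m\}$ and $B(w,\alpha) = \{w(j)-1\mid i+1\le j\le m\}$. A root $\beta\in w\bm\Psi_{\widehat\alpha}^{\pm}$ has the form $\beta = \alpha_{w(h),w(k)}$ with $h\le i < k$ (for the $+$ case) and the sign of $\beta$ is $+$ iff $w(h)<w(k)$. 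The key point: if $\beta = \alpha_{a,b}\in\bm\Phi_{\widehat P}$, i.e. $a,b$ lie in the same block of the partition of $\{1,\dots,m\}$ cut out by $P'(w,\alpha)$, then $\{a,\dots,b-1\}$ (or $\{b,\dots,a-1\}$) contains no element of $P'(w,\alpha)$, hence every such element lies in $B(w,\alpha)$ or not in $A(w,\alpha)$; tracking which of $a$ and $b$ lie in $A(w,\alpha) = \{w(j)\mid j>i\}$ then forces a contradiction with $\beta\in w\bm\Psi_{\widehat\alpha}^{\mp}$. I would spell this out as: $\beta\in\bm\Phi_{\widehat P}^+\cap w\bm\Psi_{\widehat\alpha}^-$ would mean $a<b$ while $\beta = \alpha_{w(h),w(k)}$ with $h\le i<k$ and $w(h)>w(k)$, i.e. $a = w(k)\in A$, $b = w(h)$; since $a<b$ and $a,b$ in the same $\widehat P$-block, none of $a,a+1,\dots,b-1$ is in $P'$, but $a = w(k)\in A$ forces (by a telescoping/boundary argument on where blocks of $A$ begin) a witness in $P' = A\setminus B$, contradiction. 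The symmetric case is identical.

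For part $(b)$, the inclusions $\subseteq$ in both identities are trivial since $U_{\widehat P}^+\subseteq U$ and $U_{\widehat P}^-\subseteq U^-$ by the description $U_{\widehat P}^+ = \prod_{\beta\in\bm\Psi_{\widehat P}^+}U_\beta$ together with $U = \prod_{\beta\in\bm\Phi^+}U_\beta$. For $\supseteq$: any $u\in U\cap wU_{\widehat\alpha}^-w^{-1}$ is a product of $U_\beta$'s with $\beta\in\bm\Phi^+\cap w\bm\Psi_{\widehat\alpha}^-$ (using that $wU_{\widehat\alpha}^-w^{-1} = \prod_{\beta\in w\bm\Psi_{\widehat\alpha}^-}U_\beta$ and intersecting with the upper unipotent group picks out exactly the positive ones). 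Now $\bm\Phi^+ = \bm\Phi_{\widehat P}^+\sqcup\bm\Psi_{\widehat P}^+$, and part $(a)$ says $\bm\Phi_{\widehat P}^+\cap w\bm\Psi_{\widehat\alpha}^- = \emptyset$, so in fact $\bm\Phi^+\cap w\bm\Psi_{\widehat\alpha}^- = \bm\Psi_{\widehat P}^+\cap w\bm\Psi_{\widehat\alpha}^-\subseteq \bm\Psi_{\widehat P}^+$, whence $u\in U_{\widehat P}^+$. The second identity follows the same way using $\bm\Phi_{\widehat P}^-\cap w\bm\Psi_{\widehat\alpha}^+ = \emptyset$ and $\bm\Phi^- = \bm\Phi_{\widehat P}^-\sqcup\bm\Psi_{\widehat P}^-$.

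For part $(c)$, assume $w$ has minimal length in $wW_{\widehat\alpha}$. I want $U\cap wU_{\widehat\alpha}^-w^{-1} = U\cap wU^-w^{-1}$. The inclusion $\subseteq$ is clear since $U_{\widehat\alpha}^-\subseteq U^-$. For $\supseteq$, it suffices to show $\bm\Phi^+\cap w\bm\Phi^- = \bm\Phi^+\cap w\bm\Psi_{\widehat\alpha}^-$, equivalently $\bm\Phi^+\cap w\bm\Phi_{\widehat\alpha}^- = \emptyset$; but $w$ of minimal length in $wW_{\widehat\alpha}$ means precisely (by Proposition \ref{proplongmin}, applied to $P = \widehat\alpha$) that $w\bm\Phi_{\widehat\alpha}^+\subseteq\bm\Phi^+$, hence $w\bm\Phi_{\widehat\alpha}^-\subseteq\bm\Phi^-$, giving the desired emptiness. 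The symmetric identity $U^-\cap wU_{\widehat\alpha}^+w^{-1} = U^-\cap wUw^{-1}$ follows from $w\bm\Phi_{\widehat\alpha}^+\subseteq\bm\Phi^+$ directly, i.e. $\bm\Phi^-\cap w\bm\Phi_{\widehat\alpha}^+ = \emptyset$, so $\bm\Phi^-\cap w\bm\Phi^+ = \bm\Phi^-\cap w\bm\Psi_{\widehat\alpha}^+$. Throughout I would freely use the standard fact that for any subset $S\subseteq\bm\Phi^+$ closed in the relevant sense the product $\prod_{\beta\in S}U_\beta$ is a group and that $U_\beta\subseteq U$ iff $\beta\in\bm\Phi^+$, and that conjugation by $w$ sends $U_\beta$ to $U_{w\beta}$ (Remark \ref{rmqlunghezza}).

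The main obstacle is part $(a)$: it is the only genuinely combinatorial claim, and getting the bookkeeping right between the index set $A(w,\alpha)$, the shifted set $B(w,\alpha)$, the "block boundaries" encoded by $P'(w,\alpha)$, and the sign condition coming from $w\bm\Psi_{\widehat\alpha}^{\pm}$ requires care; parts $(b)$ and $(c)$ are then formal consequences of $(a)$ plus Proposition \ref{proplongmin}. I would organize the proof of $(a)$ around the observation that $P'(w,\alpha)$ records exactly the integers $k$ with $k\in A(w,\alpha)$ but $k+1\notin A(w,\alpha)\cup\{k\}$... more precisely $k\in A$, $k\notin B$, i.e. $k$ is the top of a "run" of consecutive integers sitting inside $A(w,\alpha)$, so that two integers in the same $\widehat P$-block are connected by a path avoiding all run-tops, which pins down the structure needed for the contradiction.
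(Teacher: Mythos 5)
Your proposal is correct and follows essentially the same route as the paper's proof: part $(a)$ by writing the hypothetical root as a telescoping sum of consecutive simple roots $\alpha_{c,c+1}$ not in $P(w,\alpha)$ and inductively propagating membership in $A(w,\alpha)$ from the lower endpoint (which is in $A$) to the upper endpoint (which cannot be), and parts $(b)$, $(c)$ as formal root-set consequences of $(a)$ and Proposition \ref{proplongmin}. One small caution in your write-up of $(a)$: you write ``$\beta=\alpha_{w(h),w(k)}$ with $h\le i<k$ and $w(h)>w(k)$,'' but if $\gamma=\alpha_{hk}$ with $h\le i<k$ then $\gamma\in\bm\Psi_{\widehat\alpha}^+$, so $\beta\in w\bm\Psi_{\widehat\alpha}^-$ should be $\beta=w(-\alpha_{hk})=\alpha_{w(k),w(h)}$; your subsequent identification $a=w(k)$, $b=w(h)$ and the run-of-$A$ argument are consistent with this corrected form, so the slip is purely notational.
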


\begin{proof} 
In order to prove $(a)$ we suppose by contradiction that there exists $\alpha'\in\mathbf{\Phi}_{\widehat P}^+\cap w \mathbf{\Psi}_{\widehat\alpha}^-$. 
Since $\alpha'\in w \mathbf{\Psi}_{\widehat\alpha}^-$, we have $\alpha'=\alpha_{w(h)w(k)}$ for some $i+1\leq h\leq m$ and $1\leq k\leq i$ and 
since $\alpha'\in \mathbf{\Phi}_{\widehat P}^+$ we have $\alpha_{w(h)w(k)}=\alpha_{w(h),w(h)+1}+\cdots+\alpha_{w(k)-1,w(k)}$ with 
$\alpha_{w(h),w(h)+1},\cdots,\alpha_{w(k)-1,w(k)}\notin P(w,\alpha)$ and so $w(h),\dots,w(k)-1\notin P'(w,\alpha)$. 
By construction, we have $w(h)\in A(w,\alpha)$ which implies $w(h)\in B(w,\alpha)$ and $w(h)+1\in A(w,\alpha)$. 
By induction we obtain $w(k)\in A(w,\alpha)$ which contradicts $1\leq k\leq i$. 
So we have proved that $\mathbf{\Phi}_{\widehat P}^+\cap w \mathbf{\Psi}_{\widehat\alpha}^-$ is empty. 
Furthermore, we remark that $\mathbf{\Phi}_{\widehat P}^-\cap w \mathbf{\Psi}_{\widehat\alpha}^+=-(\mathbf{\Phi}_{\widehat P}^+\cap w 
\mathbf{\Psi}_{\widehat\alpha}^-)$.
To prove $(b)$ we observe that by $(a)$ we have  
$\mathbf{\Psi}_{\widehat P}^+\cap w \mathbf{\Psi}_{\widehat\alpha}^-=\mathbf{\Phi}^+\cap w \mathbf{\Psi}_{\widehat\alpha}^-$ and
$\mathbf{\Psi}_{\widehat P}^-\cap w \mathbf{\Psi}_{\widehat\alpha}^+=\mathbf{\Phi}^-\cap w \mathbf{\Psi}_{\widehat\alpha}^+$. 
Thus we obtain
 $U_{\widehat P}^+\cap wU_{\widehat\alpha}^-w^{-1}=U\cap wU_{\widehat\alpha}^-w^{-1}$ and 
 $U_{\widehat P}^-\cap wU_{\widehat\alpha}^+w^{-1}=U^-\cap wU_{\widehat\alpha}^+w^{-1}$.
Finally to prove $(c)$ we remark that by proposition \ref{proplongmin} we have $w\mathbf{\Phi}_{\widehat\alpha}^-\subset \bm\Phi^-$ and 
$w\mathbf{\Phi}_{\widehat\alpha}^+\subset \bm\Phi^+$. 
We obtain 
$\bm\Phi^+\cap w\bm\Psi_{\widehat\alpha}^-=\bm\Phi^+\cap w\bm\Phi^-$ and 
$\bm\Phi^-\cap w\bm\Psi_{\widehat\alpha}^+=\bm\Phi^-\cap w\bm\Phi^+$
 and hence the result.
\end{proof}

We now have all the tools necessary to find the relation between (\ref{rel10}) and $f_{\tau_{Q(w,\alpha)}}$.

\begin{prop}\label{proprel9}
Let $\alpha\in\Sigma$ and $w$ be of minimal length in $wW_{\widehat{\alpha}}\in W/W_{\widehat{\alpha}}$. 
Then we have
\[f_{\tau_{P(w,\alpha)}}f_wf_{\tau_\alpha}f_{w^{-1}}=q^{\ell(w)}f_{\tau_{Q(w,\alpha)}}\sum_{u}f_u.\]
where $u$ describes a system of representatives of $(U\cap wU^-	w^{-1})K^1/K^1$ in $U\cap wU^-	w^{-1}$.
\end{prop}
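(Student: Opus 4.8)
Since $W\subset K$ and $K^1$ is normal in $K$, every element of $W$ normalizes $K^1$, so Lemma~\ref{lemmaprodotto} gives $f_wf_{\tau_\alpha}f_{w^{-1}}=f_{w\tau_\alpha w^{-1}}$. Write $y=w\tau_\alpha w^{-1}$, $P=P(w,\alpha)$, $Q=Q(w,\alpha)$, $V=U\cap wU^-w^{-1}$. By the identity $w\tau_\alpha w^{-1}=\tau_P^{-1}\tau_Q$ established just before the statement we have $\tau_Q=\tau_Py$ and $y^{-1}=\tau_P\tau_Q^{-1}$, so the claim reads $f_{\tau_P}f_y=q^{\ell(w)}f_{\tau_Q}\sum_uf_u$; and since each $u\in V$ normalizes $K^1$, Lemma~\ref{lemmaprodotto} gives $f_{\tau_Q}\sum_uf_u=\sum_uf_{\tau_Qu}$. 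So it suffices to prove $f_{\tau_P}f_y=q^{\ell(w)}\sum_uf_{\tau_Qu}$, with the double cosets $K^1\tau_QuK^1$ pairwise distinct as $u$ runs over the chosen transversal.

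First I would compute the support. By Lemma~\ref{lemmaprodottotau1} it equals $K^1\tau_PK^1yK^1=K^1\tau_Py\,(U_{\widehat P}^+\cap wU_{\widehat\alpha}^-w^{-1})K^1$, and since $\tau_Py=\tau_Q$ and, by Lemma~\ref{lemmaprodottotau2}$(b)$,$(c)$, $U_{\widehat P}^+\cap wU_{\widehat\alpha}^-w^{-1}=V$, the support is $K^1\tau_QVK^1$; in particular every double coset in it is $K^1\tau_QuK^1$ for some transversal $u$. Next I would compute the structure constants: by formula~(\ref{eqprodotto2}), $(f_{\tau_P}f_y)(\tau_Qu)=\big|(K^1\tau_PK^1\cap\tau_QuK^1y^{-1}K^1)/K^1\big|$. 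The key observation is that $yuy^{-1}\in K^1$ for $u\in V$: writing $u=wvw^{-1}$ with $v\in U_{\widehat\alpha}^-$ (allowed since $V=U\cap wU_{\widehat\alpha}^-w^{-1}$ by Lemma~\ref{lemmaprodottotau2}$(c)$), Lemma~\ref{rmqtau1} gives $yuy^{-1}=w(\tau_\alpha v\tau_\alpha^{-1})w^{-1}\in w(U_{\widehat\alpha}^-\cap K^1)w^{-1}=K^1$. Hence $\tau_QuK^1y^{-1}K^1=\tau_QK^1y^{-1}K^1$, so the constant is independent of $u$; left--translating the intersection by $\tau_P^{-1}$ and using $\tau_Q=\tau_Py$ it becomes $\big|(\tau_P^{-1}K^1\tau_PK^1\cap yK^1y^{-1}K^1)/K^1\big|$. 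Now $yK^1y^{-1}K^1=w(\tau_\alpha K^1\tau_\alpha^{-1}K^1)w^{-1}=wU_{\widehat\alpha}^+w^{-1}K^1\subset K$ (using $\tau_\alpha K^1\tau_\alpha^{-1}\subset K$ and Lemma~\ref{relgruppi}$(b)$ for $\tau_\alpha$), so intersecting with $K$ first and applying Lemma~\ref{relgruppi}$(b)$ to $\tau_P$ yields $\tau_P^{-1}K^1\tau_PK^1\cap K=U_{\widehat P}^-K^1$; thus the constant equals $\big|(U_{\widehat P}^-K^1\cap wU_{\widehat\alpha}^+w^{-1}K^1)/K^1\big|$. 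A reduction modulo $\wp_D$ (both $U_{\widehat P}^-$ and $wU_{\widehat\alpha}^+w^{-1}$ being products of root subgroups $U_{\alpha'}$) shows this is $\big|(U_{\widehat P}^-\cap wU_{\widehat\alpha}^+w^{-1})K^1/K^1\big|$, which by Lemma~\ref{lemmaprodottotau2}$(b)$,$(c)$ equals $\big|(U^-\cap wUw^{-1})K^1/K^1\big|=q^{\ell(w)}$ by Remark~\ref{rmqlunghezza}.

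For distinctness I would show that $uK^1\mapsto K^1\tau_QuK^1$ is injective on $(U\cap wU^-w^{-1})K^1/K^1$, i.e.\ that $(\tau_Q^{-1}K^1\tau_Q)K^1\cap V\subset K^1$: if $v\in V$ and $v=nk$ with $n\in\tau_Q^{-1}K^1\tau_Q$, $k\in K^1$, then $n=vk^{-1}\in K$, so reduction modulo $\wp_D$ gives $\bar v=\bar n$; but $\bar v$ lies in the unitriangular subgroup $\overline U$ of $K/K^1\cong GL_m(\frack_D)$, whereas by the Iwahori factorization of $\tau_Q^{-1}K^1\tau_Q$ together with Lemma~\ref{rmqtau1} and Remark~\ref{rmqnormalizzatore} the reduction $\bar n$ lies in the block lower--unitriangular subgroup $\overline{U_{P(\tau_Q)}^-}$; hence $\bar v=\mathbb{I}_m$ and $v\in K^1$. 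Combining the pieces, $f_{\tau_P}f_y=q^{\ell(w)}\sum_uf_{\tau_Qu}=q^{\ell(w)}f_{\tau_Q}\sum_uf_u$. I expect the main obstacle to be the structure--constant computation, and within it the observation that $yuy^{-1}\in K^1$ for $u\in V$, which is exactly what collapses the dependence on $u$ and forces the shape $q^{\ell(w)}\sum_uf_u$; the transversality and the injectivity are routine once one passes to the finite quotient $K/K^1$, and everything else is bookkeeping with Lemmas~\ref{relgruppi}, \ref{rmqtau1}, \ref{lemmaprodottotau1} and \ref{lemmaprodottotau2} and Remark~\ref{rmqlunghezza}.
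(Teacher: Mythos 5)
Your argument is essentially the paper's: the same support computation via Lemmas~\ref{lemmaprodottotau1} and~\ref{lemmaprodottotau2}, the same pivotal observation $yuy^{-1}\in K^1$ (Lemma~\ref{rmqtau1}) collapsing the dependence on $u$, and the same chain through Lemma~\ref{relgruppi}$(b)$, Lemma~\ref{lemmaprodottotau2} and Remark~\ref{rmqlunghezza} to the value $q^{\ell(w)}$. The one genuine addition is your explicit check that $uK^1\mapsto K^1\tau_Q u K^1$ is injective on the transversal; the paper's final sentence tacitly needs $(f_{\tau_Q}\sum_u f_u)(\tau_Q u)=1$ for each $u$, which is exactly this injectivity, so your completion of that detail is correct and welcome.
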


\begin{proof}
We write $P=P(w,\alpha)$ and $Q=Q(w,\alpha)$. 
By lemmas \ref{lemmaprodottotau1} and \ref{lemmaprodottotau2} the support of 
$f_{\tau_{P}}f_wf_{\tau_\alpha}f_{w^{-1}}$ is $K^1\tau_{P}w\tau_{\alpha}w^{-1}(U_{\widehat P}^+\cap wU_{\widehat\alpha}^-w^{-1})K^1=K^1\tau_Q(U\cap wU^-w^{-1})K^1$.
Now if we take $u\in U\cap wU^-w^{-1}=U_{\widehat P}^+\cap wU_{\widehat\alpha}^-w^{-1}$ we obtain
\begin{align*}
(f_{\tau_{P}}f_{w\tau_\alpha w^{-1}})(\tau_Qu)&=\left|(K^1\tau_PK^1\cap \tau_Q uK^1w\tau_{\alpha}^{-1}w^{-1}K^1)/K^1\right|\\
&=\left|(\tau_P^{-1}K^1\tau_PK^1\cap w\tau_{\alpha}w^{-1}uK^1w\tau_{\alpha}^{-1}w^{-1}K^1)/K^1\right|\\
&=\left|(\tau_P^{-1}K^1\tau_PK^1\cap w\tau_{\alpha}K^1(w^{-1}uw)\tau_{\alpha}^{-1}K^1w^{-1})/K^1\right|\\
\text{\scriptsize{(lemma \ref{rmqtau1})}}
&=\left|(\tau_P^{-1}K^1\tau_PK^1\cap w\tau_{\alpha}K^1\tau_{\alpha}^{-1}K^1w^{-1})/K^1\right|\\
\text{\scriptsize{(lemma \ref{relgruppi}$(c)$)}}
&=\left|(\tau_P^{-1}K^1\tau_PK^1 \cap wU_{\widehat\alpha}^+w^{-1}K^1)/K^1\right|.
\end{align*}
To simplify notations, we denote temporarily $V=wU_{\widehat\alpha}^+w^{-1}$. 
Since $VK^1\subset K$, by lemma \ref{relgruppi}$(b)$ we have 
$\tau_P^{-1}K^1\tau_PK^1 \cap VK^1=\tau_P^{-1}K^1\tau_PK^1\cap K \cap VK^1 = U_{\widehat{P}}^-K^1 \cap VK^1$.
We want to prove that this last intersection is equal to $(U^-\cap V)K^1$. 
We consider the decomposition $V=(U^-\cap V)(U\cap V)$.
Let $x=v_1v_2k\in U_{\widehat{P}}^-K^1 \cap VK^1$ with $v_1\in U^-\cap V$, $v_2\in U\cap V$ and $k\in K^1$.
We have $v_1^{-1}x \in (U^-\cap V)U_{\widehat{P}}^-K^1 \cap (U\cap V)K^1\subset U^-K^1\cap UK^1=K^1$
and then $U_{\widehat{P}}^-K^1 \cap VK^1\subset (U^-\cap V)K^1$.
By lemma \ref{lemmaprodottotau2}$(a)$ we have $\mathbf{\Phi}_{\widehat P}^-\cap w \mathbf{\Psi}_{\widehat\alpha}^+=\emptyset$ which implies  
$U_{\widehat{P}}^- \cap V=U^-\cap V$ and so
$(U^-\cap V)K^1=(U_{\widehat{P}}^-\cap V)K^1\subset U_{\widehat{P}}^-K^1 \cap VK^1$.\\
This implies $(f_{\tau_{P}}f_{w\tau_\alpha w^{-1}})(\tau_Q u)=\left|(U^-\cap wU_{\widehat\alpha}^+w^{-1})K^1/K^1\right|$.
Since $w$ is of minimal length in $wW_{\widehat\alpha}$, by lemma  \ref{lemmaprodottotau2}$(c)$ we have  
$U^-\cap wU_{\widehat\alpha}^+w^{-1}=U^-\cap wUw^{-1}$ and then
$(f_{\tau_{P}}f_{w\tau_\alpha w^{-1}})(\tau_Q u)=\left|(U^-\cap wUw^{-1})K^1/K^1\right|$
which is $q^{\ell(w)}$ by remark \ref{rmqlunghezza}.
Finally we remark that $\sum_u f_u$ where $u$ describes a system of representatives of $(U\cap wU^-	w^{-1})K^1/K^1$ in $U\cap wU^-	w^{-1}$ is the characteristic function of $U\cap wU^- w^{-1}$.
This implies $(f_{\tau_{P}}f_wf_{\tau_\alpha}f_{w^{-1}})(\tau_Q u)=q^{\ell(w)}(f_{\tau_{Q(w,\alpha)}}\sum_{u}f_u) (\tau_Q u)$ for every $u\in U\cap wU^-w^{-1}$ and hence the result.
\end{proof}

\subsection{Presentation by generators and relations}
Now we want to prove that the list of relations that we have found in previous paragraph allows us to obtain a presentation of the algebra $\mathscr{H}(G,K^1)$. 

\begin{defin}\label{defA}
Let $\mathscr{A}$ be the $\Z$-algebra generated by elements $\f_\omega$ with $\omega\in \Omega$ which satisfy the following relations. 
\begin{enumerate}[1.]
	\item  $\f_{k}=1$ for every $k\in K^1$ and $\f_{k_1}\f_{k_2}=\f_{k_1k_2}$ for every $k_1,k_2\in K$;
	\item $\f_{\tau_0}\f_{\tau_0^{-1}}=1$
				and $\f_{\tau_0^{-1}}\f_{\omega}=\f_{\tau_0^{-1}\omega\tau_0}\f_{\tau_0^{-1}}$ for every $\omega\in \Omega$;
	\item $\f_{\tau_{\alpha}}\f_t=\f_{\tau_{\alpha}t\tau_{\alpha}^{-1}}\f_{\tau_{\alpha}}$ for every $t\in T$ and $\alpha\in\Sigma$;
  \item $\f_{\tau_{\alpha}}\f_u=\f_{\tau_{\alpha}u\tau_{\alpha}^{-1}}\f_{\tau_{\alpha}}$ if $u\in U_{\alpha'}$ with $\alpha'\in \mathbf{\Phi}_{\widehat{\alpha}}$, for every $\alpha\in\Sigma$;
  \item $\f_u\f_{\tau_{\alpha}}=\f_{\tau_{\alpha}}$ if $u\in U_{\alpha'}$ with $\alpha'\in \mathbf{\Psi}_{\widehat{\alpha}}^+$, for every $\alpha\in\Sigma$;
	\item $\f_{\tau_{\alpha}}\f_u=\f_{\tau_{\alpha}}$ if $u\in U_{\alpha'}$ with $\alpha'\in \mathbf{\Psi}_{\widehat{\alpha}}^-$, for every $\alpha\in\Sigma$;
	\item $\f_{\tau_{\alpha}}\f_{s_{\alpha'}}=\f_{s_{\alpha'}}\f_{\tau_{\alpha}}$ for every $\alpha\neq\alpha'$ in $\Sigma$;
	\item $\f_{\tau_{\alpha}}\f_{\tau_{\alpha'}}=\f_{\tau_{\alpha'}}\f_{\tau_{\alpha}}$ for every $\alpha,\alpha'\in\Sigma$;
	\item $\displaystyle{\Bigg(\prod_{\alpha'\in P(w,\alpha)}\f_{\tau_{\alpha'}}\Bigg)\f_w\f_{\tau_\alpha}\f_{w^{-1}}=
	q^{\ell(w)}\Bigg(\prod_{\alpha''\in Q(w,\alpha)}\f_{\tau_{\alpha''}}\Bigg)\left(\sum_{u}\f_u\right)}\;\;$ 
	for every $\alpha\in\Sigma$ and $w$ of minimal length in $wW_{\widehat \alpha}\in W/W_{\widehat \alpha}$ and where $u$ 
	describes a system of representatives of $(U\cap wU^-	w^{-1})K^1/K^1$ in $U\cap wU^-	w^{-1}$. 
\end{enumerate}
\end{defin}

\begin{rmk}\label{rmktau0}
By relation 2 of definition \ref{defA} we have $\f_{\tau_0^{-1}}\f_{\tau_0}=\f_{\tau_0^{-1}\tau_0\tau_0}\f_{\tau_0^{-1}}=\f_{\tau_0}\f_{\tau_0^{-1}}=1$ and so $\f_{\tau_0}$ has a two-sides inverse. 
Moreover we have 
$\f_{\tau_0}\f_{\omega}=\f_{\tau_0}\big(\f_{\tau_0^{-1}(\tau_0\omega\tau_0^{-1})\tau_0}\f_{\tau_0^{-1}}\big)\f_{\tau_0}=\f_{\tau_0}\f_{\tau_0^{-1}}\f_{\tau_0\omega\tau_0^{-1}}\f_{\tau_0}=\f_{\tau_0\omega\tau_0^{-1}}\f_{\tau_0}$ for every $\omega\in\Omega$.
Furthermore, if $i$ is a negative integer we write $\f_{\tau_0}^i=\f_{\tau_0^{-1}}^{-i}\in\mathscr{A}$ and then 
$\f_{\tau_0^{i}}\f_{\omega}=\f_{\tau_0^{i}\omega\tau_0^{-i}}\f_{\tau_0^{i}}$ holds for every $\omega\in \Omega$ and $i\in\Z$.
\end{rmk}

\begin{rmk}
If $D=F$ then $\f_{\tau_0^{-1}}$ and $\f_{\tau_0}$ are in the centre of the algebra. 
Moreover for every $\alpha\in\Sigma$ the element 
$\f_{\tau_{\alpha}}$ commutes with $\f_t$ with $t\in T$ and with $\f_u$ where $u\in U_{\alpha'}$ with $\alpha'\in \mathbf{\Phi}_{\widehat{\alpha}}$.
\end{rmk}

In order to simplify notation, from now on if 
$\tau=\prod_{\alpha\in \Sigma}\tau_\alpha^{i_\alpha}\in\bm\Delta$, where $i_\alpha\in \N$ for every $\alpha\in\Sigma$, we write
$\f_\tau=\prod\f_{\tau_\alpha}^{i_\alpha}\in\mathscr{A}$ that is well defined by 8 of definition \ref{defA}. 

\begin{prop}
The map 
\begin{equation*}
\begin{array}{rccc}
\vartheta:&\mathscr{A}&\longrightarrow & \mathscr{H}(G,K^1)\\
&\f_\omega&\longmapsto& f_\omega
\end{array}
\end{equation*}
is a surjective algebra homomorphism.
\end{prop}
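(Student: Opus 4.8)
The plan is to verify that $\vartheta$ is well-defined as an algebra homomorphism, and then that it is surjective. For well-definedness I must check that each of the ten families of defining relations of $\mathscr{A}$ (relations 1 through 9 of Definition \ref{defA}) is satisfied when each $\f_\omega$ is replaced by $f_\omega=\vartheta(\f_\omega)$ in $\mathscr{H}(G,K^1)$. Relation 1 holds because $f_k=1$ for $k\in K^1$ (the support is $K^1$) and because $K^1$ is normal in $K$, so every element of $K$ normalizes $K^1$ and Lemma \ref{lemmaprodotto} gives $f_{k_1}f_{k_2}=f_{k_1k_2}$. Relation 2: $\tau_0$ normalizes $K^1$ (Remark \ref{rmqnormalizzatore}), so $f_{\tau_0}f_{\tau_0^{-1}}=f_{1}=1$ by Lemma \ref{lemmaprodotto}; similarly $f_{\tau_0^{-1}}f_\omega=f_{\tau_0^{-1}\omega}$ and $f_{\tau_0^{-1}\omega\tau_0}f_{\tau_0^{-1}}=f_{\tau_0^{-1}\omega}$ (again since $\tau_0^{-1}$ normalizes $K^1$), so they agree. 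Relations 3 and 4 follow the same way: $\tau_\alpha$ normalizes $T$ and each $U_{\alpha'}$ with $\alpha'\in\bm\Phi_{\widehat\alpha}$ by Remark \ref{rmqnormalizzatore}, and more importantly $\tau_\alpha$ and the relevant group elements each normalize $K^1$ (for the image of $\tau_\alpha u\tau_\alpha^{-1}$ one uses Lemma \ref{rmqtau1}, noting the conjugate lands in $K$), so Lemma \ref{lemmaprodotto} applies on both sides and both equal $f_{\tau_\alpha u}$ (resp. $f_{\tau_\alpha t}$). Relations 5 and 6 are exactly Proposition \ref{proptaueu}, applied to the single root subgroups $U_{\alpha'}\subset U_{\widehat\alpha}^+$ (resp. $U_{\widehat\alpha}^-$). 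Relation 7 follows from Remark \ref{rmqnormalizzatore}, which says $\tau_\alpha$ centralizes $W_{P(\tau_\alpha)}=W_{\widehat\alpha}$, hence $s_{\alpha'}$ for $\alpha'\neq\alpha$; combined with Lemma \ref{lemmaprodotto} (both $\tau_\alpha$ and $s_{\alpha'}$ normalize $K^1$) we get $f_{\tau_\alpha}f_{s_{\alpha'}}=f_{\tau_\alpha s_{\alpha'}}=f_{s_{\alpha'}\tau_\alpha}=f_{s_{\alpha'}}f_{\tau_\alpha}$. Relation 8 is Lemma \ref{lemmatau}. Relation 9 is precisely Proposition \ref{proprel9}, noting that $\prod_{\alpha'\in P(w,\alpha)}f_{\tau_{\alpha'}}=f_{\tau_{P(w,\alpha)}}$ by Lemma \ref{lemmatau} (and likewise for $Q(w,\alpha)$).

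Once all defining relations of $\mathscr{A}$ are seen to hold in $\mathscr{H}(G,K^1)$ after applying $\omega\mapsto f_\omega$, the universal property of a presentation by generators and relations yields a unique algebra homomorphism $\vartheta:\mathscr{A}\to\mathscr{H}(G,K^1)$ sending $\f_\omega$ to $f_\omega$.

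For surjectivity I invoke Proposition \ref{propgeneratori}: the algebra $\mathscr{H}(G,K^1)$ is generated as a $\Z$-algebra by $\bm\Omega=\{f_\omega\mid\omega\in\Omega\}$. Since every $f_\omega$ lies in the image of $\vartheta$ and $\vartheta$ is an algebra homomorphism, its image is a subalgebra containing a generating set, hence is all of $\mathscr{H}(G,K^1)$. This completes the proof.

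I do not expect any serious obstacle here; the content of the proposition is essentially bookkeeping, since every relation in Definition \ref{defA} was manufactured in the preceding subsections to be a true identity in $\mathscr{H}(G,K^1)$. The only points requiring a little care are (i) making sure that in relations 3 and 4 the conjugated element $\tau_\alpha u\tau_\alpha^{-1}$ (or $\tau_\alpha t\tau_\alpha^{-1}$) really does normalize $K^1$ so that Lemma \ref{lemmaprodotto} is applicable on the right-hand side — this is where Remark \ref{rmqnormalizzatore} and Lemma \ref{rmqtau1} are used — and (ii) correctly matching the products $\prod f_{\tau_{\alpha'}}$ appearing in relation 9 with the single function $f_{\tau_{P(w,\alpha)}}$ via Lemma \ref{lemmatau}, so that Proposition \ref{proprel9} applies verbatim. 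The genuinely hard work — that $\vartheta$ is injective, hence an isomorphism, which would give the presentation — is deferred to later results (Corollary \ref{isomalgebre}) and is not part of this statement.
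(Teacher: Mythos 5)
Your proof is correct and follows essentially the same route as the paper's: check that each of the nine relation families of Definition \ref{defA} is satisfied by the $f_\omega$ (via Lemma \ref{lemmaprodotto} and Remark \ref{rmqnormalizzatore} for relations 1--4 and 7, Proposition \ref{proptaueu} for 5--6, Lemma \ref{lemmatau} for 8, and Proposition \ref{proprel9} for 9), then deduce surjectivity from Proposition \ref{propgeneratori}. One caution worth fixing: in your justification of relation 7 you parenthetically assert that ``both $\tau_\alpha$ and $s_{\alpha'}$ normalize $K^1$''; this is false for $\tau_\alpha$ with $\alpha\in\Sigma$ (conjugating $\mathbb{I}_m+M_m(\wp_D)$ by $\tau_\alpha$ scales off-diagonal blocks by $\varpi^{\pm 1}$, so it is not preserved --- indeed, if $\tau_\alpha$ normalized $K^1$, relations 4--6 and 9 would collapse to trivialities). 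The argument survives because Lemma \ref{lemmaprodotto} requires only one of the two factors to normalize $K^1$, and $s_{\alpha'}\in K$ does. Similarly, the citation of Lemma \ref{rmqtau1} in your treatment of relation 4 is misplaced (that lemma concerns the $\bm\Psi$-roots); the fact you need --- that $\tau_\alpha$ normalizes $U_{\alpha'}$ for $\alpha'\in\bm\Phi_{\widehat\alpha}$, so $\tau_\alpha u\tau_\alpha^{-1}\in K$ --- is part of Remark \ref{rmqnormalizzatore}.
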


\begin{proof}
We only need to show that the $f_{\omega}$ with $\omega\in\Omega$ satisfy all relations of definition \ref{defA}.
Thanks to lemma \ref{lemmaprodotto} and to remark \ref{rmqnormalizzatore} we obtain relations 1 and 2 because $K$ and $\tau_0$ normalize $K^1$, relation 3 because $\tau_{\alpha}$ normalizes $T$, relation 4 because $\tau_{\alpha}$ normalizes $U_{\alpha'}$ with 
$\alpha'\in \mathbf{\Phi}_{\widehat{\alpha}}$ and relation 7 because $\tau_{\alpha}$ commute with $s_{\alpha'}$ if $\alpha\neq\alpha'$. 
Furthermore, we obtain relations 5 and 6 by proposition \ref{proptaueu}, relation 8 by lemma \ref{lemmatau} and relation 9 by proposition \ref{proprel9}. 
\end{proof}

Now, we want to prove that  $\vartheta$ is an isomorphism  constructing a surjective homomorphism of $\Z$-modules 
$\vartheta':\mathscr{H}(G,K^1)\longrightarrow\mathscr{A}$ such that $\vartheta(\vartheta'(\Phi))=\Phi$ for every $\Phi\in \mathscr{H}( G,K^1)$. 

\begin{lemma}
The map
\begin{equation}\label{inversa}
\begin{array}{ccc}
\bm\Omega&\longrightarrow & \mathscr{A}\\
f_\omega&\longmapsto& \f_\omega
\end{array}
\end{equation}
is well defined.
\end{lemma}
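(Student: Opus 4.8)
The plan is to show that whenever $\omega,\omega'\in\Omega$ satisfy $f_\omega=f_{\omega'}$ in $\mathscr{H}(G,K^1)$ --- equivalently $K^1\omega K^1=K^1\omega'K^1$ --- one has $\f_\omega=\f_{\omega'}$ in $\mathscr{A}$. This is precisely the assertion that the assignment $\omega\mapsto\f_\omega$ factors through the surjection $\Omega\twoheadrightarrow\bm\Omega$, $\omega\mapsto f_\omega$, which is what ``well defined'' means here.

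First I would split $\Omega=K\cup\{\tau_0,\tau_0^{-1}\}\cup\{\tau_\alpha\mid\alpha\in\Sigma\}$ and check that no collision of $K^1$-double cosets can occur between the ``diagonal'' part $\{\tau_0,\tau_0^{-1}\}\cup\{\tau_\alpha\}$ and $K$, nor among the diagonal elements themselves. For the diagonal part this is elementary divisor theory for $GL_m(D)$: $K^1gK^1=K^1g'K^1$ forces $KgK=Kg'K$, and the double cosets $K\backslash G/K$ are parametrized by the ordered vector of valuations of the elementary divisors. The vectors attached to $\tau_0$, $\tau_0^{-1}$ and $\tau_{\alpha_{i,i+1}}$ --- namely $(1,\dots,1)$, $(-1,\dots,-1)$, and the vector with $i$ zeros followed by $m-i$ ones for $1\le i\le m-1$ --- are pairwise distinct and all nonzero, so these cosets are pairwise distinct and none meets $K$. (Alternatively one checks directly that $\tau_0^{-1}\tau_\alpha$, $\tau_0^{\pm 2}$ and $\tau_\alpha^{-1}\tau_{\alpha'}$ for $\alpha\neq\alpha'$ each have an entry outside $\ent_{D}$, hence lie outside $K^1=\mathbb{I}_m+M_m(\wp_{D})$, while $K^1\tau K^1\subset K\tau K$ avoids $K$ when $\tau$ is any of these nontrivial diagonal matrices.)

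It then remains to handle $\omega,\omega'\in K$ with $K^1\omega K^1=K^1\omega'K^1$. Since $K^1$ is normal in $K$ by remark \ref{rmqnormalizzatore}, this double coset is the single coset $\omega K^1=\omega'K^1$, so $\omega^{-1}\omega'\in K^1$. Relation $1$ of definition \ref{defA} then yields $\f_{\omega^{-1}\omega'}=1$ and $\f_{\omega'}=\f_{\omega}\f_{\omega^{-1}\omega'}=\f_\omega$, which finishes the argument.

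The only step needing any care is the disjointness of the $K^1$-double cosets of the diagonal elements of $\Omega$ from one another and from $K$; once that is established the rest is immediate from relation $1$. I therefore expect that disjointness check to be the (mild) main obstacle, although it is a routine consequence of the Cartan decomposition for $GL_m(D)$.
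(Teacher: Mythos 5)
Your proof is correct and follows the same two-case structure as the paper's: for $\omega_1,\omega_2\in K$ use normality of $K^1$ in $K$ together with relation~1 of Definition~\ref{defA}, and for the diagonal generators $\tau_0^{\pm 1},\tau_\alpha$ observe that their $K^1$-double cosets are pairwise distinct and disjoint from $K$. The paper simply asserts that last disjointness (``$\omega_2\in K^1\omega_1K^1\cap\Omega$ and so $\omega_2=\omega_1$''), whereas you justify it via the Cartan decomposition for $GL_m(D)$, which correctly fills in a step the paper leaves tacit.
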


\begin{proof}
Let $\omega_1,\omega_2\in\Omega$ be such that $K^1\omega_1K^1=K^1\omega_2K^1$. 
If $\omega_1\in K$ then $\omega_1^{-1}\omega_2\in K^1$ and by relation 1 of definition \ref{defA} we have 
$\f_{\omega_1}=\f_{\omega_2}$. 
If $\omega_1\in\{\tau_0,\tau_0^{-1}\}\cup \{\tau_\alpha\,|\,\alpha\in\Sigma\}$ then $\omega_2\in K^1\omega_1 K^1\cap \Omega$ and so 
$\omega_2=\omega_1$.
\end{proof}

We want to extend (\ref{inversa}) to a homomorphism of $\Z$-modules $\vartheta':\mathscr{H}(G,K^1)\longrightarrow \mathscr{A}$. 
Since we can write every $\Phi\in \mathscr{H}(G,K^1)$ uniquely as $\Phi=\sum_{x\in\Xi}\alpha_x f_x$ with $\alpha_x\in \Z$, it is sufficient to define the image of $f_x$ for every $x\in \Xi$. 
In order to do it we state and prove some lemmas.

\begin{lemma}\label{coroldefA}
Let $\tau\in\bm\Delta$. 
Then for every $u_1\in U_{P(\tau)}^+$, $u_2\in U_{P(\tau)}^-$ and $u_3\in M_{P(\tau)}^+\cup M_{P(\tau)}^-$ we have  
$\f_{u_1} \f_{\tau}=\f_{\tau}=\f_{\tau}\f_{u_2}$ and $\f_\tau \f_{u_3}=\f_{\tau u_3\tau^{-1}} \f_\tau$.
\end{lemma}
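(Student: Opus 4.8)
The plan is to reduce everything to the generator level and then push products through the defining relations of $\mathscr{A}$. First I would decompose $\tau=\prod_{\alpha\in\Sigma}\tau_\alpha^{i_\alpha}$ so that $\f_\tau=\prod_{\alpha}\f_{\tau_\alpha}^{i_\alpha}$, and reduce the first claim $\f_{u_1}\f_\tau=\f_\tau=\f_\tau\f_{u_2}$ to the case of a single root subgroup: write $u_1\in U_{P(\tau)}^+=\prod_{\alpha'\in\bm\Psi_{P(\tau)}^+}U_{\alpha'}$ as an ordered product of elements $u_{\alpha'}\in U_{\alpha'}$ (using relation 1, which makes $\f$ multiplicative on $K$), so it suffices to handle $\f_{u_{\alpha'}}\f_\tau$ for a fixed $\alpha'\in\bm\Psi_{P(\tau)}^+$. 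Since $\bm\Psi_{P(\tau)}^+=\bigcup_{\alpha\in\widehat{P(\tau)}}\bm\Psi_{\widehat\alpha}^+$, pick $\alpha\in\widehat{P(\tau)}$ with $\alpha'\in\bm\Psi_{\widehat\alpha}^+$ and factor $\tau=\tau_\alpha\,\tau(\alpha)$ with $\tau(\alpha)\in\bm\Delta$; then $\f_{u_{\alpha'}}\f_\tau=\f_{u_{\alpha'}}\f_{\tau_\alpha}\f_{\tau(\alpha)}=\f_{\tau_\alpha}\f_{\tau(\alpha)}=\f_\tau$ by relation 5. The identity $\f_\tau\f_{u_2}=\f_\tau$ is entirely symmetric, using $U_{P(\tau)}^-=\prod_{\alpha'\in\bm\Psi_{P(\tau)}^-}U_{\alpha'}$ and relation 6.

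For the second claim $\f_\tau\f_{u_3}=\f_{\tau u_3\tau^{-1}}\f_\tau$ with $u_3\in M_{P(\tau)}^+\cup M_{P(\tau)}^-$, I would again break $u_3$ into a product of single-root factors $u_{\alpha'}\in U_{\alpha'}$ with $\alpha'\in\bm\Phi_{P(\tau)}$ (via relation 1 on the $K$-side), so it reduces to showing $\f_\tau\f_{u_{\alpha'}}=\f_{\tau u_{\alpha'}\tau^{-1}}\f_\tau$ for $\alpha'\in\bm\Phi_{P(\tau)}$, and then inducting on $I(\tau)=\sum_\alpha i_\alpha$. In the inductive step, pick $\alpha\in\Sigma$ with $i_\alpha>0$, write $\tau=\tau_\alpha\widetilde\tau$; since $\alpha'\in\bm\Phi_{P(\tau)}\subset\bm\Phi_{\widehat\alpha}$, relation 4 gives $\f_{\tau_\alpha}\f_{u_{\alpha'}}=\f_{\tau_\alpha u_{\alpha'}\tau_\alpha^{-1}}\f_{\tau_\alpha}$, where $\tau_\alpha u_{\alpha'}\tau_\alpha^{-1}\in U_{\alpha'}$ still lies in a root subgroup indexed by $\bm\Phi_{\widehat\alpha}\supset\bm\Phi_{P(\tau)}\supset\bm\Phi_{P(\widetilde\tau)}$ — here I need to be slightly careful that $\alpha'\in\bm\Phi_{P(\widetilde\tau)}$ so the induction hypothesis applies, which follows because $P(\tau)\subset P(\widetilde\tau)$. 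Then $\f_\tau\f_{u_{\alpha'}}=\f_{\tau_\alpha}\f_{\widetilde\tau}\f_{u_{\alpha'}}=\f_{\tau_\alpha}\f_{\widetilde\tau\, u_{\alpha'}\widetilde\tau^{-1}}\f_{\widetilde\tau}=\f_{\tau_\alpha(\widetilde\tau u_{\alpha'}\widetilde\tau^{-1})\tau_\alpha^{-1}}\f_{\tau_\alpha}\f_{\widetilde\tau}=\f_{\tau u_{\alpha'}\tau^{-1}}\f_\tau$, using relation 4 at the last conjugation step.

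The main obstacle I anticipate is bookkeeping rather than conceptual: making the single-root reduction legitimate, i.e. checking that when one writes $u_1$, $u_2$, $u_3$ as ordered products of root-group elements and conjugates them by $\tau$ (or $\tau_\alpha$, $\widetilde\tau$), all the intermediate factors remain in the expected subgroups ($U_{P(\tau)}^\pm$, $M_{P(\tau)}^\pm$, or the relevant $U_{\alpha'}$), and that relation 1 genuinely lets us split and recombine these products inside $\mathscr{A}$ since all the factors and their conjugates lie in $K$. One must also verify that $\tau_\alpha u_{\alpha'}\tau_\alpha^{-1}\in K$ when $\alpha'\in\bm\Phi_{\widehat\alpha}$, which is remark \ref{rmqnormalizzatore} (it says $\tau\in\bm\Delta$ normalizes every $U_{\alpha'}$ with $\alpha'\in\bm\Phi_{P(\tau)}$), so no element escapes $K$ and relation 1 stays applicable throughout. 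Once these routine containments are in place, the proof is just the assembly of relations 1, 4, 5, 6 as above.
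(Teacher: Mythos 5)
Your argument is correct and follows essentially the same route as the paper's own proof: decompose $\tau$ into $\tau_\alpha$-factors with $\alpha\in\widehat{P(\tau)}$, decompose each $u_i$ into single-root factors via relation~1, and then push through using the observations $\bm\Psi_{P(\tau)}^+=\bigcup_{\alpha\in \widehat{P(\tau)}}\bm\Psi_{\widehat\alpha}^+$ (for relations 5, 6) and $\bm\Phi_{P(\tau)}^\pm=\bigcap_{\alpha\in \widehat{P(\tau)}}\bm\Phi_{\widehat\alpha}^\pm$ (for relation 4), with relation 8 to rearrange the $\f_{\tau_\alpha}$. The paper's proof is just a compressed version of your explicit induction on $I(\tau)$; the bookkeeping checks you flag (that $P(\tau)\subset P(\widetilde\tau)$, that $\widetilde\tau u_{\alpha'}\widetilde\tau^{-1}\in U_{\alpha'}$ via remark~\ref{rmqnormalizzatore}, etc.) are exactly the facts the paper uses implicitly.
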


\begin{proof}
We remark that $\tau_{\widehat{P(\tau)}}$ divides $\tau$ in $\bm\Delta$. 
Since 
$U_{P(\tau)}^+=\prod_{\alpha'\in\mathbf{\Psi}_{P(\tau)}^+}U_{\alpha'}$ and $\bm\Psi_{P(\tau)}^+=\bigcup_{\alpha\in \widehat{P(\tau)}}\bm\Psi_{\widehat{\alpha}}^+$, then by 5 and 8 of definition \ref{defA} we have $\f_{u_1} \f_{\tau}=\f_{\tau}$. 
Similarly by 6 and 8 of definition \ref{defA} we obtain $ \f_{\tau} \f_{u_2}=\f_{\tau}$. 
Since $M_{P(\tau)}^+=\prod_{\alpha\in\mathbf{\Phi}_{P(\tau)}^+}U_{\alpha}$ and 
$\bm\Phi_{P(\tau)}^+=\bigcap_{\alpha\in \widehat{P(\tau)}}\bm\Phi_{\widehat\alpha}^+$, then
by 4 and 8 of definition \ref{defA} we obtain $\f_\tau \f_{u_3}=\f_{\tau u_3\tau^{-1}} \f_\tau$
for every $u_3\in M_{P(\tau)}^+$.
Similarly we have $M_{P(\tau)}^-=\prod_{\alpha\in\mathbf{\Phi}_{P(\tau)}^-}U_{\alpha}$ and 
$\bm\Phi_{P(\tau)}^-=\bigcap_{\alpha\in \widehat{P(\tau)}}\bm\Phi_{\widehat\alpha}^-$ and so 
$\f_\tau \f_{u_3}=\f_{\tau u_3\tau^{-1}} \f_\tau$
for every $u_3\in M_{P(\tau)}^-$.
\end{proof}

Let $x\in G$. 
By (\ref{prodottogeneratori}) there exist $u_1,u_2\in U$, $t\in T$,  $i\in\Z$, $\tau=\prod_{\alpha\in\Sigma} \tau_\alpha^{i_\alpha}\in\bm\Delta$ and $w_1,w_2\in W$ such that 
$f_x=f_{u_1}f_{t}f_{\tau_0}^if_{w_1}f_\tau f_{w_2}f_{u_2}$.
We want to prove that the map $f_x\longmapsto \f_{u_1}\f_{t}\f_{\tau_0}^i\f_{w_1}\f_\tau \f_{w_2}\f_{u_2}$ is well defined.

\begin{lemma}\label{lemmadecompminim}
Let $t\in T$,  $i\in\Z$, $\tau\in\bm\Delta$, $w_1\in W$ and $w_2$ be of minimal length in $W_{P(\tau)}w_2$.  
Then for every $u_1,u_2\in U$ there exist $\widetilde{u_1}\in w_1 U^-w_1^{-1}\cap U$ and $\widetilde{u_2}\in w_2^{-1} U w_2\cap U$ such that
$\f_{u_1}\f_{t}\f_{\tau_0}^i\f_{w_1}\f_\tau \f_{w_2}\f_{u_2}=\f_{\widetilde{u_1}}\f_{t}\f_{\tau_0}^i\f_{w_1}\f_\tau \f_{w_2}\f_{\widetilde{u_2}}$.
\end{lemma}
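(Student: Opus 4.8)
The statement asserts that in the algebra $\mathscr{A}$, any product of the form $\f_{u_1}\f_{t}\f_{\tau_0}^i\f_{w_1}\f_\tau\f_{w_2}\f_{u_2}$ (with $u_1,u_2$ arbitrary in $U$) can be rewritten so that the left unipotent factor lies in $w_1U^-w_1^{-1}\cap U$ and the right unipotent factor lies in $w_2^{-1}Uw_2\cap U$. The point is to absorb the ``wrong-side'' parts of $u_1$ and $u_2$ into the surrounding factors, exactly mirroring what happens in $\mathscr{H}(G,K^1)$: the parts of $u_1$ that conjugate (through $w_1\tau$) into $K^1$ die, and the parts of $u_2$ that conjugate (through $\tau w_2$) into $K^1$ die. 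The plan is to prove this purely formally in $\mathscr{A}$, using relations 1--8 of Definition \ref{defA} together with remark \ref{rmqnormalizzatore}, lemma \ref{coroldefA}, and the root-combinatorics of section \ref{sezioneradici}.

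\textbf{Key steps.}
First I would handle the right-hand factor $\f_{u_2}$. Decompose $U$ according to the parabolic attached to $P(\tau)$ and the position of $w_2$: write $u_2=u_2'u_2''$ with $u_2''\in U\cap w_2^{-1}Uw_2$ (the ``good'' part, staying in $U$ after conjugating by $w_2$) and $u_2'$ built from root subgroups $U_{\alpha'}$ with $w_2\alpha'\in\bm\Phi^-$. Since $w_2$ has minimal length in $W_{P(\tau)}w_2$, proposition \ref{proplongmin} (applied to $w_2^{-1}$, of minimal length in $w_2^{-1}W_{P(\tau)}$) forces each such $\alpha'$ to satisfy $w_2\alpha'\in\bm\Psi_{P(\tau)}^-$, so $w_2 u_2' w_2^{-1}\in U_{P(\tau)}^-$. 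Using relation 1 to commute $\f_{w_2}$ past $\f_{u_2'}$ (turning it into $\f_{w_2u_2'w_2^{-1}}\f_{w_2}$) and then lemma \ref{coroldefA} ($\f_\tau\f_{u}=\f_\tau$ for $u\in U_{P(\tau)}^-$) kills $u_2'$, leaving $\widetilde{u_2}=u_2''$. Symmetrically, for $\f_{u_1}$: decompose $u_1=u_1'u_1''$ with $u_1''\in U\cap w_1U^-w_1^{-1}$ and $u_1'$ built from $U_{\alpha'}$ with $w_1^{-1}\alpha'\in\bm\Phi^+$; commute $\f_{u_1'}$ rightward past $\f_t,\f_{\tau_0}^i$ (relations 2, 3, and remark \ref{rmktau0}, rescaling the torus/$\tau_0$ conjugation) and then past $\f_{w_1}$ to reach $\f_{w_1^{-1}u_1'w_1}$, which I claim lands in $U_{P(\tau)}^+$ after commuting through $\f_\tau$ kills it via lemma \ref{coroldefA} ($\f_u\f_\tau=\f_\tau$ for $u\in U_{P(\tau)}^+$). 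The one subtlety is that conjugating $u_1'$ by $t$ and $\tau_0^i$ keeps it inside the same product of root subgroups (remark \ref{rmqnormalizzatore}: $T$ and $\tau_0$ normalize every $U_\alpha$), so the shape is preserved and only the ``good/bad'' dichotomy relative to $w_1$ matters.

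\textbf{Main obstacle.}
The delicate part is the bookkeeping that shows the ``bad'' part $w_1^{-1}u_1'w_1$ (resp. $w_2u_2'w_2^{-1}$) really lands in $U_{P(\tau)}^+$ (resp. $U_{P(\tau)}^-$) and not merely in $U$ (resp. $U^-$) --- i.e. that it is killed by $\f_\tau$ and not just by some $\f_{\tau_\alpha}$. This is exactly where the minimality of $w_1$ in $w_1W_{P(\tau)}$ (for the left side; note $w_1$ is a free $W$-element here, so I may need to further factor $w_1=w_1^{\mathrm{min}}v$ with $v\in W_{P(\tau)}$, push $\f_v$ to the right through $\f_\tau$ using that $\tau$ centralizes $W_{P(\tau)}$, and absorb it) and the decomposition $U=\prod_{\alpha}U_\alpha$ into individual root subgroups must be used, invoking proposition \ref{proplongmin} to pin down the signs $w^{\pm1}\alpha'\in\bm\Phi^\pm$. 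Once the relevant inclusions into $U_{P(\tau)}^\pm$ are established, the cancellation is immediate from lemma \ref{coroldefA}, and re-collecting the remaining good parts gives the asserted normal form with $\widetilde{u_1}\in w_1U^-w_1^{-1}\cap U$ and $\widetilde{u_2}\in w_2^{-1}Uw_2\cap U$.
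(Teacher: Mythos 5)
Your treatment of the right-hand factor $\f_{u_2}$ matches the paper: you split $u_2$ according to $w_2$, observe that minimality of $w_2$ in $W_{P(\tau)}w_2$ (via proposition \ref{proplongmin} applied to $w_2^{-1}$) makes the ``bad'' component land in $U_{P(\tau)}^-$ after conjugation by $w_2$, and kill it with lemma \ref{coroldefA}. That part is sound.

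The gap is on the left, and your ``main obstacle'' paragraph does not actually fix it. You hope that the bad part of $u_1$, after being pushed through $\f_t\f_{\tau_0}^i\f_{w_1}$, lands in $U_{P(\tau)}^+$ and is killed by $\f_\tau$. That hope is false: conjugating $u_1$ by $w_1^{-1}$ sends the bad part into $U$, but $U = M_{P(\tau)}^+ U_{P(\tau)}^+$, and the $M_{P(\tau)}^+$-component is \emph{not} killed by $\f_\tau$. Your proposed remedy --- factor $w_1 = w_1^{\min}v$ with $v\in W_{P(\tau)}$, commute $\f_v$ past $\f_\tau$, and ``absorb it'' --- fails on two counts. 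First, it changes $w_1$ and $w_2$, whereas the lemma fixes both in the output form (and lemma \ref{lemmadecompminimale} later needs exactly that uniqueness). Second, even if $w_1$ were minimal in $w_1W_{P(\tau)}$, an element $\alpha'\in\bm\Phi^+$ with $w_1^{-1}\alpha'\in\bm\Phi^+$ can perfectly well satisfy $w_1^{-1}\alpha'\in\bm\Phi_{P(\tau)}^+$ (take $w_1=1$ and $\alpha'\in\bm\Phi_{P(\tau)}^+$), so the $M_{P(\tau)}^+$-component does not disappear.

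The missing idea is that the $M_{P(\tau)}^+$-component is not killed but transported. Use the three-factor decomposition
\[
U=\bigl(w_1U^-w_1^{-1}\cap U\bigr)\bigl(w_1M_{P(\tau)}^+w_1^{-1}\cap U\bigr)\bigl(w_1U_{P(\tau)}^+w_1^{-1}\cap U\bigr),
\]
write $u_1'$ accordingly as $u_{1,1}u_{1,2}u_{1,3}$, kill $w_1^{-1}u_{1,3}w_1\in U_{P(\tau)}^+$ with $\f_{u}\f_\tau=\f_\tau$, and commute $w_1^{-1}u_{1,2}w_1\in M_{P(\tau)}^+$ \emph{through} $\f_\tau$ by relation 4 (lemma \ref{coroldefA}): $\f_{w_1^{-1}u_{1,2}w_1}\f_\tau=\f_\tau\f_{\tau^{-1}w_1^{-1}u_{1,2}w_1\tau}$. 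Then push this residue past $\f_{w_2}$; here minimality of $w_2$ is used a second time, giving $w_2^{-1}\bm\Phi_{P(\tau)}^+\subset\bm\Phi^+$, so $w_2^{-1}\tau^{-1}w_1^{-1}u_{1,2}w_1\tau w_2\in w_2^{-1}M_{P(\tau)}^+w_2\subset w_2^{-1}Uw_2\cap U$ and merges into $\widetilde{u_2}$. Only $u_{1,1}$ survives on the left, and conjugating it back by $t\tau_0^i$ (which normalizes each $U_\alpha$) gives $\widetilde{u_1}\in w_1U^-w_1^{-1}\cap U$ as required.
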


\begin{proof}
By relation 1 of definition \ref{defA} and by remark \ref{rmktau0} we obtain $\f_{u_1}\f_{t}\f_{\tau_0}^i\f_{w_1}\f_{\tau}\f_{w_2}\f_{u_2}=\f_t\f_{\tau_0}^i\f_{u'_1}\f_{w_1}\f_{\tau}\f_{w_2}\f_{u_2}$ where $u'_1=\tau_0^{-i}t^{-1}u_1t\tau_0^i\in U$. 
We set $P=P(\tau)$ and we consider the decomposition $U=(w_2^{-1}U_P^-w_2\cap U)(w_2^{-1}M_P^-w_2\cap U)(w_2^{-1}Uw_2\cap U)$.
Since $w_2$ is of minimal length in $W_Pw_2$, then $w_2^{-1}$ is of minimal length in $w_2^{-1}W_P$ and so by proposition \ref{proplongmin} we have $w_2^{-1}\mathbf{\Phi}_P^-\subset \mathbf{\Phi}^-$. 
We obtain $w_2^{-1}M_P^-w_2\cap U=\{1\}$ and then $U=(w_2^{-1}U_P^-w_2\cap U)(w_2^{-1}Uw_2\cap U)$.
Let $u_2=u_{2,1}u_{2,2}$ be according to this decomposition. 
By relation 1 of definition \ref{defA} and by lemma \ref{coroldefA} we have
$\f_{\tau}\f_{w_2}\f_{u_2}=\f_{\tau}\f_{w_2u_{2,1}w_2^{-1}}\f_{w_2}\f_{u_{2,2}}=\f_{\tau}\f_{w_2}\f_{u_{2,2}}$.
We now consider the decomposition $U=(w_1U^-w_1^{-1}\cap U)(w_1M_P^+w_1^{-1}\cap U)(w_1U_P^+w_1^{-1}\cap U)$
and we take  $u'_1=u_{1,1}u_{1,2}u_{1,3}$ according to this decomposition. 
By 1 of definition \ref{defA} and by lemma \ref{coroldefA} we have
$\f_{u'_1}\f_{w_1}\f_{\tau}
=\f_{u_{1,1}}\f_{w_1}\f_{w_1^{-1}u_{1,2}w_1}\f_{w_1^{-1}u_{1,3}w_1}\f_{\tau}
=\f_{u_{1,1}}\f_{w_1}\f_{w_1^{-1}u_{1,2}w_1}\f_{\tau}
=\f_{u_{1,1}}\f_{w_1}\f_{\tau}\f_{\tau^{-1}w_1^{-1}u_{1,2}w_1\tau}$.
Thus we obtain
$\f_{u'_1}\f_{w_1}\f_{\tau}\f_{w_2}\f_{u_2}=\f_{u_{1,1}}\f_{w_1}\f_{\tau}\f_{w_2}\f_{w_2^{-1}\tau^{-1}w_1^{-1}u_{1,2}w_1\tau w_2}\f_{u_{2,2}}$.
Now, since by proposition \ref{proplongmin} we have $w_2^{-1}\mathbf{\Phi}_P^+\subset \mathbf{\Phi}^+$, we obtain
$w_2^{-1}(\tau^{-1}w_1^{-1}u_{1,2}w_1\tau) w_2\in w_2^{-1}M_P^+w_2\subset w_2^{-1}Uw_2\cap U$.
So, if we set $\widetilde{u_1}=t\tau_0^iu_{1,1}\tau_0^{-i}t^{-1}\in w_1U^-w_1^{-1}\cap U$ and $\widetilde{u_2}=w_2^{-1}\tau^{-1}w_1^{-1}u_{1,3}w_1\tau w_2u_{2,2}\in w_2^{-1}Uw_2\cap U$ we obtain the result.
\end{proof}

\begin{lemma}\label{lemmadecompminimale}
Let $x\in G$. 
There exist $t\in T$, $i\in\Z$, $\tau\in\bm\Delta$, $w_1\in W$, $w_2$ of minimal length in $W_{P(\tau)}w_2\in W_{P(\tau)}\bs W$, $u_1\in w_1U^-w_1^{-1}\cap U$ and $u_2\in w_2^{-1}Uw_2\cap U$ such that 
\begin{equation}\label{prodottogeneratori2}
f_x=f_{u_1}f_{t}f_{\tau_0}^if_{w_1}f_\tau f_{w_2}f_{u_2}.
\end{equation}
Moreover such a decomposition of $f_x$ is unique. 
\end{lemma}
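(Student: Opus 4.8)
The existence of a decomposition of the form \eqref{prodottogeneratori2} is essentially what Lemma \ref{lemmadecompminim} provides once we start from \eqref{prodottogeneratori}: given $x\in G$, Lemma \ref{scomposizione} (and hence \eqref{prodottogeneratori}) writes $f_x=f_{u_1}f_tf_{\tau_0}^if_{w_1}f_\tau f_{w_2}f_{u_2}$ with $u_1,u_2\in U$, $t\in T$, $i\in\Z$, $\tau\in\bm\Delta$, $w_1\in W$ and $w_2$ of minimal length in $W_{P(\tau)}w_2$. Applying Lemma \ref{lemmadecompminim} replaces $u_1,u_2$ by $\widetilde u_1\in w_1U^-w_1^{-1}\cap U$ and $\widetilde u_2\in w_2^{-1}Uw_2\cap U$ without changing the product, which gives existence. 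So the whole content of the statement is the \emph{uniqueness} of $(u_1,t,i,\tau,w_1,w_2,u_2)$ with these constraints.

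\textbf{Strategy for uniqueness.} The first step is to pass from the Hecke-algebra identity to a statement about double cosets. By Lemma \ref{lemmaprodotto} and Lemma \ref{lemmatau}, the product $f_{u_1}f_tf_{\tau_0}^if_{w_1}f_\tau f_{w_2}f_{u_2}$ equals $f_{u_1t\tau_0^iw_1\tau w_2u_2}$, so two decompositions give $K^1u_1t\tau_0^iw_1\tau w_2u_2K^1=K^1u_1't'\tau_0^{i'}w_1'\tau'w_2'u_2'K^1$. Now I invoke the uniqueness clause of Lemma \ref{scomposizione}: that lemma says that in the factorization $x=k_1u_1t\tau_0^iw_1\tau w_2u_2k_2$ the data $t,i,\tau,w_1$ and $w_2$ (subject to $w_2$ minimal in $W_{P(\tau)}w_2$) are \emph{uniquely} determined by the double coset $K^1xK^1$. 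Hence $t=t'$, $i=i'$, $\tau=\tau'$, $w_1=w_1'$, $w_2=w_2'$ immediately. It remains only to recover $u_1$ and $u_2$; this is where the extra constraints $u_1\in w_1U^-w_1^{-1}\cap U$ and $u_2\in w_2^{-1}Uw_2\cap U$ do their work, since Lemma \ref{scomposizione} itself does not pin down the $U$-parts.

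\textbf{Recovering the $U$-parts.} With $t,i,\tau,w_1,w_2$ now fixed, write $\sigma=t\tau_0^iw_1\tau w_2$ and suppose $K^1u_1\sigma u_2K^1=K^1u_1'\sigma u_2'K^1$, i.e. $u_1\sigma u_2=k_1u_1'\sigma u_2'k_2$ for some $k_1,k_2\in K^1$. I would use Lemma \ref{relgruppi}$(a)$ together with Lemma \ref{rmqtau1} to absorb $K^1$ on the appropriate sides: conjugating a lower-unipotent piece of $K^1$ past $\tau$ keeps it in $K^1$, and similarly on the other side, so after moving the $K^1$'s through we reduce to an equation of the form $u_1^{-1}u_1'\in \sigma(w_2^{-1}Uw_2\cap U)\sigma^{-1}\cdot(\text{something in }U^-)$ versus $u_1^{-1}u_1'\in w_1U^-w_1^{-1}\cap U$; the key point is that $w_1U^-w_1^{-1}\cap U$ meets $w_1\bm\Phi^-\cap\bm\Phi^+$ while $\sigma$ conjugates $w_2^{-1}Uw_2\cap U$ into a product of positive root groups disjoint from that set. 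Concretely I would argue: the set of roots $\alpha$ with $U_\alpha\subset w_1U^-w_1^{-1}$ is $w_1\bm\Phi^-$, and one checks using Lemma \ref{rmqtau1} and Remark \ref{rmqnormalizzatore} that $\sigma$ normalizes $K^1$-mod the positive root groups in $w_1\bm\Phi^+$, so that $u_1$ is forced to be the unique $U$-representative of the coset $K^1x\sigma^{-1}$ lying in $w_1U^-w_1^{-1}\cap U$; symmetrically for $u_2$ using $w_2^{-1}Uw_2\cap U$ and the left $K^1$. Thus $u_1=u_1'$ and $u_2=u_2'$.

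\textbf{Main obstacle.} The delicate part is precisely the last step: disentangling the two $U$-factors, because $\sigma$ sits between them and conjugation by $\sigma$ mixes positive and negative root groups. I expect to need a careful bookkeeping of which root groups land inside $K^1$ after conjugation by $\tau$ (Lemma \ref{rmqtau1}) and which are permuted by $w_1,w_2$, and to combine this with the Iwahori-style factorization $U=(w_1U^-w_1^{-1}\cap U)\cdot(w_1Uw_1^{-1}\cap U)$ to see that the constraint $u_1\in w_1U^-w_1^{-1}\cap U$ selects exactly one representative. Once that combinatorial claim about root groups is in hand, the uniqueness of $u_1$ and $u_2$ follows formally, and together with the uniqueness from Lemma \ref{scomposizione} this completes the proof.
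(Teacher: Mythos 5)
Your existence step and the reduction to uniqueness of $(t,i,\tau,w_1,w_2)$ via Lemma \ref{scomposizione} both match the paper exactly, and the observation that the whole product collapses to $f_{u_1t\tau_0^iw_1\tau w_2u_2}$ by Lemma \ref{lemmaprodotto} is correct (each of $u_1$, $t$, $\tau_0$, $w_1$, $w_2$, $u_2$ normalizes $K^1$, and $u_1t\tau_0^iw_1$ normalizes $K^1$ so $f_{u_1t\tau_0^iw_1}f_\tau=f_{u_1t\tau_0^iw_1\tau}$). However, the part you flag yourself as ``delicate'' --- pinning down $f_{u_1}$ and $f_{u_2}$ --- is a genuine gap, not just a calculation left to the reader. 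The intermediate claim that ``$u_1$ is forced to be the unique $U$-representative of the coset $K^1x\sigma^{-1}$ lying in $w_1U^-w_1^{-1}\cap U$'' is not correct as stated: starting from $x=k_1u_1\sigma u_2k_2$ one gets $x\sigma^{-1}=k_1u_1(\sigma u_2k_2\sigma^{-1})$, and the factor $\sigma u_2k_2\sigma^{-1}$ need not be in $K^1U$ (conjugation by $\sigma$ sends $k_2\in K^1$ all over the place because $\tau^{-1}K^1\tau\not\subset K$ in general), so the coset $K^1x\sigma^{-1}$ does not lie in a single $K^1UK^1$ class from which $u_1$ could be read off. Your proposed root-group bookkeeping ``$\sigma$ conjugates $w_2^{-1}Uw_2\cap U$ into positive root groups disjoint from $w_1\bm\Phi^-\cap\bm\Phi^+$'' also does not hold literally, since $\tau$ does not permute root groups but rather shrinks some of them into $K^1$.

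The paper avoids this entirely by staying inside the Hecke algebra. From the two decompositions it multiplies on the left by $f_{w_1^{-1}\tau_0^{-i}t^{-1}u_1^{-1}u_1't\tau_0^iw_1}$ and on the right by $f_{w_2u_2'u_2^{-1}w_2^{-1}}$ (both legal because those elements normalize $K^1$), obtaining $f_\tau=f_{u_-}f_\tau f_{u_+}$ with $u_-\in U^-$ and $u_+\in U$ (this is where the constraints $u_1,u_1'\in w_1U^-w_1^{-1}\cap U$ and $u_2,u_2'\in w_2^{-1}Uw_2\cap U$ enter). It then expands $f_{u_-}$ via Remark \ref{rmkscomposizione} as $f_{\tilde u_1}f_{\tilde t}f_{\tilde w}f_{\tilde u_2}f_{\tilde u_3}$ with $\tilde u_2\in M_{P}^+$, $\tilde u_3\in U_{P}^+$, absorbs $f_{\tilde u_3}$ into $f_\tau$ by Proposition \ref{proptaueu}, and invokes the uniqueness in Lemma \ref{scomposizione} applied to the double coset $K^1\tau K^1$ to force $\tilde t=\tilde w=1$. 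That gives $u_-\in U^-\cap K^1U\subset K^1$, hence $f_\tau=f_\tau f_{u_+}$, whence $u_+\in\tau^{-1}K^1\tau K^1\cap U\subset U^-K^1\cap U\subset K^1$ by Lemma \ref{relgruppi}$(b)$. So the mechanism that closes the argument is a second use of Lemma \ref{scomposizione}'s uniqueness and Lemma \ref{relgruppi}$(b)$, which is precisely the ingredient missing from your sketch.
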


\begin{proof}
By formula (\ref{prodottogeneratori}) and by lemma \ref{scomposizione}, there exist $u_1,u_2\in U$ and unique $t\in T$,  $i\in\Z$, $\tau\in\bm\Delta$, $w_1\in W$ and $w_2$ of minimal length in $W_{P(\tau)}w_2$ such that $f_x=f_{u_1}f_{t}f_{\tau_0}^if_{w_1}f_\tau f_{w_2}f_{u_2}=\vartheta(\f_{u_1}\f_{t}\f_{\tau_0}^i\f_{w_1}\f_\tau \f_{w_2}\f_{u_2})$.
By lemma \ref{lemmadecompminim} we can choose $u_1\in w_1U^-w_1^{-1}\cap U$ and $u_2\in w_2^{-1}Uw_2\cap U$ and so 
we have proved the existence of such a decomposition.  
Now, let $u_1,u'_1\in w_1U^-w_1^{-1}\cap U$ and $u_2,u'_2\in w_2^{-1}Uw_2\cap U$ be such that 
$f_x=f_{u_1}f_{t}f_{\tau_0}^if_{w_1}f_{\tau}f_{w_2}f_{u_2}=f_{u'_1}f_{t}f_{\tau_0}^if_{w_1}f_{\tau}f_{w_2}f_{u'_2}$. 
By lemma \ref{lemmaprodotto} we obtain
\[f_{\tau}=f_{w_1^{-1}\tau_0^{-i}t^{-1}u_1^{-1}u'_1t\tau_0^iw_1}f_{\tau}f_{w_2u'_2u_2^{-1}w_2^{-1}}=f_{u_-}f_{\tau}f_{u_+}\]
where $u_-=w_1^{-1}\tau_0^{-i}t^{-1}u_1^{-1}u'_1t\tau_0^iw_1\in U^-$ and $u_+=w_2u'_2u_2^{-1}w_2^{-1}\in U$. 
By remark \ref{rmkscomposizione} there exist $\tilde{u}_1\in U$, $\tilde{t}\in T$, $\tilde{w}\in W$, $\tilde{u}_2\in M_P^+$ and $\tilde{u}_3\in U_P^+$ such that $f_{u_-}=f_{\tilde{u}_1}f_{\tilde{t}}f_{\tilde{w}}f_{\tilde{u}_2}f_{\tilde{u}_3}$.
By proposition \ref{proptaueu} and lemma \ref{lemmaprodotto} we obtain 
$f_{\tau}=f_{\tilde{u}_1}f_{\tilde{t}}f_{\tilde{w}}f_{\tilde{u}_2}f_{\tilde{u}_3}f_{\tau}f_{u_+}=f_{\tilde{u}_1}f_{\tilde{t}}f_{\tilde{w}}f_{\tau}f_{\tau^{-1}\tilde{u}_2\tau u_+}$ and then $\tilde{t}=\tilde{w}=1$ by lemma \ref{scomposizione}. 
We obtain $f_{u_-}=f_{\tilde{u}_1\tilde{u}_2\tilde{u}_3}$ which implies $u_-\in U^-\cap K^1U\subset K^1$ and so $f_{\tau}=f_{\tau}f_{u_+}$. 
This implies  $K^1\tau K^1\cap K^1\tau u_+K^1\neq\emptyset$ and so by lemma \ref{relgruppi}$(b)$ we have
$u_+\in \tau^{-1}K^1\tau K^1\cap U\subset U^-K^1\cap U\subset K^1$. 
We have proved $f_{u_-}=f_{u_+}=1$ which implies  $f_{u'_1}=f_{u_1}$ and $f_{u'_2}=f_{u_2}$, hence the uniqueness of decomposition  
(\ref{prodottogeneratori2}).
\end{proof}

Let $x\in\Xi$ and let $f_x=f_{u_1}f_{t}f_{\tau_0}^if_{w_1}f_\tau f_{w_2}f_{u_2}$ be the unique decomposition of $f_x$ of the form (\ref{prodottogeneratori2}). 
We define $\vartheta'(f_x)=\f_{u_1}\f_{t}\f_{\tau_0}^i\f_{w_1}\f_\tau \f_{w_2}\f_{u_2}$ and the homomorphism of $\Z$-modules
\begin{equation}\label{inversa2}
\begin{array}{ccc}
\vartheta':\mathscr{H}(G,K^1)&\longrightarrow &\mathscr{A}\\
\sum_{x\in\Xi}a_x f_x&\longmapsto& \sum_{x\in\Xi}a_x \vartheta'(f_x).
\end{array}
\end{equation}
Now we want to prove that $\vartheta'$ is surjective. 

\begin{lemma}\label{lemmaprodotto2}
Let $x\in G$ and $f_x=f_{u_1}f_{t}f_{\tau_0}^if_{w_1}f_\tau f_{w_2}f_{u_2}$ be a decomposition of the form (\ref{prodottogeneratori}). 
Then $\vartheta'(f_x)=\f_{u_1}\f_{t}\f_{\tau_0}^i\f_{w_1}\f_\tau \f_{w_2}\f_{u_2}$. 
\end{lemma}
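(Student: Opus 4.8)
The plan is to show that the value $\f_{u_1}\f_t\f_{\tau_0}^i\f_{w_1}\f_\tau\f_{w_2}\f_{u_2}\in\mathscr{A}$ does not depend on the particular decomposition \eqref{prodottogeneratori} of $f_x$, by reducing an arbitrary such decomposition to the unique ``minimal'' one from lemma \ref{lemmadecompminimale} without changing the element of $\mathscr{A}$ it represents. Concretely, starting from a decomposition of the form \eqref{prodottogeneratori}, I would first apply lemma \ref{lemmadecompminim}: that lemma is stated for $\mathscr{A}$ (its proof only manipulates the $\f$-generators via the relations of definition \ref{defA} and remark \ref{rmktau0}), so it yields $\widetilde{u_1}\in w_1U^-w_1^{-1}\cap U$ and $\widetilde{u_2}\in w_2^{-1}Uw_2\cap U$ with
\[\f_{u_1}\f_t\f_{\tau_0}^i\f_{w_1}\f_\tau\f_{w_2}\f_{u_2}=\f_{\widetilde{u_1}}\f_t\f_{\tau_0}^i\f_{w_1}\f_\tau\f_{w_2}\f_{\widetilde{u_2}}.\]
Applying $\vartheta$ and using \eqref{prodottogeneratori} again, the right-hand side maps to $f_{\widetilde{u_1}}f_t\f_{\tau_0}^if_{w_1}f_\tau f_{w_2}f_{\widetilde{u_2}}=f_x$, so this is a decomposition of $f_x$ of the form \eqref{prodottogeneratori2}.

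Next I would invoke the uniqueness part of lemma \ref{scomposizione}: whatever $t,i,\tau,w_1$ and whatever $W_{P(\tau)}$-coset of $w_2$ appear in the original decomposition \eqref{prodottogeneratori}, they are forced to coincide with the data of the minimal decomposition, and since $w_2$ in \eqref{prodottogeneratori2} is required to be of minimal length in $W_{P(\tau)}w_2$, lemma \ref{propunicradice} pins down $w_2$ itself. Then lemma \ref{lemmadecompminimale} gives uniqueness of $\widetilde{u_1}$ and $\widetilde{u_2}$ as well, so the decomposition produced above is exactly the one used to define $\vartheta'(f_x)$. By definition of $\vartheta'$ we get
\[\vartheta'(f_x)=\f_{\widetilde{u_1}}\f_t\f_{\tau_0}^i\f_{w_1}\f_\tau\f_{w_2}\f_{\widetilde{u_2}}=\f_{u_1}\f_t\f_{\tau_0}^i\f_{w_1}\f_\tau\f_{w_2}\f_{u_2},\]
which is the claim.

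The one point that needs care — and which I expect to be the main obstacle — is the bookkeeping around lemma \ref{lemmadecompminim}: I must be sure it is genuinely an identity in $\mathscr{A}$ (i.e. that in passing from $u_1,u_2$ to $\widetilde{u_1},\widetilde{u_2}$ the central monomial $\f_t\f_{\tau_0}^i\f_{w_1}\f_\tau\f_{w_2}$ is untouched, as the proof of that lemma indeed arranges), and that the $u_i$ in the starting decomposition \eqref{prodottogeneratori} may be arbitrary elements of $U$ rather than already lying in the constrained subgroups. Once that is granted, the argument is just: normalize via lemma \ref{lemmadecompminim}, identify the result with the canonical form via the uniqueness in lemmas \ref{scomposizione}, \ref{propunicradice} and \ref{lemmadecompminimale}, and read off the definition of $\vartheta'$. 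No genuinely new computation is required; everything rests on results already established.
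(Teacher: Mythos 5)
There is a genuine gap. Lemma \ref{lemmadecompminim} carries the hypothesis that $w_2$ is of minimal length in $W_{P(\tau)}w_2$, but in a decomposition of the form \eqref{prodottogeneratori} the elements $w_1,w_2$ are \emph{arbitrary} elements of $W$ --- and this generality is genuinely used later: in the proof of proposition \ref{prodotto}, lemma \ref{lemmaprodotto2} is invoked on factors such as $f_{w_2s}$ which need not be minimal in their $W_{P(\tau)}$-coset. You apply lemma \ref{lemmadecompminim} directly to the given $w_1,w_2$, which is only legitimate when $w_2$ is already minimal. The paper inserts a preliminary reduction that you skip: write $w_2=w_3w_4$ with $w_3\in W_{P(\tau)}$ and $w_4$ of minimal length in $W_{P(\tau)}w_2$, and then use relations $1$ and $7$ of definition \ref{defA} (relation $7$ applies because the $\tau_\alpha$ occurring in $\f_\tau$ have $\alpha\notin P(\tau)$, while $w_3$ is a word in the $s_{\alpha'}$ with $\alpha'\in P(\tau)$) to get $\f_{w_1}\f_\tau\f_{w_2}=\f_{w_1w_3}\f_\tau\f_{w_4}$ in $\mathscr{A}$. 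Only after this can lemma \ref{lemmadecompminim} be applied, with $w_1w_3$ and $w_4$ in the roles of $w_1$ and $w_2$.

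This same omission also makes your uniqueness step imprecise. You write that ``whatever $t,i,\tau,w_1$ \dots\ are forced to coincide with the data of the minimal decomposition'', but $w_1$ is \emph{not} invariant: normalizing $w_2$ replaces $w_1$ by $w_1w_3$. Correspondingly, your $\widetilde{u_1}$ would lie in $w_1U^-w_1^{-1}\cap U$ rather than in $w_1w_3U^-w_3^{-1}w_1^{-1}\cap U$, so what you produce is not in general a decomposition of the canonical form \eqref{prodottogeneratori2}, and the definition of $\vartheta'$ cannot be read off from it. Once the missing $w_3$-reduction is supplied, the remainder of your argument --- apply lemma \ref{lemmadecompminim}, match with the unique canonical decomposition of lemma \ref{lemmadecompminimale}, read off $\vartheta'$ --- coincides with the paper's proof.
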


\begin{proof}
If $w_2=w_3w_4$ with $w_3\in W_{P(\tau)}$ and $w_4$ is of minimal length in $W_{P(\tau)}w_2$ then by 1 and 7 of definition \ref{defA} we have  
$\f_{u_1}\f_{t}\f_{\tau_0}^i\f_{w_1}\f_\tau \f_{w_2}\f_{u_2}=\f_{u_1}\f_{t}\f_{\tau_0}^i\f_{w_1w_3}\f_\tau \f_{w_4}\f_{u_2}$.
By lemma \ref{lemmadecompminim} there exist $\widetilde{u_1}\in w_1w_3 U^-w_3^{-1}w_1^{-1}\cap U$ and $\widetilde{u_2}\in w_4^{-1} U w_4\cap U$ such that $\f_{u_1}\f_{t}\f_{\tau_0}^i\f_{w_1w_3}\f_\tau \f_{w_4}\f_{u_2}=\f_{\widetilde{u_1}}\f_{t}\f_{\tau_0}^i\f_{w_1w_3}\f_\tau \f_{w_4}\f_{\widetilde{u_2}}$ in $\mathscr{A}$.
Since $f_x=\vartheta(\f_{u_1}\f_{t}\f_{\tau_0}^i\f_{w_1}\f_\tau \f_{w_2}\f_{u_2})=\vartheta(\f_{\widetilde{u_1}}\f_{t}\f_{\tau_0}^i\f_{w_1w_3}\f_\tau \f_{w_4}\f_{\widetilde{u_2}}) =f_{\widetilde{u_1}}f_{t}f_{\tau_0}^if_{w_1w_3}f_\tau f_{w_4}f_{\widetilde{u_2}}$ is the unique decomposition of the form (\ref{prodottogeneratori2}), we obtain that
$\vartheta'(f_x)=
\f_{\widetilde{u_1}}\f_{t}\f_{\tau_0}^i\f_{w_1w_3}\f_\tau \f_{w_4}\f_{\widetilde{u_2}}=
\f_{u_1}\f_{t}\f_{\tau_0}^i\f_{w_1}\f_\tau \f_{w_2}\f_{u_2}$.
\end{proof}

\begin{prop}\label{prodotto}
The homomorphism of $\Z$-modules $\vartheta'$ defined in (\ref{inversa2}) is surjective. 
\end{prop}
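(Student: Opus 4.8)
The plan is to prove that $\mathscr{B}:=\mathrm{Im}(\vartheta')$ is a right ideal of $\mathscr{A}$. Since $\mathscr{B}$ is a $\Z$-submodule of $\mathscr{A}$ containing $1=\vartheta'(f_{\mathbb{I}_m})$ (the trivial decomposition of the form (\ref{prodottogeneratori}) of the identity function of $\mathscr{H}(G,K^1)$ gives $\vartheta'(f_{\mathbb{I}_m})=1$), this yields $\mathscr{A}=1\cdot\mathscr{A}\subseteq\mathscr{B}\,\mathscr{A}\subseteq\mathscr{B}$, hence $\mathscr{B}=\mathscr{A}$, i.e. $\vartheta'$ is surjective. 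Now $\mathscr{A}$ is generated as a $\Z$-algebra by $\{\f_\omega:\omega\in\Omega\}$, $\mathscr{B}$ is $\Z$-spanned by the elements $\vartheta'(f_x)$ with $x\in G$, and $\vartheta'$ is $\Z$-linear; therefore it suffices to prove that
\[\vartheta'(f_x)\,\f_\omega\in\mathscr{B}\qquad\text{for all }x\in G,\ \omega\in\Omega,\]
since then an induction on $n$ shows $\vartheta'(f_x)\,\f_{\omega_1}\cdots\f_{\omega_n}\in\mathscr{B}$, and by $\Z$-linearity $\vartheta'(\Phi)\,a\in\mathscr{B}$ for every $\Phi\in\mathscr{H}(G,K^1)$ and $a\in\mathscr{A}$.

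Fix $x\in G$ together with a decomposition $f_x=f_{u_1}f_tf_{\tau_0}^{i}f_{w_1}f_\tau f_{w_2}f_{u_2}$ of the form (\ref{prodottogeneratori}), so $\vartheta'(f_x)=\f_{u_1}\f_t\f_{\tau_0}^{i}\f_{w_1}\f_\tau\f_{w_2}\f_{u_2}$ by lemma \ref{lemmaprodotto2}. The key observation is the converse: for \emph{any} $u_1,u_2\in U$, $t\in T$, $i\in\Z$, $\tau\in\bm\Delta$, $w_1,w_2\in W$ one has, by repeated use of lemma \ref{lemmaprodotto}, $f_{u_1}f_tf_{\tau_0}^{i}f_{w_1}f_\tau f_{w_2}f_{u_2}=f_y$ with $y=u_1t\tau_0^{i}w_1\tau w_2 u_2$, so by lemma \ref{lemmaprodotto2} the product $\f_{u_1}\f_t\f_{\tau_0}^{i}\f_{w_1}\f_\tau\f_{w_2}\f_{u_2}$ equals $\vartheta'(f_y)$ and lies in $\mathscr{B}$. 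Hence it is enough to rewrite $\vartheta'(f_x)\,\f_\omega$, using only the relations of definition \ref{defA}, as a $\Z$-combination of products of the shape (\ref{prodottogeneratori}). If $\omega=k\in K$, relation $1$ gives $\f_{u_2}\f_k=\f_{u_2k}$; re-expanding $u_2k$ by remark \ref{rmkscomposizione} and relation $1$ and then commuting the new Weyl and torus factors leftward through $\f_\tau$, $\f_{w_1}$ and $\f_{\tau_0}^{i}$ by relations $1$, $3$, lemma \ref{coroldefA} and remark \ref{rmktau0}, one returns $\vartheta'(f_x)\f_k$ to the form (\ref{prodottogeneratori}). If $\omega=\tau_0^{\pm1}$, one simply moves $\f_{\tau_0}^{\pm1}$ leftward through the whole decomposition by relation $2$ and remark \ref{rmktau0}, absorbing the exponent into $\f_{\tau_0}^{i}$, and again obtains a product of the form (\ref{prodottogeneratori}).

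The essential case is $\omega=\tau_\alpha$ with $\alpha\in\Sigma$. Using relations $1$, $3$, $7$, $8$, lemma \ref{coroldefA} and remark \ref{rmktau0} — the kind of rewriting carried out in lemmas \ref{lemmadecompminim} and \ref{lemmadecompminimale} — together with proposition \ref{proplongmin2} and the symmetry remark after lemma \ref{propunicradice}, one first reduces to the case where $w_2=w w_3$ with $w_3\in W_{\widehat\alpha}$ and $w$ of minimal length simultaneously in $W_{P(\tau)}w$ and in $wW_{\widehat\alpha}$. Relations $1$, $4$, $5$ absorb the factor $\f_{u_2}$ to the right of $\f_{\tau_\alpha}$ and relation $7$ moves $\f_{w_3}$ past $\f_{\tau_\alpha}$, so that $\vartheta'(f_x)\,\f_{\tau_\alpha}$ acquires the block $\f_\tau\f_w\f_{\tau_\alpha}$. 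By lemma \ref{lemmaprodottotau0} one has $P(\tau)\cap P(w,\alpha)=\emptyset$, so relation $8$ lets us write $\f_\tau=\f_{\tau'}\f_{\tau_{P(w,\alpha)}}$ for a suitable $\tau'\in\bm\Delta$; inserting $\f_{w^{-1}}\f_w=1$ and invoking relation $9$ replaces $\f_{\tau_{P(w,\alpha)}}\f_w\f_{\tau_\alpha}\f_{w^{-1}}$ by $q^{\ell(w)}\,\f_{\tau_{Q(w,\alpha)}}\sum_u\f_u$, where $u$ runs over a system of representatives of $(U\cap wU^-w^{-1})K^1/K^1$. Thus $\vartheta'(f_x)\,\f_{\tau_\alpha}$ equals $q^{\ell(w)}$ times a sum over $u$ of products, and what remains is to bring each of these products into the form (\ref{prodottogeneratori}): the unipotent factor $\f_u$ (with $u\in U\cap wU^-w^{-1}$) and the residual Weyl, torus and $\tau_0$-factors have to be shuffled into their slots, exactly as in the proof of lemma \ref{lemmadecompminim}, the minimality of $w$ guaranteeing that all the conjugates that arise stay in $U$ or in $M_{P(\tau')}^{\pm}$, where relations $4$--$6$ and lemma \ref{coroldefA} apply. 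As a consistency check, applying $\vartheta$ to the $\Z$-combination so obtained returns $f_x*f_{\tau_\alpha}$ as computed directly from (\ref{eqprodotto2}) and propositions \ref{proptaueu}, \ref{proprel9}.

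I expect the last step of the $\tau_\alpha$ case to be the main obstacle. Once relation $9$ has produced the sum $\sum_u\f_u$ with $u\in U\cap wU^-w^{-1}$, these unipotent factors sit between a $\f_\tau$-type factor and Weyl-type factors, and extracting them to the two outer $U$-slots of the form (\ref{prodottogeneratori}) requires the delicate bookkeeping of lemmas \ref{lemmadecompminim} and \ref{lemmadecompminimale} and leans essentially on the minimality of $w$ (so that the conjugates produced are unipotent of the right sign and land where relations $4$--$6$ or lemma \ref{coroldefA} can be applied). It is also here that the exact form of relation $9$ (equivalently, proposition \ref{proprel9}) is indispensable — both the precise indexing set of the $u$'s and the integer coefficient $q^{\ell(w)}$, which is what keeps everything over $\Z$ — since a coarser relation would not reproduce the convolution $f_x*f_{\tau_\alpha}$.
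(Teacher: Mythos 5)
Your top-level reduction — argue that $\mathscr{B}=\mathrm{Im}(\vartheta')$ is a right ideal containing $1$, hence all of $\mathscr{A}$, and reduce this to showing $\vartheta'(f_x)\f_\omega\in\mathscr{B}$ for $x\in G$, $\omega\in\Omega$ — is correct and is precisely the skeleton of the paper's proof (the paper phrases it with the free monoid on the generators, but the content is the same, and both use lemma \ref{lemmaprodotto2} as the main bookkeeping tool). Your treatment of $\omega=\tau_0^{\pm1}$ and most of the $\omega=\tau_\alpha$ case (splitting off $\f_{\tau_{P(w,\alpha)}}$ via lemma \ref{lemmaprodottotau0}, minimality via proposition \ref{proplongmin2}, inserting $\f_{w^{-1}}\f_w$, invoking relation $9$) also tracks the paper.

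However, there is a genuine gap at the $\omega=k\in K$ case, and it is not a small one because the $\tau_\alpha$ case in turn relies on it. You claim that after re-expanding $u_2k$ you can ``commute the new Weyl and torus factors leftward through $\f_\tau$'' using relations $1$, $3$, lemma \ref{coroldefA} and remark \ref{rmktau0}. Torus factors do move, and unipotent factors in $U_{P(\tau)}^{\pm}$ or $M_{P(\tau)}^{\pm}$ move or get absorbed — but a Weyl factor $\f_w$ commutes with $\f_\tau$ only when $w\in W_{P(\tau)}$ (relations $7$ and $8$). For a general $w$ you have no relation that carries $\f_w$ past $\f_\tau$. This is exactly where the paper does real work: it first reduces $K$ to $U\cup T\cup\{s_\alpha\}$ via $K=K^1UTWUK^1$, and then for $\omega=s_\alpha$ it decomposes $u_2=v_1v_2$ with $v_1\in U_\alpha$, $v_2\in U_{\{\alpha\}}^+$, so that $sv_2s\in U$ while $sv_1s\in U_{-\alpha}$; when $v_1\notin K^1$ it uses the rank-one Bruhat decomposition $sv_1s=k_1v_3st'v_4k_2$ and then splits into \emph{three} cases according to whether $\alpha$ lies in $w_2^{-1}\mathbf{\Psi}_{P(\tau)}^+$, in $w_2^{-1}\mathbf{\Phi}_{P(\tau)}^+\cap w_2^{-1}w_1^{-1}\mathbf{\Phi}^-$, or in $w_2^{-1}\mathbf{\Phi}_{P(\tau)}^+\cap w_2^{-1}w_1^{-1}\mathbf{\Phi}^+$, each case dictating where the offending lower-unipotent piece gets absorbed by lemma \ref{coroldefA}. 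Your sketch supplies none of this, and there is no way to get it from ``commuting leftward''.

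Secondarily, your plan for finishing the $\tau_\alpha$ case — normalize the sum $q^{\ell(w)}\sum_u(\cdots)\f_u\f_{w_2}\f_{\tau_\alpha^{-1}v_1\tau_\alpha}$ directly back into the shape (\ref{prodottogeneratori}) — is harder than what the paper does, and you yourself flag it as the main obstacle. The paper avoids this by observing that, after relation $9$, the remaining right-hand factors $\f_{w_2}$ and $\f_{\tau_\alpha^{-1}v_1\tau_\alpha}$ are $K$-type generators, so one can appeal to the $\omega\in U$, $\omega\in T$ and $\omega=s_\alpha$ cases already established; no further normalization is needed. This makes the correctness of the $s_\alpha$ case even more essential, reinforcing that the gap above is where the argument actually breaks.
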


\begin{proof}
Let $\mathbf{M}$ be the free multiplicative monoid on the set of $\tilde{\f}_\omega$ with $\omega\in\Omega$,
let $\mathbf{A}$ be the multiplicative submonoid of $\mathscr{A}$ generated by the $\f_\omega$ with $\omega\in\Omega$
and let $\phi:\mathbf{M}\longrightarrow \mathbf{A}$ be the natural projection.  
Since $\vartheta'$ is a homomorphism of $\Z$-modules, it is sufficient to prove that every element $\mathbf{a}\in \mathbf{A}$ (or equivalently  every $\phi(\mathbf{m})$ with $\mathbf{m}\in \mathbf{M}$) is in the image of $\vartheta'$. 
We prove this result by induction on the length of $\mathbf{m}\in \mathbf{M}$. 
We have $\phi(\tilde{\f}_\omega)=\f_\omega=\vartheta'(f_\omega)$ for every $\omega\in\Omega$. 
Now we suppose $\phi(\mathbf{m})=\vartheta'(\Phi_{\mathbf{m}})\in\mathscr{A}$ with $\Phi_{\mathbf{m}}\in\mathscr{H}(G,K^1)$ 
and we prove that $\phi(\mathbf{m}\tilde{\f}_\omega)=\vartheta'(\Phi_{\mathbf{m}})\f_\omega$ is in the image of $\vartheta'$ for every $\omega\in\Omega$.
Thanks to 1 of definition \ref{defA}, to remark \ref{rmkscomposizione} and to fact that $\vartheta'$ is a homomorphism of $\Z$-modules, it is sufficient to prove that for every $x\in G$ and every $\omega\in U\cup T\cup \{\tau_0,\tau_0^{-1}\}\cup\{s_{\alpha},\tau_{\alpha}\,|\,\alpha\in\Sigma\}$ the element $\vartheta'(f_x)\f_\omega$ is in the image of $\vartheta'$.
So, from now on,
we fix $x\in G$ and we consider the unique decomposition $f_x=f_{u_1}f_tf_{\tau_0}^if_{w_1}f_\tau f_{w_2}f_{u_2}$ of $f_x$ of the form (\ref{prodottogeneratori2}).
\begin{enumerate}[$\bullet$]
\item If $\tilde u\in U$, by 1 of definition \ref{defA} and by lemma \ref{lemmaprodotto2} we obtain  
$\vartheta'(f_x)\f_{\tilde{u}}=\f_{u_1}\f_t\f_{\tau_0}^i\f_{w_1}\f_\tau\f_{w_2}\f_{u_2\tilde{u}}
=\vartheta'(f_{u_1}f_tf_{\tau_0}^if_{w_1}f_\tau f_{w_2}f_{u_2\tilde{u}})$.
\item If $\tilde t\in T$, by 1 and 3 of definition \ref{defA}, by remark \ref{rmktau0} and by lemma \ref{lemmaprodotto2} we obtain  
$\vartheta'(f_x)\f_{\tilde{t}}
=\f_{u_1} \f_{t(\tau_0^iw_1\tau w_2)\tilde{t}(\tau_0^{i}w_1\tau w_2)^{-1}}\f_{\tau_0}^i\f_{w_1}\f_\tau \f_{w_2}\f_{\tilde{t}^{-1}u_2\tilde{t}}
=\vartheta'(f_{u_1} f_{t(\tau_0^iw_1\tau w_2)\tilde{t}(\tau_0^{i}w_1\tau w_2)^{-1}}f_{\tau_0}^if_{w_1}f_\tau f_{w_2}f_{\tilde{t}^{-1}u_2\tilde{t}})$.
\item By remark \ref{rmktau0} 
and by lemma \ref{lemmaprodotto2} we obtain
$\vartheta'(f_x)\f_{\tau_0^{\,\varepsilon}}
=\f_{u_1}\f_t\f_{\tau_0}^{i+\varepsilon}\f_{w_1}\f_\tau \f_{w_2}
\f_{\tau_0^{-\varepsilon}u_2\tau_0^{+\varepsilon}}
=\vartheta'(f_{u_1}f_tf_{\tau_0}^{i+\varepsilon}f_{w_1}f_\tau f_{w_2}
f_{\tau_0^{-\varepsilon}u_2\tau_0^{+\varepsilon}})$ for $\varepsilon\in\{1,-1\}$.
\item Let $\alpha\in\Sigma$ and $s=s_{\alpha}$. 
Since $\alpha$ is simple, 
$U_{\{\alpha\}}^+=\prod_{\alpha'\in\mathbf{\Phi}^+\setminus\{\alpha\}}U_{\alpha'}$ is a group and we have $U\cap w_2^{-1}Uw_2=(U_{\alpha}\cap w_2^{-1}Uw_2) (U_{\{\alpha\}}^+\cap w_2^{-1}Uw_2)$.  
We take $u_2=v_1v_2$ according to this decomposition. 
Then $sv_2s\in U$, $sv_1s\in U_{-\alpha}$ and by 1 of definition \ref{defA} we have
$\vartheta'(f_x)\f_{s}=\f_{u_1}\f_t\f_{\tau_0}^i\f_{w_1}\f_\tau \f_{w_2s}\f_{sv_1s}\f_{sv_2s}$.
If $v_1\in K^1$ then
$\vartheta'(f_x)\f_{s}=\vartheta'(f_{u_1}f_tf_{\tau_0}^if_{w_1}f_\tau f_{w_2s}f_{s u_2s})$ by lemma \ref{lemmaprodotto2}.
From now on we suppose $v_1\notin K^1$. 
Using remark \ref{rmkscomposizione}, it is easy to show that there exist $k_1,k_2\in K^1$, $v_3,v_4\in U_{\alpha}$ and $t'\in T$ such that $sv_1s=k_1v_3st'v_4k_2$ and so by 1 of definition \ref{defA} we obtain 
$\vartheta'(f_x)\f_{s}=
\f_{u_1}\f_t\f_{\tau_0}^i\f_{w_1}\f_\tau \f_{w_2s}\f_{v_3st'v_4}\f_{sv_2s}
=\f_{u_1}\f_t\f_{\tau_0}^i\f_{w_1}\f_{\tau}\f_{w_2sv_3sw_2^{-1}}\f_{w_2t'w_2^{-1}}\f_{w_2}\f_{v_4s v_2s}$.
Moreover $v_1\notin K^1$ implies that $U_\alpha\subset w_2^{-1}Uw_2$ and so $\alpha\in w_2^{-1}\mathbf{\Phi}^+$. 
We can distinguish three cases.
\begin{enumerate}[$\star$]
	\item If $\alpha\in w_2^{-1}\mathbf{\Psi}_{P(\tau)}^+$ then $w_2s v_3s w_2^{-1}\in U_{P(\tau)}^-$ 
	and we obtain 
\begin{align*}
\vartheta'(f_x)\f_{s}&
=\f_{u_1}\f_t\f_{\tau_0}^i\f_{w_1}\f_{\tau}\f_{w_2sv_3sw_2^{-1}}\f_{w_2t'w_2^{-1}}\f_{w_2}\f_{v_4s v_2s}\\
\text{\scriptsize{(lemma \ref{coroldefA})}}	
&=\f_{u_1}\f_t\f_{\tau_0}^i\f_{w_1}\f_{\tau}\f_{w_2t'w_2^{-1}}\f_{w_2}\f_{v_4s v_2s} \\
\text{\scriptsize{(1,3 of def. \ref{defA}, remark \ref{rmktau0})}}
&=\f_{u_1}\f_{t\tau_0^iw_1\tau w_2t'(\tau_0^iw_1\tau w_2)^{-1}}\f_{\tau_0}^i\f_{w_1}\f_\tau\f_{w_2}\f_{v_4s v_2s}\\
\text{\scriptsize{(lemma \ref{lemmaprodotto2})}}
&=\vartheta'( f_{u_1} f_{t\tau_0^iw_1\tau w_2t'(\tau_0^iw_1\tau w_2)^{-1}} f_{\tau_0}^i f_{w_1} f_\tau f_{w_2} f_{v_4s v_2s}).
\end{align*}
	\item 
	If $\alpha\in w_2^{-1}\mathbf{\Phi}_{P(\tau)}^+\cap w_2^{-1}w_1^{-1}\mathbf{\Phi}^-$ 
	then $w_2sv_3sw_2^{-1}\in M_{P(\tau)}^-$ and $w_1\tau w_2s v_3(w_1\tau w_2s)^{-1}\in U$. 
We obtain 
\begin{align*}
\hspace{-1cm}
\vartheta'(f_x)\f_{s}&
=\f_{u_1}\f_t\f_{\tau_0}^i\f_{w_1}\f_{\tau}\f_{w_2sv_3sw_2^{-1}}\f_{w_2t'w_2^{-1}}\f_{w_2}\f_{v_4s v_2s}\\
\text{\scriptsize{(lemma \ref{coroldefA})}} 
&=\f_{u_1}\f_t\f_{\tau_0}^i\f_{w_1}\f_{\tau w_2sv_3sw_2^{-1}\tau^{-1}}\f_{\tau}\f_{w_2t'w_2^{-1}}\f_{w_2}\f_{v_4s v_2s}\\
\text{\scriptsize{(1 def. \ref{defA}, rmk \ref{rmktau0})}}
&=\f_{u_1(t\tau_0^iw_1\tau w_2s) v_3 (t\tau_0^iw_1\tau w_2s)^{-1}}\f_t\f_{\tau_0}^i\f_{w_1}\f_{\tau}\f_{w_2t'w_2^{-1}}\f_{w_2}\f_{v_4s v_2s}\\
\text{\scriptsize{(1,3 def. \ref{defA}, rmk \ref{rmktau0})}}
&=\f_{u_1t\tau_0^iw_1\tau w_2s v_3(t\tau_0^iw_1\tau w_2s)^{-1}}\f_{t\tau_0^iw_1\tau w_2 t'(\tau_0^iw_1\tau w_2)^{-1}}\f_{\tau_0}^i\f_{w_1}
\f_\tau\f_{w_2}\f_{v_4s v_2s}\\
\text{\scriptsize{(lemma \ref{lemmaprodotto2})}}
&=\vartheta'(  f_{u_1t\tau_0^iw_1\tau w_2s v_3(t\tau_0^iw_1\tau w_2s)^{-1}} f_{t\tau_0^iw_1\tau w_2 t'(\tau_0^iw_1\tau w_2)^{-1}} f_{\tau_0}^i f_{w_1} f_\tau f_{w_2} f_{v_4s v_2s}).
\end{align*}
\item If $\alpha\in w_2^{-1}\mathbf{\Phi}_{P(\tau)}^+\cap w_2^{-1}w_1^{-1}\mathbf{\Phi}^+$ then
$w_2v_1w_2^{-1}\in M_{P(\tau)}^-$ and $w_1\tau w_2v_1 (w_1\tau w_2)^{-1}\in U$. 
We obtain
\begin{align*}
\vartheta'(f_x)\f_{s}&
=\f_{u_1}\f_t\f_{\tau_0}^i\f_{w_1}\f_\tau \f_{w_2s}\f_{sv_1s}\f_{sv_2s}\\
\text{\scriptsize{(1 of def. \ref{defA})}} 
&=\f_{u_1}\f_t\f_{\tau_0}^i\f_{w_1}\f_\tau \f_{w_2v_1w_2^{-1}}\f_{w_2s}\f_{sv_2s}\\
\text{\scriptsize{(lemma \ref{coroldefA})}}
&=\f_{u_1}\f_t\f_{\tau_0}^i\f_{w_1}\f_{\tau w_2v_1w_2^{-1}\tau^{-1}}\f_\tau \f_{w_2s}\f_{sv_2s}\\
\text{\scriptsize{(1 of def. \ref{defA}, remark \ref{rmktau0})}}
&=\f_{u_1t\tau_0^iw_1\tau w_2v_1 (t\tau_0^iw_1\tau w_2)^{-1}}\f_t\f_{\tau_0}^i\f_{w_1}\f_\tau\f_{w_2s}\f_{s v_2s}\\
\text{\scriptsize{(lemma \ref{lemmaprodotto2})}}
&= \vartheta'(f_{u_1t\tau_0^iw_1\tau w_2v_1 (t\tau_0^iw_1\tau w_2)^{-1}} f_t f_{\tau_0}^i f_{w_1} f_\tau f_{w_2s} f_{s v_2s}).
\end{align*}
\end{enumerate}
So we have proved that $\vartheta'(f_x)\f_{s_\alpha}$ is in the image of $\vartheta'$ for every $\alpha\in \Sigma$.
\item Let $\alpha\in \Sigma$ and $u_2=v_1v_2$ be according to the decomposition $U=M_{\widehat\alpha}^+U_{\widehat\alpha}^+$. 
By 4 and 5 of definition \ref{defA} we have 
$\vartheta'(f_x)\f_{\tau_\alpha}=\f_{u_1}\f_t\f_{\tau_0}^i\f_{w_1}\f_{\tau}\f_{w_2}\f_{\tau_\alpha}\f_{\tau_{\alpha}^{-1}v_1\tau_{\alpha}}$. 
If $w$ is the element of minimal length in $w_2W_{\widehat\alpha}\in W/W_{\widehat\alpha}$ then by proposition \ref{proplongmin2} it is of minimal length also in $W_{P(\tau)}w$. 
If we set $P=P(w,\alpha)$ and $Q=Q(w,\alpha)$ then by lemma \ref{lemmaprodottotau0} we obtain 
$\tau\tau_P^{-1}\in\bm\Delta$ and so
\begin{align*}
\vartheta'(f_x)\f_{\tau_\alpha}&
=\f_{u_1}\f_t\f_{\tau_0}^i\f_{w_1}\f_{\tau}\f_{w_2}\f_{\tau_\alpha}\f_{\tau_{\alpha}^{-1}v_1\tau_{\alpha}}\\
\text{\scriptsize{(1,8 of def. \ref{defA})}}
&=\f_{u_1}\f_t\f_{\tau_0}^i\f_{w_1}\f_{\tau\tau_P^{-1}}\f_{\tau_P}\f_{w}\f_{w^{-1}w_2}\f_{\tau_\alpha}\f_{\tau_{\alpha}^{-1}v_1\tau_{\alpha}}\\
\text{\scriptsize{(7 of def. \ref{defA})}}
&=\f_{u_1}\f_t\f_{\tau_0}^i\f_{w_1}\f_{\tau\tau_P^{-1}}\f_{\tau_P}\f_{w}\f_{\tau_\alpha}\f_{w^{-1}w_2}\f_{\tau_{\alpha}^{-1}v_1\tau_{\alpha}}\\
\text{\scriptsize{(1,9 of def. \ref{defA})}}
&=q^{\ell(w)}\sum_{u}  
\f_{u_1}\f_t\f_{\tau_0}^i\f_{w_1}\f_{\tau\tau_P^{-1}} \f_{\tau_Q} \f_u   \f_{w_2}\f_{\tau_{\alpha}^{-1}v_1\tau_{\alpha}}\\
\text{\scriptsize{(8 of def. \ref{defA})}}
&=q^{\ell(w)}\sum_{u}
\Big(\f_{u_1}\f_t\f_{\tau_0}^i\f_{w_1}\f_{\tau\tau_P^{-1}\tau_Q} \f_u\Big) \f_{w_2}\f_{\tau_{\alpha}^{-1}v_1\tau_{\alpha}}\\
\text{\scriptsize{(lemma \ref{lemmaprodotto2})}}
&=q^{\ell(w)}\sum_{u} 
\vartheta'(f_{u_1}f_tf_{\tau_0}^if_{w_1}f_{\tau\tau_P^{-1}\tau_Q} f_u) \f_{w_2}\f_{\tau_{\alpha}^{-1}v_1\tau_{\alpha}}.
\end{align*}
By the above calculations, the element $\vartheta'(f_{u_1}f_tf_{\tau_0}^if_{w_1}f_{\tau\tau_P^{-1}\tau_Q} f_u) \f_{w_2}\f_{\tau_{\alpha}^{-1}v_1\tau_{\alpha}}$ is in the image of $\vartheta'$ for every $u\in U\cap wUw^{-1}$ and so $\vartheta'(f_x)\f_{\tau_\alpha}$ too.
\end{enumerate}
So we can conclude that $\vartheta'$ is a surjective homomorphism of $\Z$-modules.
\end{proof}

We now have all the tools necessary to prove the main result.

\begin{corol}\label{isomalgebre}
The algebras $\mathscr{A}$ and $\mathscr{H}(G,K^1)$ are isomorphic.
\end{corol}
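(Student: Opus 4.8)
The plan is to show that the surjective algebra homomorphism $\vartheta:\mathscr{A}\to\mathscr{H}(G,K^1)$ is in fact bijective, using $\vartheta'$ as a one-sided inverse. All the substantive work has already been done: $\vartheta$ is a surjective homomorphism of $\Z$-algebras, and by Proposition \ref{prodotto} the $\Z$-module homomorphism $\vartheta'$ of (\ref{inversa2}) is surjective. So the only thing left to extract is that the composite $\vartheta\circ\vartheta'$ is the identity, after which injectivity of $\vartheta$ is a formal consequence.

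First I would check that $\vartheta\circ\vartheta'=\mathrm{id}_{\mathscr{H}(G,K^1)}$. Since both maps are $\Z$-linear, it suffices to test this on the basis $\{f_x\mid x\in\Xi\}$. Fix $x\in\Xi$ and let $f_x=f_{u_1}f_{t}f_{\tau_0}^if_{w_1}f_\tau f_{w_2}f_{u_2}$ be its unique decomposition of the form (\ref{prodottogeneratori2}) provided by Lemma \ref{lemmadecompminimale}; this is precisely the decomposition used to define $\vartheta'(f_x)=\f_{u_1}\f_{t}\f_{\tau_0}^i\f_{w_1}\f_\tau \f_{w_2}\f_{u_2}$. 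Applying $\vartheta$, which sends $\f_\omega\mapsto f_\omega$ and is multiplicative, gives $\vartheta(\vartheta'(f_x))=f_{u_1}f_{t}f_{\tau_0}^if_{w_1}f_\tau f_{w_2}f_{u_2}=f_x$. Hence $\vartheta\circ\vartheta'=\mathrm{id}_{\mathscr{H}(G,K^1)}$.

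Then I would deduce injectivity of $\vartheta$: given $a\in\mathscr{A}$ with $\vartheta(a)=0$, surjectivity of $\vartheta'$ lets us write $a=\vartheta'(\Phi)$ for some $\Phi\in\mathscr{H}(G,K^1)$, whence $\Phi=\vartheta(\vartheta'(\Phi))=\vartheta(a)=0$ and therefore $a=\vartheta'(0)=0$. Combined with the already established surjectivity of $\vartheta$, this shows that $\vartheta$ is a bijective homomorphism of $\Z$-algebras, hence an isomorphism (and, as a bonus, $\vartheta'=\vartheta^{-1}$ is automatically an algebra homomorphism). Nothing in this final step is delicate; the real difficulty lay entirely in the preparatory results — in particular in verifying (Lemma \ref{lemmadecompminimale}) that the relations of Definition \ref{defA} are strong enough to force uniqueness of the decomposition (\ref{prodottogeneratori2}), which is what makes $\vartheta'$ well defined, and in the induction of Proposition \ref{prodotto} showing that $\vartheta'$ hits every monomial in the generators $\f_\omega$.
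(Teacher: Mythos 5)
Your proposal is correct and follows essentially the same route as the paper: establish $\vartheta\circ\vartheta'=\mathrm{id}$ by checking on the basis $\{f_x\}$ via the defining decomposition (\ref{prodottogeneratori2}), then combine this with the surjectivity of $\vartheta'$ from Proposition \ref{prodotto} to deduce that $\vartheta$ is bijective. The paper phrases the last step as $\vartheta'\circ\vartheta=\mathrm{id}$ rather than directly verifying $\ker\vartheta=0$, but this is the same formal argument.
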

\begin{proof}
We have constructed a surjective homomorphism of algebras $\vartheta:\mathscr{A}\longrightarrow \mathscr{H}(G,K^1)$ and a surjective homomorphism of $\Z$-modules $\vartheta':\mathscr{H}(G,K^1)\longrightarrow \mathscr{A}$ such that $\vartheta(\vartheta'(\Phi))=\Phi$ 
for every $\Phi\in\mathscr{H}(G,K^1)$. 
This implies $\vartheta'(\vartheta(\vartheta'(\Phi)))=\vartheta'(\Phi)$ for every $\Phi\in\mathscr{H}(G,K^1)$ and so 
$\vartheta'(\vartheta(\mathbf{a}))=\mathbf{a}$ for every $\mathbf{a}\in\mathscr{A}$
Thus $\vartheta$ is an isomorphism of algebras whose inverse is $\vartheta'$.
\end{proof}

\section{Level-$0$ representations}
In this section we turn to the study of the category of smooth representations of a direct product of inner forms of general linear groups 
over non-archimedean locally compact fields. 
We prove that its level-$0$ subcategory is equivalent to the category of modules over the algebra described in the previous section.

\smallskip
Let us use notations of the beginning of section 2. 
We denote $G=G_1\times\cdots\times G_r$ and $K^1=K_1^1\times\cdots\times K_r^1$.
Let $R$ be a unitary commutative ring such that $p\in R^{\times}$, let $\mathscr{H}_R(G,K^1)=\mathscr{H}(G,K^1)\otimes_\Z R$ (see remark \ref{remR}) and let $\mathscr{R}_R(G)$ be the category of smooth representations of $G$ over $R$. 
From now on all representations that we consider are smooth. 
If $(\pi,V)$ is a representation of $G$ we denote by $V^{K^1}$ the set $\{v\in V\,|\,\pi(k)v=v \text{ for every }k\in K^1\}$ of $K^1$-invariant vectors of $V$.

\begin{defin}
A representation $(\pi,V)$ of $G$ is called \emph{a level-$0$ representation} if it is generated by its $K^1$-invariant vectors. 
We denote by $\mathscr{R}^0_R(G)$ the full subcategory of $\mathscr{R}_R(G)$ of level-$0$ representations, which we call \emph{level-$0$ subcategory} of $\mathscr{R}_R(G)$. 
\end{defin}

Since $p$ is invertible in $R$ we can choose an Haar measure $dg$ on $G$ with values in $R$ such that $\int_{K^1}dg=1$. 
Then we can define the global Hecke algebra $\mathscr{H}_R(G)$ (see \cite{Vig2} I.3.1) as the $R$-algebra of locally constant and compactly supported functions $f:G\longrightarrow R$ endowed with the convolution product given by $(f_1*f_2)(x)=\int_{G}f_1(g)f_2(g^{-1}x)dg$ for every $x\in G$. 
We recall that $\mathscr{R}_R(G)$ is equivalent to the category of (left) modules over $\mathscr{H}_R(G)$ (see \cite{Vig2} I.4.4). 

\smallskip
Let us use section I.6 of \cite{Vig2} with $A=\mathscr{H}_R(G)$ and $e$ the characteristic function of $K^1$. 
The algebra $eAe$ is the algebra $\mathscr{H}_R(G,K^1)$ and, thanks to equivalence of categories above, the functor $V\longmapsto eV$ becomes the functor $\inv_{K^1}$ of $K^1$-invariants from $\mathscr{R}_R(G)$ to the category of right modules over $\mathscr{H}_R(G,K^1)$ defined by 
$\inv_{K^1}(V)=V^{K^1}$ with $\Phi\in\mathscr{H}_R(G,K^1)$ acting on $v\in V^{K^1}$ by $\sum_{x\in K^1\bs G}\Phi(x)\pi(x^{-1})v$. 
Now, since $\mathscr{H}_R(G)$ is an algebra with enough idempotents (see \cite{Vig2} I.3.2), hypothesis of I.6.6 are satisfied and then we obtain the following result.

\begin{teor}\label{teoremaequivalenzacategorie}
If every non-zero subrepresentation $W$ of any object $V$ in $\mathscr{R}^0_R(G)$ verify $W^{K^1}\neq 0$ then $\inv_{K^1}$ is an equivalence between $\mathscr{R}^0_R(G)$ and the category of right modules over $\mathscr{H}_R(G,K^1)$.
\end{teor}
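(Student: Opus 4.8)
The plan is to deduce this from the general theory of idempotents in rings with enough idempotents, in the form packaged in Section I.6 of \cite{Vig2}. Set $A=\mathscr{H}_R(G)$ and let $e\in A$ be the characteristic function of $K^1$; since $p\in R^\times$ and the Haar measure is normalized by $\int_{K^1}dg=1$, the element $e$ is idempotent and $eAe=\mathscr{H}_R(G,K^1)$. Under the equivalence between $\mathscr{R}_R(G)$ and the category of nondegenerate $A$-modules, the functor $\inv_{K^1}$ becomes $M\mapsto eM$, the $g\mapsto g^{-1}$ twist in the action formula accounting for the passage to right $\mathscr{H}_R(G,K^1)$-modules, and a representation is of level $0$ precisely when the corresponding module $M$ satisfies $M=AeM$. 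So the statement reduces to a property of the functor $e(-)$ on nondegenerate $A$-modules.

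First I would record the adjunction: $e(-)=\Hom_A(Ae,-)$ has left adjoint $F=Ae\otimes_{eAe}(-)$, and the unit $N\to e(Ae\otimes_{eAe}N)=eAe\otimes_{eAe}N$ is the canonical isomorphism. Hence $F$ is fully faithful, and $e(-)$ restricts to an equivalence between the essential image of $F$ — which, since the unit is an isomorphism, equals $\{M:\varepsilon_M\text{ is an isomorphism}\}$ where $\varepsilon_M\colon Ae\otimes_{eAe}eM\to M$ is the counit — and the whole category of $eAe$-modules. Thus the theorem amounts to identifying this essential image with $\mathscr{R}^0_R(G)$.

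One inclusion is formal. For any $eAe$-module $N$, the module $Ae\otimes_{eAe}N$ is nondegenerate (as $A\cdot Ae=Ae$ in a ring with enough idempotents), hence a smooth representation, and it is generated by its submodule $e(Ae\otimes_{eAe}N)\cong N$, hence of level $0$. For the reverse inclusion, take $M\in\mathscr{R}^0_R(G)$, i.e.\ $M=AeM$; then $\varepsilon_M$ is surjective, and it remains to prove injectivity. This is where the hypothesis is used. The functor $e(-)$ is exact (elementarily: $eM'=M'\cap eM$ for any submodule $M'\subseteq M$), so applying it to $0\to\ker\varepsilon_M\to Ae\otimes_{eAe}eM\to M\to 0$ and using that $e\varepsilon_M$ is an isomorphism (triangle identity, since the unit is an isomorphism) gives $e(\ker\varepsilon_M)=0$. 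But $\ker\varepsilon_M$ is a subrepresentation of $Ae\otimes_{eAe}eM$, which by the previous paragraph lies in $\mathscr{R}^0_R(G)$; so the standing hypothesis forces $\ker\varepsilon_M=0$. Hence $\varepsilon_M$ is an isomorphism, $M$ lies in the essential image of $F$, and $F$ and $e(-)=\inv_{K^1}$ are mutually inverse equivalences between $\mathscr{R}^0_R(G)$ and the category of (right) $\mathscr{H}_R(G,K^1)$-modules.

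The only genuine difficulty is the injectivity of the counit, and the stated hypothesis is exactly what supplies it; everything else is bookkeeping with the adjunction and with the exactness of $e(-)$, for which one needs only that $\mathscr{H}_R(G)$ is a ring with enough idempotents (\cite{Vig2}, I.3.2). So the most economical write-up is to verify that $\mathscr{R}^0_R(G)$ is precisely the subcategory of modules generated by their $e$-part and that our hypothesis matches the hypothesis of \cite{Vig2}, I.6.6, and then invoke that result.
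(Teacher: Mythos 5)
Your proposal is correct and follows essentially the same route as the paper: the paper sets $A=\mathscr{H}_R(G)$, $e$ the characteristic function of $K^1$, identifies $eAe$ with $\mathscr{H}_R(G,K^1)$ and $\inv_{K^1}$ with $V\mapsto eV$, and then invokes Vign\'eras \cite{Vig2} I.6.6 (valid for rings with enough idempotents). The only difference is that you unpack the proof of that cited general statement via the adjunction $Ae\otimes_{eAe}(-)\dashv e(-)$ and the exactness of $e(-)$, whereas the paper cites it directly; the mathematical content is the same, and your final paragraph already observes this.
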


By proposition 6.3 of \cite{Dat} (see also \cite{Vig2} II.5 when $R$ is a field) there exists a decomposition $\mathscr{R}_R(G)=\mathscr{R}^0_R(G)\oplus \mathscr{R}^{>0}_R(G)$ that means that:
\begin{itemize}
\item for every $R$-representation $V$ of $G$ there exist an object $V_1$ of $\mathscr{R}^0_R(G)$ and an object $V_2$ of $\mathscr{R}^{>0}_R(G)$ such that $V=V_1\oplus V_2$;
\item for every object $V_1$ of $\mathscr{R}^0_R(G)$ and every object $V_2$ of $\mathscr{R}^{>0}_R(G)$ we have $\Hom_G(V_1,V_2)=0$.
\end{itemize}
This fact implies that level-$0$ representations are exactly those whose all irreducible subquotients have non-zero $K^1$-invariant vectors. 
Hence, if $W$ is a subrepresentation of an object of $\mathscr{R}^0_R(G)$ then $W^{K^1}$ must be non-zero and so theorem \ref{teoremaequivalenzacategorie} implies the following.

\begin{corol}\label{corolequivalenza}
The level-$0$ subcategory $\mathscr{R}^0_R(G)$ of $\mathscr{R}_R(G)$ is equivalent to the category of right modules over $\mathscr{H}_R(G,K^1)$.
\end{corol}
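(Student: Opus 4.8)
The plan is to obtain the equivalence directly from Theorem \ref{teoremaequivalenzacategorie}, whose conclusion is precisely the assertion of the corollary; thus everything reduces to verifying the hypothesis of that theorem, namely that every non-zero subrepresentation $W$ of any object $V$ of $\mathscr{R}^0_R(G)$ satisfies $W^{K^1}\neq 0$. The single external ingredient I would bring in is the block-type decomposition $\mathscr{R}_R(G)=\mathscr{R}^0_R(G)\oplus\mathscr{R}^{>0}_R(G)$ supplied by proposition 6.3 of \cite{Dat} (for $R$ a field, this is II.5 of \cite{Vig2}); it is available exactly because $K^1$ is a pro-$p$-group and $p\in R^{\times}$, which makes the functor $V\mapsto V^{K^1}$ exact. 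Note that no separate treatment of the factor structure $G=G_1\times\cdots\times G_r$ is needed, since Theorem \ref{teoremaequivalenzacategorie}, the decomposition, and the exactness of $K^1$-invariants are all stated for this $G$.

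First I would unwind what the decomposition provides: every smooth $R$-representation $V$ splits canonically as $V=V_1\oplus V_2$ with $V_1\in\mathscr{R}^0_R(G)$ and $V_2\in\mathscr{R}^{>0}_R(G)$, and there are no non-zero homomorphisms between the two summands. Consequently $\mathscr{R}^0_R(G)$ is a direct factor, hence in particular a Serre subcategory, of $\mathscr{R}_R(G)$; in particular it is stable under passing to subrepresentations. Equivalently — and this is the reformulation I would write out — a representation lies in $\mathscr{R}^0_R(G)$ if and only if each of its irreducible subquotients has non-zero $K^1$-fixed vectors, a condition visibly inherited by subrepresentations.

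Then, given $0\neq W\subseteq V\in\mathscr{R}^0_R(G)$, the previous step shows $W\in\mathscr{R}^0_R(G)$, i.e. $W$ is generated by $W^{K^1}$; since $W\neq 0$, this forces $W^{K^1}\neq 0$. (Alternatively: choosing an irreducible subquotient $S$ of $W$, one has $S^{K^1}\neq 0$ by the characterization above, and exactness of $(-)^{K^1}$ then propagates the non-vanishing back up to $W^{K^1}$.) This is exactly the hypothesis of Theorem \ref{teoremaequivalenzacategorie}, so $\inv_{K^1}$ is an equivalence between $\mathscr{R}^0_R(G)$ and the category of right $\mathscr{H}_R(G,K^1)$-modules, which is the claim.

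The genuinely substantive point is the second paragraph: that ``being generated by $K^1$-invariant vectors'' is stable under subrepresentations. This would fail for a general compact open subgroup in place of $K^1$, and it is precisely where Dat's decomposition (resting on the invertibility of the pro-order of $K^1$ modulo the residue characteristic) does the real work; once it is invoked, the remainder is formal. A minor care point, if one wants the argument to run verbatim over an arbitrary commutative ring $R$ rather than only over a field, is the existence of irreducible subquotients of a non-zero smooth representation — which is why I would prefer to argue through the Serre-subcategory property directly rather than through the ``irreducible subquotients'' reformulation, reserving the latter for the situations where it is known to apply.
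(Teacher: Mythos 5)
Your proposal is correct and follows the paper's own route: reduce to Theorem \ref{teoremaequivalenzacategorie} and verify its hypothesis using the decomposition $\mathscr{R}_R(G)=\mathscr{R}^0_R(G)\oplus \mathscr{R}^{>0}_R(G)$ supplied by proposition 6.3 of \cite{Dat}. The paper phrases the needed closure-under-subrepresentations via the irreducible-subquotient characterization, while you argue directly from the direct-summand/Serre-subcategory property of $\mathscr{R}^0_R(G)$; your variant is a mild sharpening when $R$ is an arbitrary commutative ring rather than a field, exactly the care point you yourself flag.
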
 

\subsubsection*{Acknowledgements}
This work is part of the PhD thesis of the author and he wants to thank his supervisor, Vincent Sécherre, for his support and his comments on this paper. 
This research did not receive any specific grant from funding agencies in the public, commercial,
or not-for-profit sectors.

\bibliographystyle{amsplain}
\bibliography{paper1}

\providecommand{\bysame}{\leavevmode\hbox to3em{\hrulefill}\thinspace}
\providecommand{\MR}{\relax\ifhmode\unskip\space\fi MR }
\providecommand{\MRhref}[2]{%
  \href{http://www.ams.org/mathscinet-getitem?mr=#1}{#2}
}
\providecommand{\href}[2]{#2}
\begin{thebibliography}{10}

\bibitem{Bern}
Joseph Bernstein, \emph{Le "centre" de {B}ernstein}, Représentations des
  groupes réductifs sur un corps local, Travaux en cours. Hermann, Paris
  (1984), 1--32, Edited by P. Deligne.

\bibitem{Bourb1}
Nicolas Bourbaki, \emph{Lie groups and {L}ie algebras. {C}hapters 4-6},
  Springer-Verlag, Berlin, 2002.

\bibitem{BK2}
Colin~J. Bushnell and Philip~C. Kutzko, \emph{Smooth representations of
  reductive $p$-adic groups: structure theory via types}, Proc. London Math.
  Soc. (3) \textbf{77} (1998), 582--634,
  http://dx.doi.org/10.1112/S0024611598000574.

\bibitem{BK3}
\bysame, \emph{Semisimple types in {$GL_n$}}, Compositio Math. \textbf{119}
  (1999), no.~1, 53--97, http://dx.doi.org/10.1023/A:1001773929735.

\bibitem{Chin}
Gianmarco Chinello, \emph{Représentations $\ell$-modulaires des groupes
  $p$-adiques. {D}écomposition en blocs de la catégorie des représentations
  lisses de {$GL_m(D)$}, groupe métaplectique et représentation de {W}eil},
  Ph.D. thesis, Université de Versailles St-Quentin-en-Yvelines, 2015.

\bibitem{Dat}
Jean-Francois Dat, \emph{Finitude pour les représentations lisses de groupes
  $p$-adiques}, J. Inst. Math. Jussieu \textbf{8} (2009), 261--333,
  http://dx.doi.org/10.1017/S1474748008000054.

\bibitem{Dat3}
\bysame, \emph{Théorie de {L}ubin-{T}ate non-abélienne $\ell$-entière}, Duke
  Math. J. \textbf{161} (2012), no.~6, 951--1010,
  http://dx.doi.org/10.1215/00127094-1548425.

\bibitem{Gui}
David-Alexandre Guiraud, \emph{On semisimple $l$-modular {B}ernstein-blocks of
  a $p$-adic general linear group}, J. Number Theory \textbf{133} (2013),
  3524--3548, http://dx.doi.org/10.1016/j.jnt.2013.04.012.

\bibitem{Helm}
David Helm, \emph{The {B}ernstein center of the category of smooth
  {$W(k)[GL_n(F)]$}-modules}, Forum of Mathematics, Sigma \textbf{4} (2012),
  1--98, http://dx.doi.org/10.1017/fms.2016.10.

\bibitem{Krieg}
Aloys Krieg, \emph{Hecke algebras}, vol.~87, Mem., no. 435, Amer. Math. Soc.,
  1990, http://dx.doi.org/10.1090/memo/0435.

\bibitem{MS}
Alberto Minguez and Vincent Sécherre, \emph{Types modulo $\ell$ pour les formes
  intérieures de {$GL_n$} sur un corps local non archimédien}, Proc. London
  Math. Soc. (3) \textbf{109} (2014), 823--891, With an appendix by Vincent
  Sécherre et Shaun Stevens. http://dx.doi.org/10.1112/plms/pdu020.

\bibitem{Morris}
Lawrence Morris, \emph{Tamely ramified intertwining algebras}, Inventiones
  mathematicae \textbf{114} (1993), 1--54,
  http://dx.doi.org/10.1007/BF01232662.

\bibitem{SeSt3}
Vincent Sécherre and Shaun Stevens, \emph{Smooth representations of {$GL_m(D)$}
  {VI}: semisimple types}, Int. Math. Res. Not. IMRN \textbf{13} (2011),
  2994--3039, http://dx.doi.org/10.1093/imrn/rnr122.

\bibitem{SeSt1}
\bysame, \emph{Block decomposition of the category of $\ell$-modular smooth
  representations of {$GL_n(F)$} and its inner forms}, Ann. Scient. Éc. Norm.
  Sup. \textbf{49} (2016), no.~3, 669--709.

\bibitem{Vig2}
Marie-France Vign{\'e}ras, \emph{Repr{\'e}sentations $l$-modulaires d'un groupe
  r{\'e}ductif $p$-adique avec $l\neq p$}, Progress in Mathematics, vol. 137,
  Birkh{\"a}user Boston, 1996.

\bibitem{Vig1}
\bysame, \emph{Induced {$R$}-representations of $p$-adic reductive groups},
  Selecta Math. \textbf{4} (1998), 549--623,
  http://dx.doi.org/10.1007/s000290050040.

\end{thebibliography}

\end{document}